\documentclass[a4paper,12pt,reqno]{amsart}
\usepackage{amsfonts}
\usepackage[colorlinks,linkcolor=blue,citecolor=blue]{hyperref}
\usepackage{latexsym, amssymb, amsmath, amsthm, bbm}
\usepackage[all]{xy}
\usepackage{enumerate}
\usepackage{mathrsfs}
\usepackage{geometry}
\def \k{\Bbbk}
\def \id{\operatorname{id}}

\def \C{\mathcal{C}}
\def \D{\mathcal{D}}

\def \span{\operatorname{span}}

\usepackage{mathtools}

\numberwithin{equation}{section}
\theoremstyle{plain}
 \newtheorem{theorem}{Theorem}[section]

\newtheorem{lemma}[theorem]{Lemma}
\newtheorem{proposition}[theorem]{Proposition}
\newtheorem{corollary}[theorem]{Corollary}
\newtheorem{definition}[theorem]{Definition}
\newtheorem{example}[theorem]{Example}
\newtheorem{remark}[theorem]{Remark}

\begin{document}
\title[Cotensor coquasi-Hopf algebras]{Quasi-quantum groups arising from cotensor coquasi-Hopf algebras}
\thanks{}
\author[J. Yu]{Jing Yu}
\author[X. Zhen]{Xiangjun Zhen}
\address{School of Mathematical Sciences, University of Science and Technology of China, Hefei 230026, China}
\email{yujing46@ustc.edu.cn}
\address{School of Mathematics, Nanjing University, Nanjing 210093, China}
\email{xjzhen@smail.nju.edu.cn}
\thanks{2020 \textit{Mathematics Subject Classification}. 16T05, 16T25, 18M05}
\keywords{Coquasi-Hopf algebras, Cotensor coalgebras, Coquasitriangular, Coribbon}

\begin{abstract}
In this paper, we show that for any coquasi-Hopf algebra $H$ and any $H$-Majid bimodule $M$, the cotensor coalgebra $\operatorname{CoT}_H(M)$ admits a graded coquasi-Hopf algebra structure. We then define the standard filtration of a coquasi-Hopf algebra and show that the associated graded coquasi-Hopf algebra admits an embedding into a cotensor coquasi-Hopf algebra that preserves degrees zero and one. If, in addition, this coquasi-Hopf algebra is coquasitriangular (respectively, coribbon), then the cotensor coquasi-Hopf algebra is also coquasitriangular (respectively, coribbon), and the above embedding is a morphism of coquasitriangular (respectively, coribbon) coquasi-Hopf algebras. We also prove that if a coquasi-antipode $S$ maps the coradical into itself, then $S$ is bijective if and only if its restriction to the coquasi-Hopf subalgebra generated by the coradical is bijective. As a byproduct, we prove that the coquasi-antipode of a coquasi-Hopf algebra with the dual Chevalley property is bijective. Moreover, we establish graded quasi-coquasi-Hopf pairings between tensor quasi-Hopf algebras and cotensor coquasi-Hopf algebras.
\end{abstract}
\date{}
\maketitle

\section{Introduction}
The notions of tensor and cotensor Hopf algebras were introduced by Nichols in his foundational work on bialgebras of type one \cite{Nic78}. Given a Hopf algebra and a Hopf bimodule, these constructions yield a systematic method for constructing graded Hopf algebras and studying their algebraic and coalgebraic structures. In this direction, Rosso \cite{Ros98} realized the positive Borel part of a quantized enveloping algebra as the subalgebra generated by the degree-zero and degree-one components of a suitable cotensor Hopf algebra (namely, the quantum symmetric algebra). Covering quivers \cite{GS98} and Hopf quivers \cite{CR02} were also studied within the framework of tensor and cotensor Hopf algebras. Later, Chen \cite{Che07} established a link between the two structures via a Hopf pairing. Further developments include Fang-Rosso's multi-brace cotensor Hopf algebras \cite{FR17}, realizing quantum groups for symmetrizable Kac-Moody algebras and their highest-weight representations, and Fang-Jian's \cite{FJ15} Hopf algebra structures on cotensor coalgebras from Hopf bimodule algebras, related to quantum quasi-shuffle algebras and Rota-Baxter structures, and encompassing quantum groups and universal Clifford algebras.

Meanwhile, quasi-Hopf algebras were introduced by Drinfeld \cite{Dri90} in connection with the Knizhnik-Zamolodchikov equations of conformal field theory, while their dual counterparts, coquasi-Hopf algebras, were developed by Majid in the context of Tannaka-Krein reconstruction \cite{Maj95}. Following Drinfeld's philosophy of quantum groups \cite{Dri87}, both mutually dual structures can be understood within the framework of quasi-quantum groups. By allowing coassociativity or associativity to be controlled by a (re)associator, these structures extend the Hopf algebra framework and provide algebraic models for tensor categories, conformal field theory, and low-dimensional topology. Recently, Bulacu \cite{Bul20} extended the tensor Hopf algebra construction of Nichols \cite{Nic78} to the quasi-Hopf algebra context. This naturally raises the question of whether an analogous cotensor construction can be developed for coquasi-Hopf algebras.

Moreover, Etingof and Ostrik \cite{EO04} showed that a finite tensor category in which all objects have integer Frobenius-Perron dimensions, called a finite integral tensor category, can be realized from a finite-dimensional quasi-quantum group. Consequently, the classification of such tensor categories reduces to that of the associated quasi-quantum groups. Current classification results for finite-dimensional coquasi-Hopf algebras are mainly restricted to the dual Chevalley case, that is, the case in which the coradical is a coquasi-Hopf subalgebra. Examples include the pointed case \cite{HLYY26}, which extends the classification results for Hopf algebras \cite{AS10}, and the cotriangular case with the dual Chevalley property \cite{EG03,EG21}. In our previous work \cite{Yu26}, we developed a quiver approach to coquasi-Hopf algebras with the dual Chevalley property and studied their corepresentation type.

However, in certain cases, the dual Chevalley property does not hold. For instance, the tensor product structure of the symplectic fermion category shows that the finite-dimensional ribbon quasi-Hopf algebras constructed by Farsad, Gainutdinov, and Runkel \cite{FGR22} fail the Chevalley property; consequently, their duals are coribbon coquasi-Hopf algebras without the dual Chevalley property. From a Hopf-algebraic point of view, Andruskiewitsch and Cuadra \cite{AC13} introduced a new filtration on Hopf algebras, termed the standard filtration, which generalizes the coradical filtration and is designed to investigate more general Hopf algebras. A central idea in their work is to consider the algebra generated by the coradical. This approach was also highlighted in Angiono's report at the 2026 International Congress of Mathematicians \cite{Ang26}, where it was noted that the classification of Hopf algebras without the dual Chevalley property remains an open problem, and that the framework of Andruskiewitsch and Cuadra is expected to play a significant role in future developments. It is therefore natural to ask whether this approach extends to coquasi-Hopf algebras. Moreover, if it does, can cotensor coquasi-Hopf algebras provide an effective framework for studying the resulting graded structures?

In this paper, we first show that for any coquasi-Hopf algebra $H$ and any $H$-Majid bimodule $M$, the cotensor coalgebra $\operatorname{CoT}_H(M)$ admits a graded coquasi-Hopf algebra structure (see Proposition \ref{prop:CotCQH}).
Next, inspired by \cite{AC13}, for a coquasi-Hopf algebra $H$ we define its Majid coradical to be the coquasi-Hopf subalgebra generated by its coradical, denoted by $H_{[0]}$. We then define the standard filtration $\{H_{[n]}\}_{n\geq 0}$ of $H$ by
$$
H_{[n+1]}:=H_{[0]}\wedge H_{[n]}
=\Delta^{-1}(H_{[0]}\otimes H + H\otimes H_{[n]}),\;\; n\geq 0.
$$
We prove that if $S(H_0)\subseteq H_0$, then the associated graded coalgebra
$$
\operatorname{gr}H:=\bigoplus_{n\geq 0}H_{[n]}/H_{[n-1]}
$$
is a graded coquasi-Hopf algebra (see Lemma \ref{lem:gr}). Subsequently, we prove that there exists a graded coquasi-Hopf algebra embedding
\begin{eqnarray}\label{psi}
\psi:\operatorname{gr}H\hookrightarrow \operatorname{CoT}_{H_{[0]}}(H_{[1]}/H_{[0]})
\end{eqnarray}
such that $\psi\mid_{H_{[0]}\oplus H_{[1]}/H_{[0]}}=\operatorname{id}$; see Theorem \ref{thm:embedding}. These results answer the questions raised earlier. As an application, we show that a coquasi-antipode $S$ is bijective if and only if $S|_{H_{[0]}}$ is (see Theorem \ref{thm:H[0]Sbijiective}). Thus, the coquasi-antipode of a (not necessarily finite-dimensional) coquasi-Hopf algebra with the dual Chevalley property is always bijective (see Proposition \ref{prop:Chevalleybijective}). This extends Radford's result \cite[Corollary 3.6]{Rad77} on Hopf algebras with the dual Chevalley property to the coquasi-Hopf algebra setting.

It is well known that the corepresentation category of a coquasitriangular (respectively, coribbon) coquasi-Hopf algebra forms a braided (respectively, ribbon) tensor category. In the finite-dimensional setting, these constructions are dual to those for quasitriangular and ribbon quasi-Hopf algebras. A concrete example of ribbon quasi-Hopf algebras is the twisted quantum double $D^\omega(G)$ of a finite group $G$, constructed by Dijkgraaf, Pasquier and Roche \cite{DPR90}. Note that the representation category of the fixed-point subalgebra $V^G$ of a holomorphic vertex operator algebra $V$ is equivalent to the module category of some twisted Drinfeld double $D^\omega (G)$ \cite{DNR25}. Altsch\"uler and Coste \cite{AC92} constructed invariants of links and three-manifolds from ribbon quasi-Hopf algebras. Creutzig, Gainutdinov, and Runkel \cite{CGR20} gave a ribbon quasi-Hopf algebra $U$, whose underlying algebra is that of the restricted quantum group for $\mathfrak{sl}(2)$ at a $2p$th root of unity, in connection with the triplet VOA $\mathcal{W}(p)$. By taking duals, we obtain examples of coribbon coquasi-Hopf algebras. These connections provide motivation for studying coquasitriangular and coribbon structures on cotensor coquasi-Hopf algebras.
We first study coquasitriangular structures on cotensor coquasi-Hopf algebras (see Propositions \ref{prop:CotHMCQT} and \ref{prop:CQTiff}). We then prove that if $H$ is a coquasitriangular (respectively, coribbon) coquasi-Hopf algebra, then $\operatorname{CoT}_{H_{[0]}}(H_{[1]}/H_{[0]})$ is also coquasitriangular (respectively, coribbon), and the embedding \eqref{psi} is a morphism of coquasitriangular (respectively, coribbon) coquasi-Hopf algebras; see Propositions \ref{prop:CQTembedding} and \ref{prop:coribbonembedding}.

We also give the definition of a quasi-coquasi-Hopf pairing (see Definition \ref{def:QCQpair}), which generalizes that of a Hopf pairing \cite{Kas95}, and establish a connection between quasi-Hopf algebras and coquasi-Hopf algebras. Let $A$ be a quasi-Hopf algebra and $H$ a coquasi-Hopf algebra. Let $M$ be a two-sided two-cosided quasi-Hopf $A$-bimodule and $N$ an $H$-Majid bimodule. We establish a connection between the tensor quasi-Hopf algebra $T_A(M)$ constructed by Bulacu \cite{Bul20} and the cotensor coquasi-Hopf algebra $\operatorname{CoT}_H(N)$ via a quasi-coquasi-Hopf pairing. Specifically, we show that a graded quasi-coquasi-Hopf pairing
$$
\psi:T_A(M)\times \operatorname{CoT}_H(N)\rightarrow \k
$$
exists if and only if there exists a pairing $(\psi_0,\psi_1)$ between $(A,M)$ and $(H,N)$; see Theorem \ref{thm:pair}. This result generalizes that of Chen \cite{Che07}.

The organization of this paper is as follows: In Section \ref{section2}, we recall the definitions of coquasi-Hopf algebras, Majid bimodules, Yetter-Drinfeld modules, and cotensor coalgebras, along with some of their basic properties. We prove that, for a coquasi-Hopf algebra $H$ and a Majid bimodule $M$, the cotensor coalgebra $\operatorname{CoT}_H(M)$ can be endowed with a graded coquasi-Hopf algebra structure. In Section \ref{section3}, we first give the definitions of the Majid coradical and the standard filtration. We then construct a graded coquasi-Hopf algebra associated with this filtration and show that it admits an embedding into a cotensor coquasi-Hopf algebra that preserves degrees zero and one. As an application, we investigate the bijectivity of the coquasi-antipode. In Section \ref{section4}, we show that if $H$ is a coquasitriangular (respectively, coribbon) coquasi-Hopf algebra, then $\operatorname{CoT}_{H_{[0]}}(H_{[1]}/H_{[0]})$ is also coquasitriangular (respectively, coribbon), and the embedding is a morphism of coquasitriangular (respectively, coribbon) coquasi-Hopf algebras. Section \ref{Section5} is devoted to establishing quasi-coquasi-Hopf pairings between tensor quasi-Hopf algebras and cotensor coquasi-Hopf algebras.

\section{On cotensor coquasi-Hopf algebras}\label{section2}
We work over a field $\k$, and all vector spaces are over $\k$. The tensor product over $\k$ will be denoted simply by $\otimes$. We refer the reader to \cite{EGNO15} for tensor categories and to \cite{BCPV19} for coquasi-Hopf algebras.
\subsection{Coquasi-Hopf algebras}
Coquasi-Hopf algebras (see \cite[Section 2.4]{Maj95}), also referred to as Majid algebras or dual quasi-Hopf algebras, constitute the dual notion of Drinfeld's quasi-Hopf algebras \cite{Dri90}.

\begin{definition}
A coquasi-bialgebra is a coalgebra $(H,\Delta,\varepsilon)$ over $\k$ endowed with a compatible quasi-algebra structure. Concretely, this amounts to the data of two coalgebra homomorphisms
$$
m: H \otimes H \rightarrow H, \;\; a \otimes b \mapsto ab,
\;\;\text{and}\;\;
\mu: \k \rightarrow H, \;\; \lambda \mapsto \lambda 1_H,
$$
along with a convolution-invertible reassociator $\Phi: H^{\otimes 3} \rightarrow \k$, subject to the following conditions for arbitrary $a,b,c,d \in H$:
\begin{equation}
a_1(b_1c_1)\Phi(a_2,b_2,c_2)=\Phi(a_1,b_1,c_1)(a_2b_2)c_2, \label{eq:associativity}
\end{equation}
\begin{equation}
1_H a = a = a 1_H, \label{eq:1h=h1=h}
\end{equation}
\begin{equation}
\Phi(a_1,b_1,c_1d_1)\Phi(a_2b_2,c_2,d_2)
= \Phi(b_1,c_1,d_1)\Phi(a_1,b_2c_2,d_2)\Phi(a_2,b_3,c_3), \label{eq:penta}
\end{equation}
\begin{equation}
\Phi(a,1_H,b)=\varepsilon(a)\varepsilon(b). \label{eq:Phi(1)=e}
\end{equation}
The coquasi-bialgebra $H$ is termed a coquasi-Hopf algebra whenever it possesses a coquasi-antipode: this consists of a coalgebra antimorphism $S : H \to H$ together with two maps $\alpha, \beta : H \rightarrow \Bbbk$ satisfying, for every $a \in H$,
\begin{equation}
S(a_1)\alpha(a_2)a_3 = \alpha(a)1_H, \;\;
a_1\beta(a_2)S(a_3) = \beta(a)1_H, \label{eq:alpha,beta}
\end{equation}
and additionally
\begin{equation}
\Phi(a_1,S(a_3),a_5)\beta(a_2)\alpha(a_4)
= \Phi^{-1}(S(a_1),a_3,S(a_5)) \alpha(a_2)\beta(a_4)
= \varepsilon(a). \label{eq:Phialphabeta=e}
\end{equation}
Throughout, we employ the shorthand $\Phi(a,b,c):=\Phi(a\otimes b\otimes c)$ and adopt Sweedler's notation $\Delta(a)=a_1 \otimes a_2$ for the coproduct; the $n$-fold iterated coproduct of $a$ is written as $a_1 \otimes a_2 \otimes \cdots \otimes a_{n+1}$. Finally, the coquasi-antipode $(S,\alpha,\beta)$ is called bijective when the underlying map $S$ is bijective.
\end{definition}
Note that identities (\ref{eq:penta}) and (\ref{eq:Phi(1)=e}) force the normalizations
\begin{equation}\label{eq:Phi(1,a,b)=e(a)e(b)}
\Phi(1_H,a,b)=\Phi(a,b,1_H)=\varepsilon(a)\varepsilon(b),\;\; \forall\; a,b\in H.
\end{equation}
From (\ref{eq:alpha,beta}) and (\ref{eq:Phialphabeta=e}) one further deduces that $S(1_H)=1_H$ and $\alpha(1_H)\beta(1_H)=1$.
It should be emphasized that the coquasi-antipode is determined only up to a convolution-invertible element $U\in H^*$.
Moreover, it is a standard fact that the antipode of a Hopf algebra is an algebra antimorphism. Analogously, for a coquasi-Hopf algebra $H$, its coquasi-antipode behaves as an algebra antimorphism up to twist. Specifically, for any $ h, g \in H $, set
\begin{equation}\label{eq:gamma}
\gamma(h,g) := \Phi(S(g_2), S(h_2), h_4) \alpha(h_3) \Phi^{-1}(S(g_1)S(h_1), h_5, g_4) \alpha(g_3),
\end{equation}
and then define
\begin{equation}\label{eq:f}
f(h,g) := \Phi^{-1}(S(g_1)S(h_1), h_3g_3, S(h_5g_5)) \beta(h_4g_4) \gamma(h_2, g_2).
\end{equation}
Then $f$ is convolution-invertible, and one has
\begin{equation}\label{eq:S(hg)}
f(h_1, g_1)S(h_2g_2)f^{-1}(h_3, g_3) = S(g)S(h).
\end{equation}
For details, see \cite[Proposition 3.49]{BCPV19}.

\begin{example}\rm\label{ex:group}
Given a group $G$, its group algebra $\k G$ carries a natural Hopf algebra structure, with comultiplication, counit, and antipode given by
$$
\Delta(g)=g\otimes g,\;\; \varepsilon(g)=1,\;\; S(g)=g^{-1}
$$
for all $g\in G$. Let $\omega$ be a normalized $3$-cocycle on $G$, i.e.,
$$
\omega(ef,g,h)\omega(e,f,gh)=\omega(e,f,g)\omega(e,fg,h)\omega(f,g,h),\;\; \omega(f,1,g)=1
$$
for all $e,f,g,h\in G$. Extending linearly, $\omega:(\k G)^{\otimes 3}\to \k$ becomes convolution-invertible. Define linear maps $\alpha,\beta:\k G\to \k$ by
$$
\alpha(g):=\varepsilon(g),\;\; \beta(g):=\frac{1}{\omega(g,g^{-1},g)}
$$
for all $g\in G$. Then $(\k G,\omega, S, \alpha,\beta)$ is a cosemisimple coquasi-Hopf algebra, denoted simply by $(\k G,\omega)$ (see, for example, \cite[Example 2.2]{HLYY20}). Its category of comodules is precisely the Gr-category $\operatorname{Vec}^{\omega}_G$ (see \cite[Example 2.3.8]{EGNO15}).
\end{example}

\subsection{Majid bimodules and Yetter-Drinfeld modules}
In this subsection, let $H$ be a coquasi-bialgebra over $\k$ with reassociator $\Phi$. We first give the definitions of Majid bimodules and Yetter-Drinfeld modules.
\begin{definition}\emph{(}\cite[Definition 3.2]{Hua09}\emph{)}
A linear space $M$ is called an $H$-Majid bimodule if $M$ is an $H$-bicomodule with structure maps $(\rho_M^{L},\rho_M^{R})$, and there exist two $H$-bicomodule morphisms
$$
p_L: H \otimes M \rightarrow M, \;\; h \otimes m \mapsto h\cdot m,
\;\;
p_R: M \otimes H \rightarrow M, \;\; m \otimes h \mapsto m\cdot h
$$
such that for all $g, h \in H$ and $m \in M$, the following identities hold:
\begin{equation*}
1_H\cdot m=m=m\cdot 1_H,
\end{equation*}
\begin{equation*}
g_1\cdot(h_1\cdot m_0)\Phi(g_2,h_2,m_1)=\Phi(g_1,h_1,m_{-1})(g_2h_2)\cdot m_0,
\end{equation*}
\begin{equation*}
m_0\cdot (g_1h_1)\Phi(m_1,g_2,h_2)=\Phi(m_{-1},g_1,h_1)(m_0\cdot g_2)\cdot h_2,
\end{equation*}
\begin{equation*}
g_1\cdot (m_0\cdot h_1)\Phi(g_2,m_1,h_2)=\Phi(g_1,m_{-1},h_1)(g_2\cdot m_0)\cdot h_2.
\end{equation*}
\end{definition}

\begin{definition}\emph{(}\cite[Definition 3.1]{Bal09}\emph{)} \label{def:YDmod}
A left-left Yetter-Drinfeld module over $H$ is a triple $( V,\rho^L_V ,\vartriangleright )$, where
\begin{itemize}
\item $(V,\rho^L_V)$ is a left $H$-comodule;
\item $\vartriangleright : H\otimes V \rightarrow V$ is a $\k$-linear map satisfying, for all $h,l \in H$ and $v \in V$, the following conditions:
\begin{equation*}
(hl)\vartriangleright v
= \left[ \begin{array}{l}
\Phi^{-1}(h_1,l_1,v_{-1})\Phi(h_2,(l_2\vartriangleright v_0)_{-1},l_3)\\
   \Phi^{-1}((h_3\vartriangleright(l_2\vartriangleright v_0)_0)_{-1},h_4,l_4)\bigl(h_3\vartriangleright(l_2\vartriangleright v_0)_0\bigr)_0
   \end{array} \right],
\end{equation*}
\begin{equation*}
1_H\vartriangleright v = v,
\end{equation*}
\begin{equation*}
(h_1\vartriangleright v)_{-1}h_2 \otimes (h_1\vartriangleright v)_0
= h_1 v_{-1} \otimes (h_2\vartriangleright v_0).
\end{equation*}
\end{itemize}
\end{definition}
Let $H$ be a coquasi-bialgebra over $\k$. By \cite[Theorem 1.12]{AMS07}, the category $({}_H^{H}\mathscr{M}^{H}_H, \otimes_H, H)$ of $H$-Majid bimodules is a monoidal category.
Ardizzoni and Pavarin \cite{AP13} established a connection between the category ${}_H^{H}\mathscr{M}^{H}_H$ of $H$-Majid bimodules and the category ${}_H^H\mathcal{YD}$ of Yetter-Drinfeld modules over $H$. They proved that if $H$ admits a preantipode (in the sense of \cite[Definition 3.6]{AP12}), then the category $({}_H^{H}\mathscr{M}^{H}_H, \otimes_H, H)$ of $H$-Majid bimodules is monoidally equivalent to the category $({}^H_H\mathcal{YD}, \otimes,\k)$ of Yetter-Drinfeld modules over $H$ (see \cite[Proposition 3.8 and Lemma 4.4]{AP13}).
Denoting by $\Box$ the cotensor product introduced in \cite{Tak77}, the category $({}_H^{H}\mathscr{M}^{H}_H, \otimes_H, H)$ is monoidally equivalent to the category $({}_H^{H}\mathscr{M}^{H}_H, \Box, H)$ by \cite[Corollary 4.12]{AP13}, and the equivalence is given by the identity functor.
Since every coquasi-Hopf algebra admits a preantipode (\cite[Theorem 3.10]{AP12}), the following lemma holds.
\begin{lemma}\label{lem:Majidmod=YD}\emph{(}\cite[Proposition 2.23]{Yu26}\emph{)}
Let $H$ be a coquasi-Hopf algebra over $\k$. Then the functors
$$
G:\; ({}_H^{H}\mathscr{M}^{H}_H, \Box, H) \rightarrow  ({}^H_H\mathcal{YD}, \otimes,\k),\;\;
M\mapsto M^{coH}
$$
and
$$
F:({}^H_H\mathcal{YD}, \otimes,\k)\rightarrow({}_H^{H}\mathscr{M}^{H}_H, \Box, H),\;\;
N\mapsto N\otimes H
$$
form a pair of mutually inverse monoidal equivalences, where $M^{co H}= \{m\in M\mid \rho_M^R(m)=m\otimes 1_H\}$. More explicitly, $M^{coH}\in {}^H_H\mathcal{YD}$ and $N\otimes H\in {}_H^{H}\mathscr{M}^{H}_H$ are equipped with the following structures: for all $m\in M^{coH}$, $n\in N$, and $h,l\in H$,
\begin{equation*}
\rho^L _{M^{coH}}(m) := \rho^L_M(m),
\end{equation*}
\begin{equation*}
h\vartriangleright m:=\beta(h_3)\alpha(h_6)
\Phi(h_1m_{-1},S(h_5),h_7)
(h_2m_0)S(h_4),
\end{equation*}
\begin{equation*}
l\cdot (n\otimes h) := \Phi(l_1, n_{-1}, h_1)\Phi^{-1}((l_2\vartriangleright n_0)_{-1}, l_3, h_2)(l_2\vartriangleright n_0)_0 \otimes l_4 h_3,
\end{equation*}
\begin{equation*}
(n\otimes h)\cdot l := \Phi^{-1}(n_{-1}, h_1, l_1) n_0 \otimes h_2 l_2,
\end{equation*}
\begin{equation*}
\rho_{N\otimes H}^{L}(n\otimes h) := n_{-1} h_1 \otimes (n_0 \otimes h_2),
\end{equation*}
\begin{equation*}
\rho_{N\otimes H}^{R}(n\otimes h) := (n\otimes h_1) \otimes h_2.
\end{equation*}
\end{lemma}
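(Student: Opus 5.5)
The lemma is cited from \cite[Proposition 2.23]{Yu26}, so the plan is to assemble it from the equivalences recalled just before the statement and then make the structure maps explicit. First, by \cite[Theorem 3.10]{AP12} every coquasi-Hopf algebra $H$ has a preantipode. We take it concretely as $\tau(h)=\beta(h_1)S(h_2)\alpha(h_3)$, the standard choice built from $(S,\alpha,\beta)$. With this in hand, \cite[Proposition 3.8 and Lemma 4.4]{AP13} give a monoidal equivalence between $({}_H^{H}\mathscr{M}^{H}_H,\otimes_H,H)$ and $({}^H_H\mathcal{YD},\otimes,\k)$. On objects it is $M\mapsto M^{coH}$, with quasi-inverse $N\mapsto N\otimes H$. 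Next, \cite[Corollary 4.12]{AP13} shows that the identity functor is a monoidal equivalence $({}_H^{H}\mathscr{M}^{H}_H,\otimes_H,H)\simeq({}_H^{H}\mathscr{M}^{H}_H,\Box,H)$. Composing the two equivalences yields mutually inverse monoidal equivalences $G$ and $F$ between $({}_H^{H}\mathscr{M}^{H}_H,\Box,H)$ and $({}^H_H\mathcal{YD},\otimes,\k)$.

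Second, we identify the structures. For $m\in M^{coH}$ the left coaction of $M$ restricts to $M^{coH}$, because $\rho^L_M$ and $\rho^R_M$ commute. The Ardizzoni--Pavarin action on coinvariants is $h\vartriangleright m=\tau$-twisted conjugation, $\Phi(\cdots)(h_1m_0)\tau(h_2)$ with suitable reassociator factors. Substituting $\tau$ and expanding the coproducts should give exactly the displayed formula
$$\beta(h_3)\alpha(h_6)\Phi(h_1m_{-1},S(h_5),h_7)(h_2m_0)S(h_4).$$
For $F(N)=N\otimes H$, the right coaction $n\otimes h_1\otimes h_2$ and the left coaction $n_{-1}h_1\otimes n_0\otimes h_2$ are the usual tensor-product comodule structures. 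The right action $(n\otimes h)\cdot l=\Phi^{-1}(n_{-1},h_1,l_1)n_0\otimes h_2l_2$ is the free right module structure on $N\otimes H$, twisted by the associator. The left action is obtained by passing $l$ through $n$ via $\vartriangleright$ and inserting the two reassociator factors displayed in the statement. At this stage we compare these formulas with those of \cite{AP13}, translating notation (their conventions for $\Phi$ and sides).

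Third, we would check directly that $G(F(N))\cong N$, via $N\to(N\otimes H)^{coH}$, $n\mapsto n\otimes 1_H$. This uses the normalizations (\ref{eq:Phi(1,a,b)=e(a)e(b)}) and the identities $S(1)=1$ and $\alpha(1)\beta(1)=1$. We also check that $F(G(M))\cong M$, via $m\otimes h\mapsto m\cdot h$, whose inverse is built from the preantipode in the manner of the fundamental theorem of Hopf modules.

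The main obstacle is the bookkeeping in the second step. Verifying that the explicit left action on $N\otimes H$ makes it a Majid bimodule, and that $\vartriangleright$ satisfies the Yetter--Drinfeld axioms of Definition~\ref{def:YDmod}, requires repeated use of the pentagon (\ref{eq:penta}) and the antipode axioms (\ref{eq:alpha,beta}) and (\ref{eq:Phialphabeta=e}). We would avoid redoing these computations and instead rely on \cite{AP13} and \cite{Yu26}, checking only that the formulas agree after substituting the preantipode.
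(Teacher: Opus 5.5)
Your proposal is correct and follows the same route as the paper. The paper justifies the lemma by combining the existence of a preantipode \cite[Theorem 3.10]{AP12}, the monoidal equivalence $({}_H^{H}\mathscr{M}^{H}_H,\otimes_H,H)\simeq({}^H_H\mathcal{YD},\otimes,\k)$ of \cite[Proposition 3.8 and Lemma 4.4]{AP13}, and the identity-functor equivalence between $\otimes_H$ and $\Box$ of \cite[Corollary 4.12]{AP13}, and it takes the explicit formulas from \cite{Yu26}. One precision: your sketched form $\Phi(\cdots)(h_1m_0)\tau(h_2)$ is slightly off, and the action on coinvariants is $h\vartriangleright m=\Phi(h_1m_{-1},\tau(h_3)_1,h_4)(h_2m_0)\tau(h_3)_2$ with $\tau=\beta*S*\alpha$; substituting $\Delta(\tau(h))=\beta(h_1)S(h_3)\otimes S(h_2)\alpha(h_4)$ then gives exactly the displayed formula.
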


\subsection{Cotensor coalgebras}
Given a coalgebra $H$ and an $H$-bicomodule $M$ with structure maps $\rho^L_M$ and $\rho^R_M$, Nichols defines the \textit{cotensor coalgebra} (see \cite[p. 1526]{Nic78})
\begin{equation}\label{eq:Cotgrading}
\operatorname{CoT}_H(M):=H\oplus M\oplus M^{\Box 2}\oplus \cdots \oplus M^{\Box n}\oplus \cdots,
\end{equation}
where $M^{\Box 0}:=H$, $M^{\Box 1}:=M$, and for each $n\geq 2$, $M^{\Box n}$ is the kernel of the $\k$-linear map
$$
M^{\otimes n}\xrightarrow{f}\bigoplus\limits_{i=1}^{n-1}M^{\otimes i}\otimes H \otimes M^{\otimes n-i},
$$
with $f=(f_1, f_2, \cdots , f_{n-1})$ and $f_i=\id^{\otimes i-1}\otimes \rho^R_M\otimes \id^{\otimes n-i}-\id^{\otimes i}\otimes \rho_M^L\otimes \id^{\otimes n-i-1}$ for $i=1, 2, \cdots, n-1.$
The coalgebra structure on $\operatorname{CoT}_H(M)$ is given by
\begin{equation*}
\varepsilon \mid_H=\varepsilon_H,\;\;\varepsilon \mid_{M^{\Box i}}=0\;\; (i\geq 1),\;\;
\Delta\mid_H=\Delta_H,\;\;\Delta\mid_M=\rho^L_M+\rho^R_M,
\end{equation*}
and for $\sum m^1\otimes m^2\otimes \cdots \otimes m^n\in M^{\Box n}$, $n\geq 2,$
\begin{equation*}
\begin{aligned}
\Delta\left(\sum m^1\otimes m^2\otimes \cdots \otimes m^n\right)
:=&  \sum \rho_M^L(m^1)\otimes  m^2\otimes \cdots \otimes m^n  \\& + \sum m^1\otimes (m^2\otimes \cdots \otimes m^n)+\cdots \\
&  + \sum(m^1\otimes m^2\otimes \cdots \otimes m^{n-1})\otimes m^n \\&+  \sum m^1\otimes m^2\otimes \cdots \otimes m^{n-1}\otimes \rho_M^R(m^n) \\
\in&  \bigoplus\limits_{i=0}^{n}M^{\Box i} \otimes M^{\Box (n-i)}.
\end{aligned}
\end{equation*}

The following universal property holds.

\begin{lemma}\emph{(}\cite[Lemma 1.2]{VZ04}\emph{)} \label{lemma:universal}
Let $H$ be a coalgebra, $M$ an $H$-bicomodule, and $\psi:X\rightarrow \operatorname{CoT}_H(M)$ a coalgebra map. For each $n\geq 0$, denote by $\psi_n:=p_n\psi:X\rightarrow M^{\Box n}$ the projection onto the $n$-th component. Then $\psi_0:X\rightarrow H$ is a coalgebra map, and $\psi_1:X\rightarrow M$ is an $H$-bicomodule map, where the $H$-bicomodule structure on $X$ is induced via $\psi_0$. For $n\geq2$, $\psi_n$ coincides with the $H$-bicomodule map given by the composition
$$
X\xrightarrow{\Delta^{n-1}} X^{\otimes n}\xrightarrow{\psi_1^{\otimes n}} M^{\otimes n}.
$$
Consequently, $\psi$ is uniquely determined by $\psi_0$ and $\psi_1$.
Conversely, suppose $\psi_0:X\rightarrow H$ is a coalgebra map and $\psi_1:X\rightarrow M$ is an $H$-bicomodule map. Define $\psi_n:X\rightarrow M^{\otimes n}$ by the above composition. Then each $\psi_n$ is an $H$-bicomodule map with $\operatorname{Im}(\psi_n)\subseteq M^{\Box n}$. If for every $x\in X$ only finitely many $\psi_i(x)$ are nonzero, then
$\psi:=\sum_{i\geq0}\psi_i:X\rightarrow \operatorname{CoT}_H(M)$ is a coalgebra map.
\end{lemma}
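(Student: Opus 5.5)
The statement is the universal property of the cotensor coalgebra, attributed to \cite[Lemma 1.2]{VZ04}. I would prove it directly by comparing homogeneous components of the coalgebra identity $\Delta\psi=(\psi\otimes\psi)\Delta$.

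\emph{Direct implication.} Since $\varepsilon$ vanishes on $M^{\Box i}$ for $i\geq 1$, we have $\varepsilon_H\psi_0=\varepsilon\psi=\varepsilon_X$. Projecting $\Delta\psi=(\psi\otimes\psi)\Delta$ onto $H\otimes H$ uses only $\Delta|_H=\Delta_H$, so it gives $\Delta_H\psi_0=(\psi_0\otimes\psi_0)\Delta_X$. Thus $\psi_0$ is a coalgebra map, and $X$ becomes an $H$-bicomodule via $(\psi_0\otimes\id)\Delta$ and $(\id\otimes\psi_0)\Delta$.

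Next I project onto $H\otimes M$ and $M\otimes H$. From the definition of $\Delta$ on $\operatorname{CoT}_H(M)$, the only graded piece of $\Delta$ landing in $H\otimes M$ is $\rho^L_M$ restricted to $M$. Hence $\rho^L_M\psi_1=(\psi_0\otimes\psi_1)\Delta$, which is left colinearity; right colinearity follows symmetrically.

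For $n\geq 2$, the component of $\Delta$ from $M^{\Box n}$ into $M\otimes M^{\Box(n-1)}$ is the identity splitting $m^1\otimes(m^2\otimes\cdots\otimes m^n)$. Projecting onto $M\otimes M^{\Box(n-1)}$ therefore gives $\psi_n=(\psi_1\otimes\psi_{n-1})\Delta$, and induction yields $\psi_n=\psi_1^{\otimes n}\Delta^{n-1}$. Colinearity of $\psi_n$ follows from that of $\psi_1$ together with coassociativity. Uniqueness is then immediate.

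\emph{Converse.} Each $\psi_n=\psi_1^{\otimes n}\Delta^{n-1}$ is a bicomodule map, because the outer coactions come from the outer tensor factors. It lands in $M^{\Box n}$: applying $f_i$ and using colinearity of $\psi_1$, both $\id^{\otimes i-1}\otimes\rho^R\otimes\id$ and $\id^{\otimes i}\otimes\rho^L\otimes\id$ applied to $\psi_1^{\otimes n}\Delta^{n-1}(x)$ equal
\[
\psi_1^{\otimes i}\otimes\psi_0\otimes\psi_1^{\otimes(n-i)}\,\Delta^{n}(x),
\]
so $f_i$ vanishes on it. The finiteness hypothesis makes $\psi=\sum_i\psi_i$ well defined.

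Checking counitality is trivial. For comultiplicativity, compare the $M^{\Box i}\otimes M^{\Box(n-i)}$ components of $\Delta\psi_n(x)$ and $\sum_{j,k}(\psi_j\otimes\psi_k)\Delta(x)$. The right side contributes only $(\psi_i\otimes\psi_{n-i})\Delta(x)$. For $0<i<n$, coassociativity turns this into $\psi_1^{\otimes n}\Delta^{n-1}(x)$ split after position $i$, which is exactly the corresponding term of $\Delta\psi_n(x)$. For $i=0$, the left side is $\rho^L(\psi_1(x_1))\otimes\psi_1(x_2)\otimes\cdots$, which equals $\psi_0(x_1)\otimes\psi_n(x_2)$ by left colinearity of $\psi_1$. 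The case $i=n$ is symmetric.

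\emph{Main obstacle.} The step needing care is the converse comultiplicativity, since it is an identity in an infinite direct sum. One must verify that a single $x$ produces only finitely many nonzero terms on both sides. This follows because $\psi_i(x_1)\otimes\psi_j(x_2)$ vanishes unless $i+j=n$ with $\psi_n(x)$ possibly nonzero. Indeed, $(\psi_i\otimes\psi_j)\Delta(x)$ is the $(i,j)$-splitting of $\psi_{i+j}(x)$, which is zero when $\psi_{i+j}(x)=0$.
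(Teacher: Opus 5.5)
Your proof is correct. It is the standard componentwise verification: you match the $M^{\Box i}\otimes M^{\Box j}$ components of $\Delta\psi$ and $(\psi\otimes\psi)\Delta$, using colinearity of $\psi_1$ for $i=0$ and $i=n$ and coassociativity for $0<i<n$. The paper gives no proof of its own and simply cites \cite[Lemma 1.2]{VZ04}.

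The finiteness point you flag as the main obstacle is automatic: $\Delta(x)$ is a finite sum of tensors, and by hypothesis $\psi$ of each tensor factor has only finitely many nonzero components.
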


\subsection{Coquasi-Hopf algebra structures on cotensor coalgebras}\label{subsection3.1}
Nichols \cite{Nic78} proved that, for a Hopf algebra $H$ and a Hopf bimodule $M$, the cotensor coalgebra $\operatorname{CoT}_H(M)$ can be endowed with a Hopf algebra structure, using the universal property stated in Lemma \ref{lemma:universal}. In this subsection, we extend these results to coquasi-Hopf algebras.

First, starting from a coquasi-bialgebra $H$ and an $H$-Majid bimodule $M$, we can prove that $\operatorname{CoT}_H(M)$ admits a graded coquasi-bialgebra structure.
\begin{lemma}\label{lem:cotensor=coquasibialgebra}
Let $(H, m, \mu, \Delta, \varepsilon, \Phi)$ be a coquasi-bialgebra over $\k$, and let $M$ be an $H$-Majid bimodule. Then $\operatorname{CoT}_H(M)$ admits a coquasi-bialgebra structure with respect to the grading (\ref{eq:Cotgrading}).
\end{lemma}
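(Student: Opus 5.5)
We follow Nichols' strategy and build the multiplication from the universal property (Lemma \ref{lemma:universal}); the reassociator is obtained by pulling back $\Phi$ along the projection onto degree zero. Write $C:=\operatorname{CoT}_H(M)$ and $\pi_0:C\to H$ for the projection onto $M^{\Box 0}=H$, which is a coalgebra map.

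\emph{Multiplication.} The coalgebra $X=C\otimes C$ carries the coalgebra map
\[
\psi_0:=m\circ(\pi_0\otimes\pi_0):C\otimes C\to H .
\]
Define $\psi_1:C\otimes C\to M$ to be $p_L$ on $H\otimes M$, $p_R$ on $M\otimes H$, and zero on all other components. Since $p_L$ and $p_R$ are $H$-bicomodule maps, $\psi_1$ is an $H$-bicomodule map for the structure induced by $\psi_0$. Lemma \ref{lemma:universal} then yields a coalgebra map $m_C:C\otimes C\to C$. The map is well defined because for $x\in M^{\Box i}\otimes M^{\Box j}$ the term $\psi_n(x)$ vanishes unless $n=i+j$: every summand of $\Delta^{n-1}(x)$ on which $\psi_1^{\otimes n}$ is nonzero must distribute the degree as one factor per slot. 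The same remark shows that $m_C$ is graded. The unit is $1_H\in H\subseteq C$, and $\mu$ is a coalgebra map. Unitality holds by the uniqueness part of Lemma \ref{lemma:universal}: the maps $m_C(1\otimes -)$ and $\id_C$ are coalgebra maps $C\to C$ with the same components $\pi_0$ and $\pi_1$, because $1_H\cdot m=m=m\cdot 1_H$.

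\emph{Reassociator.} Set $\Phi_C:=\Phi\circ\pi_0^{\otimes 3}$. Its convolution inverse is $\Phi^{-1}\circ\pi_0^{\otimes 3}$, since $\pi_0$ is a coalgebra map. The normalization \eqref{eq:Phi(1)=e} is immediate. The pentagon \eqref{eq:penta} follows from the pentagon for $\Phi$, because $\pi_0$ is multiplicative: $\pi_0\circ m_C=m\circ(\pi_0\otimes\pi_0)$, as $\psi_0$ is the degree-zero component.

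\emph{Quasi-associativity \eqref{eq:associativity}.} This is the main obstacle. Both sides of \eqref{eq:associativity} are linear maps $C^{\otimes 3}\to C$. They have the form
\[
L=m_C(\id\otimes m_C)*\Phi_C,\qquad R=\Phi_C*m_C(m_C\otimes\id),
\]
that is, convolution products of a coalgebra map with $\Phi_C$ or with its inverse. Such a map is not a coalgebra map, so the uniqueness part of Lemma \ref{lemma:universal} does not apply directly.

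First try: use convolution invertibility to rewrite the identity as
\[
m_C(\id\otimes m_C)=\Phi_C*m_C(m_C\otimes\id)*\Phi_C^{-1}.
\]
The left side is a coalgebra map. Then check directly that the right side $\theta$ is also a coalgebra map $C^{\otimes3}\to C$. Comultiplicativity of $\theta$ should reduce, via a telescoping $\Phi_C^{-1}*\Phi_C=\varepsilon$ in the middle, to the fact that $m_C(m_C\otimes\id)$ is comultiplicative. Once both are coalgebra maps, it suffices to compare their components in degrees $0$ and $1$.
\begin{itemize}
\item In degree $0$, the comparison is exactly \eqref{eq:associativity} for $H$.
\item In degree $1$, restrict to the summands $H\otimes H\otimes M$, $H\otimes M\otimes H$ and $M\otimes H\otimes H$. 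There the comparison becomes precisely the three Majid bimodule axioms, with $\Phi(g,h,m_1)$ and the like read through $\pi_0$ applied to the bicomodule coactions.
\item On all other degree-one summands, both sides vanish.
\end{itemize}
The care needed is in tracking which Sweedler legs land in $H$ when computing $\pi_1\theta$: only the terms where the outer $\Phi_C$ factors see degree-zero legs survive, and these legs come from $\rho^L_M$ and $\rho^R_M$. If proving that $\theta$ is a coalgebra map turns out awkward, the fallback is an induction on degree. One would verify \eqref{eq:associativity} on $M^{\Box i}\otimes M^{\Box j}\otimes M^{\Box k}$ by using the injectivity of $\Delta^{n-1}$ followed by projection to $M^{\otimes n}$ (the cofreeness of $C$) and comparing the images of both sides slotwise.

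Gradedness of the whole structure is then clear: $m_C$ is graded, $1\in C_0$, and $\Phi_C$ vanishes outside degree $0$.
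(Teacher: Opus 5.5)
Your proposal is correct and follows the paper's proof: it uses the same multiplication, built via Lemma~\ref{lemma:universal} from $m\circ(\pi_0\otimes\pi_0)$ and $p_L\oplus p_R$, and the same reassociator $\Phi\circ\pi_0^{\otimes 3}$. The paper only asserts that the universal property yields the coquasi-bialgebra axioms. You additionally justify quasi-associativity, the one nontrivial axiom, by noting that $\Phi_C * m_C(m_C\otimes\operatorname{id}) * \Phi_C^{-1}$ is again a coalgebra map, so uniqueness reduces the check to degree $0$ (quasi-associativity in $H$) and degree $1$ (the three Majid bimodule axioms); this is a valid filling-in of the step the paper leaves implicit.
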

\begin{proof}
Let $m^\prime_0$ denote the composition
$$
\operatorname{CoT}_H(M)\otimes \operatorname{CoT}_H(M)
\xrightarrow{\pi_0\otimes \pi_0}
H\otimes H
\xrightarrow{m}
H,
$$
and let $m^\prime_1$ be given by the composition
$$
\operatorname{CoT}_H(M)\otimes \operatorname{CoT}_H(M)
\xrightarrow{\pi_0\otimes \pi_1\oplus \pi_1\otimes \pi_0}
(H\otimes M)\oplus (M\otimes H)
\xrightarrow{p_L\oplus p_R}
M,
$$
where $\pi_0$ and $\pi_1$ denote the canonical projections. For each $n\geq 2$, we define $m^\prime_n$ as the composition
$$
\operatorname{CoT}_H(M)\otimes \operatorname{CoT}_H(M)
\xrightarrow{\Delta^{n-1}}
\bigl(\operatorname{CoT}_H(M)\otimes \operatorname{CoT}_H(M)\bigr)^{\otimes n}
\xrightarrow{(m^\prime_1)^{\otimes n}}
M^{\otimes n}.
$$
We then set $m^\prime:=\sum_{n\geq 0} m_n^\prime$. In addition, we define $\Phi^\prime: \bigl(\operatorname{CoT}_H(M)\bigr)^{\otimes 3}\rightarrow \k$ by
$$
\Phi^\prime(a\otimes b\otimes c):=
\begin{cases}
\Phi(a,b,c), & \text{if } a,b,c \in H,\\
0, & \text{otherwise}.
\end{cases}
$$
By Lemma \ref{lemma:universal}, the coalgebra $\operatorname{CoT}_H(M)$, together with the unit $1_H$, the multiplication $m^\prime$ just defined, and the reassociator $\Phi^\prime$, admits the structure of a coquasi-bialgebra.
\end{proof}

Let $H$ be a graded coquasi-bialgebra. Its reassociator $\Phi$ is said to be \textit{concentrated in degree $0$} if $\Phi(x,y,z)=0$ for all homogeneous elements $x,y,z$ unless $\deg(x)=\deg(y)=\deg(z)=0$.

\begin{lemma}\label{lem:gradedMajid}
Let $H = \bigoplus_{n\geq 0} H(n)$ be a graded coquasi-bialgebra whose reassociator is concentrated in degree $0$. Then $H(1)$ admits an $H(0)$-Majid bimodule structure.
\end{lemma}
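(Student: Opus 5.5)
The plan is to read off every piece of structure from the grading and then obtain each Majid bimodule axiom by restricting an axiom of $H$ to the appropriate homogeneous components.

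\textbf{Step 1: $H(0)$ is a coquasi-bialgebra.} A graded coquasi-bialgebra has $\Delta(H(n))\subseteq\bigoplus_{i+j=n}H(i)\otimes H(j)$ and $H(i)H(j)\subseteq H(i+j)$. We also have $\varepsilon(H(n))=0$ for $n\ge1$; this follows from the counit axiom and the grading of $\Delta$. We may take $1_H\in H(0)$, since the unit map is a coalgebra map and $1_H$ is grouplike; failing that, this is part of what ``graded'' means. Hence $H(0)$ is a subcoalgebra closed under multiplication and containing $1_H$. Restricting $\Phi$ to $H(0)^{\otimes3}$ gives the reassociator $\Phi_0$, and it is convolution-invertible. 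Indeed, since $\Phi$ vanishes off degree $0$, so does $\Phi^{-1}$: the degree-zero component of the convolution identity $\Phi*\Phi^{-1}=\varepsilon$ shows that $\Phi^{-1}|_{H(0)^{\otimes 3}}$ is the inverse of $\Phi_0$. The identities \eqref{eq:associativity}--\eqref{eq:Phi(1)=e} restrict immediately.

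\textbf{Step 2: the bicomodule structure on $H(1)$.} Let $\pi_0,\pi_1$ be the projections onto $H(0)$ and $H(1)$. For $m\in H(1)$ we have $\Delta(m)\in H(0)\otimes H(1)\oplus H(1)\otimes H(0)$. Set
\[
\rho^L=(\pi_0\otimes\pi_1)\Delta,\qquad \rho^R=(\pi_1\otimes\pi_0)\Delta .
\]
Take the degree-$(0,0,1)$ and $(1,0,0)$ components of coassociativity to get coassociativity of each coaction, and the $(0,1,0)$ component to get that they commute. Counitality follows from $\varepsilon|_{H(1)}=0$.

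\textbf{Step 3: the actions and their properties.} Define $p_L(h\otimes m)=hm$ and $p_R(m\otimes h)=mh$ for $h\in H(0)$ and $m\in H(1)$; both land in $H(1)$. Since $m$ is a coalgebra map, $\Delta(hm)=h_1m_1\otimes h_2m_2$. Projecting onto the $(0,1)$ component, only the terms with $m_1\in H(0)$ survive, so $\rho^L(hm)=h_1m_{-1}\otimes h_2m_0$; similarly for the $(1,0)$ part. Thus $p_L$ and $p_R$ are bicomodule maps, with the diagonal structure on $H(0)\otimes H(1)$. Unitality is \eqref{eq:1h=h1=h}.

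\textbf{Step 4: the three mixed associativity identities (main obstacle).} Apply \eqref{eq:associativity} to $(g,h,m)$, $(m,g,h)$ and $(g,m,h)$ with $g,h\in H(0)$ and $m\in H(1)$, and project onto $H(1)$. Expand each side via Sweedler components. The factor $\Phi(a_2,b_2,c_2)$ is nonzero only when $a_2,b_2,c_2$ are all in degree $0$. Consequently, for $c=m$ the third tensor factor splits as $m_0\in H(1)$, $m_1\in H(0)$ on the left side, and as $m_{-1}\in H(0)$, $m_0\in H(1)$ on the right side. The remaining products then lie in $H(1)$ automatically by degree. The care needed is in checking that no other degree splittings contribute. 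This holds because the total degree is $1$ and the $\Phi$-factors force degree $0$ exactly in their arguments; the leftover product is then of degree $1$ precisely when $m$'s degree-one piece sits in it. This yields exactly the three identities in the definition of an $H(0)$-Majid bimodule.
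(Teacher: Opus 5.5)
Your proposal is correct and follows the paper's proof. The paper also takes the coactions to be the $H(0)\otimes H(1)$ and $H(1)\otimes H(0)$ components of $\Delta$ and the actions to be restrictions of the multiplication, and leaves the axioms as a direct verification. Your Steps 1--4, in particular reading off the three mixed identities from \eqref{eq:associativity} using that $\Phi$ vanishes unless all arguments have degree $0$, carry out exactly that verification.
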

\begin{proof}
Let $\pi: H \rightarrow H(1)$ denote the quotient map. For any $x\in H(1)$, define
\begin{eqnarray*}\label{comodulestructure}
\rho^L_{H(1)}(x)=(\id\otimes\pi)\Delta(x),\;\;\rho^R_{H(1)}(x)=(\pi\otimes\id)\Delta(x).
\end{eqnarray*}
A direct verification shows that $(H(1), \rho^L, \rho^R)$ is an $H(0)$-bicomodule. Moreover, we equip $H(1)$ with an $H(0)$-bimodule structure by setting
$$ h\cdot x:=hx,\; \;  x\cdot h:=xh, \;\; \text{for all }h\in H(0),x\in H(1).$$
Together with the bicomodule and bimodule structures defined above, $H(1)$ is an $H(0)$-Majid bimodule.
\end{proof}

By Lemmas \ref{lem:cotensor=coquasibialgebra} and \ref{lem:gradedMajid}, we immediately have the following corollary.
\begin{corollary}\label{coro:coquaibialgiff}
Let $H$ be a coalgebra over $\k$, and let $M$ be an $H$-bicomodule. Then $\operatorname{CoT}_H(M)$ admits a graded coquasi-bialgebra structure with respect to the grading (\ref{eq:Cotgrading}) whose reassociator is concentrated in degree $0$ if and only if $H$ is a coquasi-bialgebra and $M$ is an $H$-Majid bimodule.
\end{corollary}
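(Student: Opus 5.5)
The corollary is an ``if and only if'', and I will prove the two directions separately. Both rest on Lemmas \ref{lem:cotensor=coquasibialgebra} and \ref{lem:gradedMajid}.

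\emph{Sufficiency.} Suppose $H$ is a coquasi-bialgebra and $M$ is an $H$-Majid bimodule. Lemma \ref{lem:cotensor=coquasibialgebra} gives a coquasi-bialgebra structure $(m',1_H,\Phi')$ on $\operatorname{CoT}_H(M)$. It remains to check two properties.
\begin{itemize}
\item \textbf{Gradedness.} I must show $m'(M^{\Box i}\otimes M^{\Box j})\subseteq M^{\Box(i+j)}$. The component $m'_0$ factors through $\pi_0\otimes\pi_0$, so it vanishes unless $i=j=0$. The component $m'_1$ factors through $\pi_0\otimes\pi_1\oplus\pi_1\otimes\pi_0$, so it vanishes unless $i+j=1$. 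For $n\ge2$, $m'_n=(m'_1)^{\otimes n}\Delta^{n-1}$. Since $\Delta$ on the tensor square of $\operatorname{CoT}_H(M)$ preserves total degree, each tensor factor fed to $m'_1$ has total degree $0$ or $1$, and $m'_1$ kills the degree-$0$ factors. Hence only the summands in which exactly $n$ factors have degree $1$ survive, which forces $i+j=n$.
\item \textbf{Reassociator in degree $0$.} By construction $\Phi'$ is $\Phi$ on $H^{\otimes3}$ and zero otherwise, so it is concentrated in degree $0$.
\end{itemize}

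\emph{Necessity.} Suppose $\operatorname{CoT}_H(M)$ carries a graded coquasi-bialgebra structure for the grading (\ref{eq:Cotgrading}), with reassociator concentrated in degree $0$. I plan to argue as follows.
\begin{itemize}
\item \textbf{$H$ is a coquasi-bialgebra.} Gradedness of the product gives $H\cdot H\subseteq H$. The unit is grouplike, since $\mu$ is a coalgebra map; grouplikes of a graded coalgebra of this cotensor type lie in degree $0$, because $\Delta$ restricted to positive degrees has no $x\otimes x$ term in the right bidegree. So $1\in H$. The restriction of $\Phi'$ to $H^{\otimes 3}$ then satisfies (\ref{eq:associativity})--(\ref{eq:Phi(1)=e}) on $H$: applying $\Delta$ to elements of $H$ stays inside $H\otimes H$, so every identity restricts. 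Convolution-invertibility of the restriction follows by restricting $\Phi'^{-1}$ as well.
\item \textbf{$M$ is an $H$-Majid bimodule.} Lemma \ref{lem:gradedMajid} makes $H(1)=M$ an $H(0)=H$-Majid bimodule.
\end{itemize}

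\emph{Main obstacle.} The step needing care is checking that the bicomodule structure produced by Lemma \ref{lem:gradedMajid}, namely $(\id\otimes\pi)\Delta$ and $(\pi\otimes\id)\Delta$, coincides with the original $(\rho^L_M,\rho^R_M)$. This follows because $\Delta|_M=\rho^L_M+\rho^R_M$ in the cotensor coalgebra, so projecting onto $H\otimes M$ and $M\otimes H$ recovers $\rho^L_M$ and $\rho^R_M$ respectively. The Majid bimodule structure is therefore on the given bicomodule $M$, as the corollary requires.
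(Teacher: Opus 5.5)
Your proposal is correct and follows the paper's route exactly: the paper obtains the corollary immediately from Lemma \ref{lem:cotensor=coquasibialgebra} for sufficiency and Lemma \ref{lem:gradedMajid} for necessity. Your additional checks only make explicit what the paper leaves implicit. These are that the unit lies in degree $0$, that the axioms restrict to $H$, and that the bicomodule structure from Lemma \ref{lem:gradedMajid} agrees with $(\rho^L_M,\rho^R_M)$ because $\Delta|_M=\rho^L_M+\rho^R_M$.
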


Next, we prove that if, in addition, $H$ is a coquasi-Hopf algebra, then $\operatorname{CoT}_H(M)$ also admits a coquasi-Hopf algebra structure.
\begin{proposition}\label{prop:CotCQH}
Let $H$ be a coquasi-Hopf algebra over $\k$, and let $M$ be an $H$-Majid bimodule. Then $\operatorname{CoT}_H(M)$ admits a graded coquasi-Hopf algebra structure with respect to the grading (\ref{eq:Cotgrading}).
\end{proposition}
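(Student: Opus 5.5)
By Lemma \ref{lem:cotensor=coquasibialgebra}, $\operatorname{CoT}_H(M)$ is already a graded coquasi-bialgebra with reassociator $\Phi'$ concentrated in degree $0$. It therefore remains to construct a coquasi-antipode $(S',\alpha',\beta')$. I would take
$$\alpha':=\alpha\circ\pi_0,\qquad \beta':=\beta\circ\pi_0,$$
so both vanish outside degree $0$. For $S'$ I would use the universal property (Lemma \ref{lemma:universal}) applied to the opposite coalgebra $X=\operatorname{CoT}_H(M)^{\mathrm{cop}}$; a coalgebra antimorphism $\operatorname{CoT}_H(M)\to\operatorname{CoT}_H(M)$ is the same thing as a coalgebra map $X\to\operatorname{CoT}_H(M)$. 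Set $S'_0:=S\circ\pi_0$, which is a coalgebra map $X\to H$ because $S$ is a coalgebra antimorphism. Then $S'_1:X\to M$ must be an $H$-bicomodule map, where $X$ carries the bicomodule structure induced by $S'_0$. The natural candidate is
$$S'_1(m)=\beta(m_{-1})\,\alpha(\cdots)\ \text{-twisted expression of the form }\ S(m_{-2})\cdot m_0\cdot S(m_2)\ \text{with } \Phi\text{-corrections},$$
modelled on Nichols' formula $S(m)=-S(m_{-1})\,m_0\,S(m_1)$. Concretely, I would transport the problem through Lemma \ref{lem:Majidmod=YD}. Write $M\cong N\otimes H$ with $N=M^{coH}$ a Yetter–Drinfeld module. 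Then define $S'_1$ on $n\otimes h$ by the quasi-version of $n\otimes h\mapsto -(S(h_1)S(n_{-1})\rhd n_0)\otimes S(h_2)S(n_{-2})$-type formula, decorated with $f$ from \eqref{eq:S(hg)}, $\alpha$, $\beta$ and $\Phi^{\pm1}$ so that it is colinear. The higher components are then forced by Lemma \ref{lemma:universal}: $S'_n=(S'_1)^{\otimes n}\Delta_X^{n-1}$. Since $S'_n$ vanishes on $M^{\Box k}$ for $k<n$, the finiteness condition holds, and $S'$ is a graded coalgebra antimorphism.

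The first key step is to verify the identities \eqref{eq:alpha,beta}. Because $\alpha'$ and $\beta'$ live in degree $0$, the expression $S'(a_1)\alpha'(a_2)a_3$ for homogeneous $a$ of degree $n\ge1$ only involves the parts of $\Delta^2(a)$ whose middle tensorand lies in $H$. Since $\operatorname{CoT}_H(M)$ is generated as a coalgebra by degrees $0$ and $1$ (cogenerated via Lemma \ref{lemma:universal}), I would reduce to showing that the map $a\mapsto S'(a_1)\alpha'(a_2)a_3-\alpha'(a)1$ vanishes. The idea is to recognise this map as a composite whose degree-$n$ part is computed by an iterated product of degree-one pieces, and to check directly that the degree-one piece vanishes: $S'_1(m_{-1}\text{-part})\,\alpha\,m_0+S(m_{-1})\alpha(\cdot)\,m_0$-type terms cancel. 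This cancellation is exactly what the sign and the $\alpha$-twist in $S'_1$ are designed to achieve. The identity for $\beta'$ is handled symmetrically.

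The second key step is \eqref{eq:Phialphabeta=e}. Both sides involve only $\Phi'$, $\alpha'$ and $\beta'$, all concentrated in degree $0$, so for $a$ of positive degree every term contains a factor evaluated on a positive-degree component and vanishes. This is consistent with $\varepsilon(a)=0$. In degree $0$ the identity is the one for $H$. Note that $S'$ never enters this check.

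I expect the main obstacle to be the explicit choice of $S'_1$ and the verification that it is an $H$-bicomodule map satisfying the degree-one cancellation. The reassociator makes the naive Nichols formula fail. To handle this, I would first work in the Yetter–Drinfeld picture, where the left comodule structure and $\rhd$ are explicit, and use \eqref{eq:S(hg)} to move $S$ past products. The alternative I would try if this gets unwieldy is the categorical route: $\operatorname{CoT}_H(M)$-comodules of finite type admit duals built from duals of $H$-comodules, and the coquasi-antipode can then be reconstructed via Tannaka–Krein.
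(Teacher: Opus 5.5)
Your outline matches the paper's proof step for step: $\alpha'=\alpha\pi_0$, $\beta'=\beta\pi_0$; $S'$ obtained from Lemma~\ref{lemma:universal} on $X=\operatorname{CoT}_H(M)^{\mathrm{cop}}$ with $S'_0=S\pi_0$ and $S'_1=T\pi_1$; a degree-one check of \eqref{eq:alpha,beta}; and \eqref{eq:Phialphabeta=e} for free by degree. (Note that $S'$ does occur in \eqref{eq:Phialphabeta=e}, but it is graded with $S'|_H=S$, so the degree argument still works.)

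\textbf{The gap.} You never produce $T$. Phrases such as ``a twisted expression of the form $S(m_{-2})\cdot m_0\cdot S(m_2)$ with $\Phi$-corrections'' and ``decorated with $f,\alpha,\beta,\Phi^{\pm1}$ so that it is colinear'' assume exactly what has to be proved. One must exhibit a single map $T$ that
\begin{itemize}
\item[(i)] is an $H$-bicomodule map $X\to M$, i.e.\ $\rho^L(T(x))=S(x_1)\otimes T(x_0)$ and $\rho^R(T(x))=T(x_0)\otimes S(x_{-1})$, and
\item[(ii)] makes both $S'(a_1)\alpha'(a_2)a_3$ and $a_1\beta'(a_2)S'(a_3)$ vanish on $M$.
\end{itemize}
Neither verification can begin without a formula. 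As written, the proposal therefore proves nothing beyond Lemma~\ref{lem:cotensor=coquasibialgebra}. The Tannaka--Krein fallback is not a shortcut either: it would require proving rigidity of the finite-dimensional $\operatorname{CoT}_H(M)$-comodules and a reconstruction theorem for coquasi-antipodes, and you supply neither.

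\textbf{The missing idea.} Use \emph{left} coinvariants, $H\otimes{}^{coH}M\cong M$, $h\otimes m\mapsto h\cdot m$, instead of $M^{coH}\otimes H$. For $m\in{}^{coH}M$ the factor $S(m_{-1})$ in Nichols' formula drops out, and the Hopf rule $S(hm)=S(m)S(h)=-m_0S(m_1)S(h)$ becomes
\[
T(h\cdot m)=-\beta(m_1)\,\bigl(m_0\cdot S(m_2)\bigr)\cdot S(h_1)\,f(h_2,m_3),
\]
with $f$ as in \eqref{eq:f}. Then:
\begin{itemize}
\item[(a)] Right colinearity follows from $m_1\beta(m_2)S(m_3)=\beta(m)1_H$.
\item[(b)] Left colinearity follows from \eqref{eq:S(hg)}.
\item[(c)] After moving reassociators with the Majid bimodule axioms and the pentagon, the $\beta$-identity on $h\cdot m$ reduces to $\delta(h_1,g_1)f(h_2,g_2)=\beta(hg)$, where $\delta(h,g)=\Phi(h_1g_1,S(g_5),S(h_4))\beta(h_3)\Phi^{-1}(h_2,g_2,S(g_4))\beta(g_3)$.
\item[(d)] The $\alpha$-identity needs a different relation, involving $\gamma$ from \eqref{eq:gamma}, taken from the proof of \cite[Theorem 4.12]{Yu26}.
\end{itemize}
So the two checks are not ``symmetric'' as you suggest, because $T$ is one-sided. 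In your $M^{coH}\otimes H$ picture, the left coaction $n_{-1}h_1$ and the left action of Lemma~\ref{lem:Majidmod=YD} bring in the Yetter--Drinfeld action and extra $\Phi$'s, making the same checks considerably heavier.

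\textbf{Reduction to degree one.} Your reduction from $M^{\Box n}$ to degree one is correct, but the reason is not that $\operatorname{CoT}_H(M)$ is ``cogenerated''. The map $L(a)=S'(a_1)\alpha'(a_2)a_3$ is not a coalgebra map, so cofreeness does not apply directly. What works is the identity $\Delta L(a)=L(a_2)\otimes S'(a_1)a_3$. Its component in $M\otimes M^{\Box(n-1)}$ vanishes once $L|_M=0$, and that component of $\Delta$ is injective on $M^{\Box n}$ for $n\ge 2$. The same argument applies to $R(a)=a_1\beta'(a_2)S'(a_3)$, using $\Delta R(a)=a_1S'(a_3)\otimes R(a_2)$ and the $M^{\Box(n-1)}\otimes M$ component.
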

\begin{proof}
Suppose $H$ is a coquasi-Hopf algebra with a coquasi-antipode $(S,\alpha,\beta)$. Using Lemma \ref{lem:cotensor=coquasibialgebra}, we obtain a graded coquasi-bialgebra structure on $\operatorname{CoT}_H(M)$. To this end, we extend $\alpha$ and $\beta$ to functions on $\operatorname{CoT}_H(M)$ by setting $\alpha(a)=\beta(a)=0$ for every homogeneous element $a\in \operatorname{CoT}_H(M)$ with $a\notin H$, and denote the resulting extensions by $\alpha^\prime$ and $\beta^\prime$. Next, applying the universal property of the cotensor coalgebra from Lemma \ref{lemma:universal}, we construct
$S^\prime: \operatorname{CoT}_H(M)\rightarrow \operatorname{CoT}_H(M)$
and show that $(S^\prime, \alpha^\prime, \beta^\prime)$ is a coquasi-antipode of $\operatorname{CoT}_H(M)$. Let $X=(\operatorname{CoT}_H(M))^{cop}$, and define $S^\prime_0$ as the composition
$$
X\xrightarrow{\pi_0} H\xrightarrow{S}H.
$$
Clearly, $S^\prime_0$ is a coalgebra map. Next, we construct a map $T:M\rightarrow M$ such that the composition
$$
X\xrightarrow{\pi_1}M\xrightarrow{T}M
$$
is an $H$-bicomodule map; we denote this composition by $S^\prime_1$.
According to \cite[Proposition 2.16]{Yu26}, we have
$$
H\otimes {}^{co H}M\cong M,\;\; h\otimes m\mapsto h\cdot m,
$$
where ${}^{co H}M= \{m\in M\mid \rho_M^L(m)=1_H\otimes m\}$. This implies that we only need to define $T$ on elements of the form $h\cdot m$, with $h\in H$ and $m\in {}^{co H}M$. Now we define
$$
T(h\cdot m):=-\beta(m_1) (m_0 \cdot S(m_2))\cdot S(h_1)f(h_2, m_3),
$$
where the map $f(h,g)$ is defined in (\ref{eq:f}).
A straightforward computation yields
\begin{equation*}
\begin{aligned}
\rho_M^R\bigl(T(h\cdot m)\bigr)
&=
-\bigl(m_0\cdot S(m_4)\bigr)\cdot S(h_2)
\otimes
m_1\beta(m_2)S(m_3)S(h_1)
f(h_3,m_5)
\\
&=
-\beta(m_1)
\bigl(m_0\cdot S(m_2)\bigr)\cdot S(h_2)
f(h_3,m_3)
\otimes S(h_1)
\\
&=
T(h_2\cdot m)\otimes S(h_1)
\\
&=
T\bigl((h\cdot m)_0\bigr)
\otimes
S\bigl((h\cdot m)_{-1}\bigr).
\end{aligned}
\end{equation*}
By \cite[Proposition~3.49]{BCPV19}, we have
\begin{equation*}
\begin{aligned}
\rho_M^L\bigl(T(h\cdot m)\bigr)
&=
-\beta(m_1)S(m_3)S(h_2)
\otimes
\bigl(m_0\cdot S(m_2)\bigr)\cdot S(h_1)
f(h_3,m_4)
\\
&=
-\beta(m_1)S(h_3m_4)
\otimes
\bigl(m_0\cdot S(m_2)\bigr)\cdot S(h_1)
f(h_2,m_3)
\\
&=
S(h_2m_1)\otimes T(h_1\cdot m_0)
\\
&=
S\bigl((h\cdot m)_1\bigr)
\otimes
T\bigl((h\cdot m)_0\bigr).
\end{aligned}
\end{equation*}
Hence $S^\prime_1$ is an $H$-bicomodule map.
For $n\geq 2, $ define $S^\prime_n$ as the composition
$$
(\operatorname{CoT}_H(M))^{cop}\xrightarrow{\Delta^{n-1}} ((\operatorname{CoT}_H(M))^{cop})^{\otimes n} \xrightarrow{(S^\prime_1)^{\otimes n}} M^{\otimes n}.
$$
Define $S^\prime = \sum_{i=0} S^\prime_i$. Since the restriction of $S^\prime_n$ to $M^{\Box m}$ is zero whenever $m \neq n$, Lemma \ref{lemma:universal} implies that
$S^\prime: \operatorname{CoT}_H(M)\rightarrow \operatorname{CoT}_H(M)$
is a coalgebra antimorphism. Since (\ref{eq:Phialphabeta=e}) holds by construction, it remains to verify (\ref{eq:alpha,beta}). Similar to the proof of \cite[Theorem 4.12]{Yu26}, we first prove that every element of $M$ satisfies (\ref{eq:alpha,beta}), and then use the definition to directly show that all elements of $M^{\Box n}$ also satisfy (\ref{eq:alpha,beta}) for any $n$. Since $\alpha^\prime(M)=\beta^\prime(M)=0$, it suffices to show that for any $h\in H$ and $m\in {}^{co H}M $, we have
\begin{eqnarray*}
(\operatorname{id} * \beta^\prime * S^\prime)(h \cdot m)
=0=
(S^\prime * \alpha^\prime * \operatorname{id})(h \cdot m).
\end{eqnarray*}
First we have
\begin{eqnarray*}
&&(\operatorname{id} * \beta^\prime * S^\prime)(h \cdot m)
\\
&=&
\beta(h_2)h_1\cdot T(h_3\cdot m)
+\beta(h_2m_1)(h_1\cdot m_0)\cdot S(h_3m_2)
\\
&=&
-\beta(m_1)\beta(h_2)f(h_4,m_3)
h_1\cdot\bigl((m_0\cdot S(m_2))\cdot S(h_3)\bigr)
+\beta(h_2m_1)(h_1\cdot m_0)\cdot S(h_3m_2)
\\
&=&
-\beta(m_2)\beta(h_2)f(h_5,m_5)
\Phi(m_1,S(m_3),S(h_3))
h_1\cdot\bigl(m_0\cdot(S(m_4)S(h_4))\bigr)
\\
&&
+\beta(h_2m_1)(h_1\cdot m_0)\cdot S(h_3m_2)
\\
&=&
-\beta(m_2)\beta(h_2)f(h_4,m_4)
\Phi(m_1,S(m_3),S(h_3))
h_1\cdot\bigl(m_0\cdot S(h_5m_5)\bigr)
\\
&&
+\beta(h_2m_1)(h_1\cdot m_0)\cdot S(h_3m_2)
\\
&=&
-\beta(m_3)\beta(h_3)f(h_5,m_5)
\Phi(m_2,S(m_4),S(h_4))
\Phi^{-1}(h_2,m_1,S(h_6m_6))(h_1\cdot m_0)
\\&& \cdot S(h_7m_7)
+\beta(h_2m_1)(h_1\cdot m_0)\cdot S(h_3m_2)
\\
&=&
-\beta(m_3)\beta(h_3)f(h_6,m_6)
\Phi(m_2,S(m_4),S(h_4))
\Phi^{-1}(h_2,m_1,S(m_5)S(h_5))
(h_1\cdot m_0)
\\&&\cdot S(h_7m_7)+\beta(h_2m_1)(h_1\cdot m_0)\cdot S(h_3m_2)
\\
&=&
-\beta(m_4)\beta(h_5)f(h_8,m_8)
\Phi(h_2m_1,S(m_7),S(h_7))
\Phi^{-1}(h_3,m_2,S(m_6))
\\
&&
\Phi^{-1}(h_4,m_3S(m_5),S(h_6))
(h_1\cdot m_0)\cdot S(h_9m_9)
+\beta(h_2m_1)(h_1\cdot m_0)\cdot S(h_3m_2)
\\
&=&
-\beta(m_3)\beta(h_4)f(h_6,m_6)
\Phi(h_2m_1,S(m_5),S(h_5))
\Phi^{-1}(h_3,m_2,S(m_4))(h_1\cdot m_0)\\
&&
\cdot S(h_7m_7)+\beta(h_2m_1)(h_1\cdot m_0)\cdot S(h_3m_2)
\\
&=&
-\delta(h_2,m_1)f(h_3,m_2)
(h_1\cdot m_0)\cdot S(h_4m_3)
+\beta(h_2m_1)(h_1\cdot m_0)\cdot S(h_3m_2)
\\
&=&
-\beta(h_2m_1)(h_1\cdot m_0)\cdot S(h_3m_2)
+\beta(h_2m_1)(h_1\cdot m_0)\cdot S(h_3m_2)
\\
&=&0,
\end{eqnarray*}
where $\delta(h,g):=\Phi(h_1g_1, S(g_5), S(h_4))\beta(h_3)\Phi^{-1}(h_2,g_2,S(g_4))\beta(g_3).$
Similarly, we have
\begin{eqnarray*}
&&(S^\prime * \alpha^\prime * \operatorname{id})(h\cdot m)
\\
&=&
\alpha(h_2)S(h_1)\cdot(h_3\cdot m)
+\alpha(h_2m_1)T(h_1\cdot m_0)\cdot h_3m_2
\\
&=&
\alpha(h_2)S(h_1)\cdot(h_3\cdot m)
-\beta(m_1)f(h_2,m_3)\alpha(h_3m_4)
\bigl((m_0\cdot S(m_2))\cdot S(h_1)\bigr)\cdot h_4m_5
\\
&=&
\alpha(h_3)\bigl(S(h_2)h_4\cdot m_0\bigr)
\Phi^{-1}(S(h_1),h_5,m_1)
-\beta(m_2)f(h_3,m_5)\alpha(h_4m_6)\\
&&
\Phi(m_1,S(m_3),S(h_1))
\bigl(m_0\cdot(S(m_4)S(h_2))\bigr)\cdot h_5m_7
\\
&=&
\alpha(h_2)\Phi^{-1}(S(h_1),h_3,m_1)m_0
-\beta(m_2)f(h_2,m_4)\alpha(h_4m_6)\Phi(m_1,S(m_3),S(h_1))\\
&&
\bigl(m_0\cdot S(h_3m_5)\bigr)\cdot h_5m_7
\\
&=&
\alpha(h_2)\Phi^{-1}(S(h_1),h_3,m_1)m_0
-\beta(m_3)f(h_2,m_5)\alpha(h_5m_8)\Phi(m_2,S(m_4),S(h_1))\\
&&
\Phi(m_1,S(h_3m_6),h_7m_{10})
\cdot
m_0\cdot\bigl(S(h_4m_7)(h_6m_9)\bigr)
\\
&=&
\alpha(h_2)\Phi^{-1}(S(h_1),h_3,m_1)m_0
-\beta(m_3)f(h_2,m_5)\alpha(h_4m_7)\Phi(m_2,S(m_4),S(h_1))\\
&&
\Phi(m_1,S(h_3m_6),h_5m_8)m_0
\\
&=&
\alpha(h_2)\Phi^{-1}(S(h_1),h_3,m_1)m_0
-\beta(m_3)f(h_3,m_6)\alpha(h_4m_7)\Phi(m_2,S(m_4),S(h_1))\\
&&
\Phi(m_1,S(m_5)S(h_2),h_5m_8)m_0
\\
&=&
\alpha(h_2)\Phi^{-1}(S(h_1),h_3,m_1)m_0
-\beta(m_3)\gamma(h_3,m_6)\Phi(m_2,S(m_4),S(h_1))\\
&&
\Phi(m_1,S(m_5)S(h_2),h_4m_7)m_0
\end{eqnarray*}
From the proof of \cite[Theorem 4.12]{Yu26}, we obtain
\begin{eqnarray*}
\Phi(g_1, S(g_{5})S(h_2),h_4g_7)\Phi(g_2,S(g_4),S(h_1))\beta(g_3)\gamma(h_3, g_6)=\alpha(h_2)\Phi^{-1}(S(h_1),h_3,g),
\end{eqnarray*}
which immediately implies that the above expression reduces to
$0.$
Employing an argument analogous to the one used in the proof of \cite[Theorem 4.12]{Yu26}, we obtain from the definition of multiplication that any element of $M^{\Box n}$ verifies (\ref{eq:alpha,beta}) for all $n \ge 1$.
This completes the proof.
\end{proof}

\begin{remark}\rm
\begin{itemize}
\item[(1)]The coquasi-Hopf algebra structure of $\operatorname{CoT}_H(M)$ has already been considered in some special cases, by specializing $H$ appropriately. Huang \cite{Hua09} generalized the theory of Hopf quivers (\cite{CR02}) to the coquasi-Hopf algebra case, where $H$ is taken to be a pointed cosemisimple coquasi-Hopf algebra of the form $(\k G,\omega)$, with $\omega$ a normalized $3$-cocycle on a group $G$. Subsequently, in our previous work \cite{Yu26}, we introduced generalized Hopf quivers, in which $H$ is taken to be a (not necessarily pointed) cosemisimple coquasi-Hopf algebra.
\item[(2)]The coquasi-Hopf algebra structure on $\operatorname{CoT}_H(M)$ constructed in Proposition \ref{prop:CotCQH} is determined by the Majid bimodule structure on $M$, or equivalently, by Lemma \ref{lem:Majidmod=YD}, the Yetter-Drinfeld module structure on $M^{co H}$. In other words, for the same vector space $M$, endowing it with two distinct Majid bimodule structures necessarily gives rise to distinct multiplication structures on $\operatorname{CoT}_H(M)$. In what follows, whenever we refer to the coquasi-Hopf algebra structure on $\operatorname{CoT}_H(M)$, where $H$ is a coquasi-Hopf algebra and $M$ is an $H$-Majid bimodule, we always mean the one constructed in the proof of Proposition \ref{prop:CotCQH}. Moreover, it should be pointed out that Nichols' construction concerning the antipode does not apply in this setting.
\end{itemize}
\end{remark}
Bosonization is well established for Hopf algebras, and was later extended to coquasi-Hopf algebras in \cite{AP13}. In particular, $\operatorname{CoT}_H(M)$ can be realized as such a bosonization.
Let $H$ be a coquasi-Hopf algebra over $\k$, $M\in {}_H^{H}\mathscr{M}^{H}_H$ and $V=M^{co H}$. Lemma \ref{lem:Majidmod=YD} gives $H^{co H}=\k1$ and $(M^{\Box n})^{co H}=V^{\otimes n}$, which immediately implies that $(\operatorname{CoT}_H(M))^{co H}=T(V)$. Note that there exist morphisms of coquasi-Hopf algebras
$$
\pi: \operatorname{CoT}_H(M) \rightarrow H
\;\;\text{and}\;\;
\iota: H \rightarrow \operatorname{CoT}_H(M),
$$
where $\pi$ is the natural projection onto the degree $0$ component of $\operatorname{CoT}_H(M)$, and $\iota$ is the canonical embedding, satisfying
$
\pi \iota = \operatorname{id}_H.
$
Since every coquasi-Hopf algebra admits a preantipode by \cite[Theorem 3.10]{AP12}, it follows from \cite[Theorem 5.8]{AP13} that
\begin{equation*}
  \begin{array}{rcl}
\psi: T(V) \# H &\rightarrow& \operatorname{CoT}_H(M)\\
        r\# h\;\; &\mapsto & \;\;r \iota(h)
\end{array}
\end{equation*}
is an isomorphism of coquasi-bialgebras. Indeed, we can endow $T(V)\#H$ with a coquasi-antipode structure so that the above isomorphism becomes a coquasi-Hopf algebra isomorphism. This result generalizes \cite[Theorem 11]{Ros98} from Hopf algebras to coquasi-Hopf algebras, and parallels \cite[Theorem 5.11]{Bul20} in the quasi-Hopf setting. It is worth noting that, in this case, $T(V)$ is a Hopf algebra in ${}^H_H\mathcal{YD}$, whose braided Hopf algebra structure is given in \cite[Section 5]{AP13}, although it need not be associative as an ordinary algebra.

Moreover, let $S_H(M)$ be the coquasi-Hopf subalgebra of the coquasi-Hopf algebra $\operatorname{CoT}_H(M)$ generated by $H$ and $M$. Following \cite[Subsection 1.4]{Ros98}, we refer to $S_H(M)$ as the \textit{quasi-quantum symmetric algebra}. By Lemma \ref{lem:Majidmod=YD}, $V$ is a left-left Yetter-Drinfeld module over $H$ if and only if $M$ is an $H$-Majid bimodule, and in this case
$$
S_H(M)\cong \mathcal{B}(V)\# H,
$$
where $\mathcal{B}(V)$ is the Nichols algebra of $V$ (see \cite[Subsection 5.7]{BB13} for the definition). In \cite{HLY11}, Huang, Liu, and Ye constructed a coquasi-Hopf algebra structure on the path coalgebra of a basic cycle. This is an (infinite-dimensional) cotensor coquasi-Hopf algebra $\operatorname{CoT}_H(M)$, where $H$ is $(\k \mathbb{Z}_n, \omega)$, and $M$ is the vector space spanned by the arrows. The subalgebra $M(n,s,q)$ (\cite[Proposition 3.10]{HLY11}) generated by the vertices and arrows is exactly a finite-dimensional quasi-quantum symmetric algebra.

\section{Coquasi-Hopf algebras without the dual Chevalley property}\label{section3}
\subsection{Graded coquasi-Hopf algebras from the standard filtration}
In this subsection, we aim to study coquasi-Hopf algebras without the dual Chevalley property. Our first goal is to extend the ideas of Andruskiewitsch and Cuadra \cite{AC13} to the coquasi-Hopf algebra setting.

We let $H$ be a coquasi-Hopf algebra over $\k,$ and let $\{H_n\}_{n\geq0}$ denote its coradical filtration. Inspired by \cite{AC13}, we define the \textit{standard filtration} $\{H_{[n]}\}_{n\geq 0}$ of $H$ as follows:
\begin{itemize}
    \item [(1)] The Majid coradical of $H$ is the coquasi-Hopf subalgebra $H_{[0]}$ of $H$ generated by $H_0;$
    \item[(2)] The subsequent terms are defined recursively by $$H_{[n+1]}:=H_{[0]}\wedge H_{[n]}=\Delta^{-1}(H_{[0]}\otimes H+ H\otimes H_{[n]}),\;\; n\geq 0.$$
\end{itemize}
Note that if $H$ has the dual Chevalley property, i.e., its coradical is a coquasi-Hopf subalgebra, then $H_{[0]}=H_0$ and the standard filtration coincides with the coradical filtration.

With the notations above, we have
\begin{lemma}\label{lem:Majidcoradical}
Let $H$ be a coquasi-Hopf algebra over $\k$ with a coquasi-antipode $(S,\alpha,\beta)$ such that $S(H_0)\subseteq H_0$. Then
\begin{itemize}
    \item [(1)] $H_{[0]}$ is a coquasi-Hopf subalgebra of $H$ and its coradical is $H_0$;
    \item[(2)] $H_n \subseteq H_{[n]}$ and $\{H_{[n]}\}_{n\geq 0}$ is a coquasi-Hopf algebra filtration.
\end{itemize}
\end{lemma}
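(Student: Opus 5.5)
For part (1), I will first give an explicit description of $H_{[0]}$. Let $A$ be the span of all finite products of elements of $H_0$, with every bracketing allowed, together with $1_H$. Since $H_0$ is a subcoalgebra and $m$ is a coalgebra map, every product of subcoalgebras is a subcoalgebra. Hence $A$ is a subcoalgebra closed under multiplication and containing $1_H$. The hypothesis $S(H_0)\subseteq H_0$ will give $S(A)\subseteq A$.

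To see this, I will use the twisted antimultiplicativity (\ref{eq:S(hg)}). Convolving with $f^{-1}$ on the left and $f$ on the right gives
\[
S(hg)=f^{-1}(h_1,g_1)\,S(g_2)S(h_2)\,f(h_3,g_3).
\]
Since $A$ is a subcoalgebra, the Sweedler components of $h,g\in A$ lie in $A$. An induction on the length of the product then shows that $S$ maps products of elements of $H_0$ into $A$. Therefore $A$ is closed under $S$, so with the restrictions of $\Phi,\alpha,\beta$ it is a coquasi-Hopf subalgebra. Because any coquasi-Hopf subalgebra containing $H_0$ must contain $A$, we get $A=H_{[0]}$.

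The coradical of a subcoalgebra $D\subseteq H$ is $D\cap H_0$. This holds because the simple subcoalgebras of $D$ are exactly the simple subcoalgebras of $H$ that are contained in $D$. Since $H_0\subseteq H_{[0]}$, the coradical of $H_{[0]}$ is $H_0$.

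For part (2), I will show $H_n\subseteq H_{[n]}$ by induction on $n$, using $H_{n+1}=H_0\wedge H_n$. The wedge product is monotone in both arguments, so $H_0\subseteq H_{[0]}$ and $H_n\subseteq H_{[n]}$ give $H_{n+1}\subseteq H_{[0]}\wedge H_{[n]}=H_{[n+1]}$. In particular $\bigcup_n H_{[n]}=H$.

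Next I will show that $\{H_{[n]}\}$ is a coalgebra filtration. It is standard (Sweedler) that for a subcoalgebra $D$, the iterated wedges $\wedge^{n+1}D$ form a coalgebra filtration of $\bigcup_n\wedge^{n}D$, with $H_{[n]}=\wedge^{n+1}H_{[0]}$ and $\Delta(H_{[n]})\subseteq\sum_{i+j=n}H_{[i]}\otimes H_{[j]}$. I will quote this fact rather than reprove it.

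The main obstacle is multiplicativity, $H_{[i]}H_{[j]}\subseteq H_{[i+j]}$, together with $S(H_{[n]})\subseteq H_{[n]}$. For multiplicativity I will induct on $i+j$. Take $x\in H_{[i]}$ and $y\in H_{[j]}$. Because $m$ is a coalgebra map, $\Delta(xy)=x_1y_1\otimes x_2y_2$. Expanding via the filtration property of $\Delta$, every term has the form $H_{[a]}H_{[b]}\otimes H_{[c]}H_{[d]}$ with $a+c=i$ and $b+d=j$. If $a=b=0$, the left factor lies in $H_{[0]}H_{[0]}\subseteq H_{[0]}$. Otherwise $c+d<i+j$, so by induction the right factor lies in $H_{[c+d]}\subseteq H_{[i+j-1]}$. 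Hence $\Delta(xy)\in H_{[0]}\otimes H+H\otimes H_{[i+j-1]}$, which says $xy\in H_{[i+j]}$. This is exactly the classical argument, since the reassociator plays no role here. The unit is handled by $1_H\in H_{[0]}$.

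For the antipode I will induct on $n$. $S$ is a coalgebra antimorphism, so $\Delta(S(x))=S(x_2)\otimes S(x_1)$. Take $x\in H_{[n]}$ and write $\Delta(x)\in\sum_{i+j=n}H_{[i]}\otimes H_{[j]}$. Then $\Delta(S(x))$ lies in $\sum S(H_{[j]})\otimes S(H_{[i]})$. Using $S(H_{[0]})\subseteq H_{[0]}$ and the inductive hypothesis, the terms with $j=0$ lie in $H_{[0]}\otimes H$. The remaining terms have $i<n$, so their right factor lies in $S(H_{[i]})\subseteq H_{[i]}\subseteq H_{[n-1]}$. Therefore $S(x)\in H_{[0]}\wedge H_{[n-1]}=H_{[n]}$.

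The delicate point is making sure the inductive hypothesis is applied to the correct summands, since $\Delta(x)$ is not homogeneous. I will handle this by working with the filtration property in its summed form, $\Delta(H_{[n]})\subseteq\sum_{i+j=n}H_{[i]}\otimes H_{[j]}$, rather than with individual Sweedler components.
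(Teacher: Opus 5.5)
Your proposal is correct and follows essentially the same route as the paper. The paper cites the Andruskiewitsch--Cuadra argument \cite[Lemma 1.1]{AC13} for closure of $H_{[0]}$ under multiplication and comultiplication, for $(H_{[0]})_0=H_0$, and for the filtration property, and it uses the twisted antimultiplicativity \eqref{eq:S(hg)} to get $S(H_{[0]})\subseteq H_{[0]}$; your write-up supplies the details the paper leaves implicit, namely all bracketings, the length induction for $S$, and the wedge inductions for multiplicativity and $S$-stability.
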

\begin{proof}
\begin{itemize}
    \item [(1)] Similar to the proof of \cite[Lemma 1.1(i)]{AC13}, we can show that $H_{[0]}$ is closed under multiplication and comultiplication, and that $(H_{[0]})_0=H_0$. By (\ref{eq:S(hg)}), we have $S(H_{[0]})\subseteq H_{[0]}$ and thus $H_{[0]}$ is a coquasi-Hopf subalgebra.
    \item [(2)]The proof is entirely analogous to that of \cite[Lemma 1.1(ii)]{AC13}.
\end{itemize}
\end{proof}
\begin{remark}\rm\label{rm:S(H0)<H0}
We remark that if the coquasi-antipode $(S,\alpha,\beta)$ of $H$ is injective, then $S$ maps a simple subcoalgebra to a simple subcoalgebra, so that $S(H_0) \subseteq H_0$. According to \cite[Corollary 4.21]{Yu26}, the coquasi-antipode of a coquasi-Hopf algebra with the dual Chevalley property is injective.
\end{remark}
Analogous to the associated graded Hopf algebra structure arising from the filtration of a Hopf algebra, we have the following lemma, which generalizes \cite[Proposition 6.2]{AP13}, where $H_0$ is a coquasi-Hopf subalgebra.

\begin{lemma}\label{lem:gr}
Let $(H, m, u, \Delta, \varepsilon, \Phi)$ be a coquasi-Hopf algebra over $\k$ with a coquasi-antipode $(S,\alpha,\beta)$ such that $S(H_0)\subseteq H_0$. Then the associated graded coalgebra
$$
\operatorname{gr} H:=\bigoplus_{n\geq0}H_{[n]}/H_{[n-1]}
$$
is a graded coquasi-Hopf algebra. Its structure maps are given on homogeneous elements by
\begin{equation*}
m_{\operatorname{gr} H}(\bar{x} \otimes \bar{y}):=xy+H_{[m+n-1]}, \;\;
1_{\operatorname{gr} H}:=1_H,
\end{equation*}
\begin{equation*}
\Delta_{\operatorname{gr} H}(\bar{x}) :=\sum\limits_{i+j=m} (x_1^{(i)} + H_{[i-1]}) \otimes (x_2^{(j)} + H_{[j-1]}),
\;\;
\varepsilon_{\operatorname{gr} H}(\bar{x})
:=\delta_{m,0}\varepsilon(x),
\end{equation*}
\begin{equation*}
\Phi_{\operatorname{gr} H}(\bar{x},\bar{y},\bar{z})
:=\delta_{m,0}\delta_{n,0}\delta_{p,0}\Phi(x,y,z),
\end{equation*}
\begin{equation*}
S_{\operatorname{gr} H}(\bar{x})
:=S(x)+H_{[m-1]},\;\;
\alpha_{\operatorname{gr} H}(\bar{x})
:=\delta_{m,0}\alpha(x),\;\;
\beta_{\operatorname{gr} H}(\bar{x})
:=\delta_{m,0}\beta(x),
\end{equation*}
where $\bar{x}=x+H_{[m-1]}$, $\bar{y}=y+H_{[n-1]}$, and $\bar{z}=z+H_{[p-1]}$, and $\Delta(x)=\sum_{i+j=m} x_1^{(i)}\otimes x_2^{(j)} \subseteq \sum_{i+j=m} H_{[i]}\otimes H_{[j]}$. Moreover, we have $$(\operatorname{gr} H)_{[n]}=\bigoplus_{i\leq n} \operatorname{gr} H (i).$$
\end{lemma}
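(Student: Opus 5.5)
The plan is to use Lemma \ref{lem:Majidcoradical}(2): $\{H_{[n]}\}_{n\geq 0}$ is a coquasi-Hopf algebra filtration. That is, $H_{[m]}H_{[n]}\subseteq H_{[m+n]}$, $\Delta(H_{[n]})\subseteq\sum_{i+j=n}H_{[i]}\otimes H_{[j]}$, $S(H_{[n]})\subseteq H_{[n]}$, and $1_H\in H_{[0]}$. I will first check that every structure map in the statement is well defined.

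\textbf{Coalgebra and multiplication.} The associated graded coalgebra of a coalgebra filtration is a graded coalgebra. The multiplication $\bar x\otimes\bar y\mapsto xy+H_{[m+n-1]}$ does not depend on representatives, because $H_{[m-1]}H_{[n]}+H_{[m]}H_{[n-1]}\subseteq H_{[m+n-1]}$. It is a coalgebra map since $m$ is a coalgebra map and both sides are computed degreewise.

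\textbf{The maps $\Phi$, $\alpha$, $\beta$.} These are defined only on degree $0$, where $\operatorname{gr}H(0)=H_{[0]}$ and they restrict to the structure of the coquasi-Hopf subalgebra $H_{[0]}$. Convolution invertibility of $\Phi_{\operatorname{gr}H}$ follows by taking $\Phi^{-1}|_{H_{[0]}}$ and extending it by zero. This works because degree-zero components of iterated coproducts of homogeneous elements only arise from degree-zero elements.

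\textbf{The map $S_{\operatorname{gr}H}$.} It is well defined because $S(H_{[m-1]})\subseteq H_{[m-1]}$. It is a graded coalgebra antimorphism since $S$ is one and preserves the filtration.

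\textbf{Verifying the axioms.} For the quasi-associativity \eqref{eq:associativity}, take homogeneous $\bar a,\bar b,\bar c$ of degrees $m,n,p$. Project $a_1(b_1c_1)\Phi(a_2,b_2,c_2)$ to degree $m+n+p$. Since $\Phi_{\operatorname{gr}H}$ only sees degree-$0$ tensor factors, the relevant terms are exactly the images of the components with $a_2,b_2,c_2\in H_{[0]}$ modulo lower filtration. The key point is that the degree-$(m,0)$ part of $\Delta_{\operatorname{gr}H}(\bar a)$ is the class of $\Delta(a)$ in $\bigl(H_{[m]}/H_{[m-1]}\bigr)\otimes H_{[0]}$. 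Applying $\Phi$ to the $H_{[0]}$-legs of $\Delta(a)$, the terms with $a_2\notin H_{[0]}$ produce only elements of lower filtration. So the identity in $H$ read modulo $H_{[m+n+p-1]}$ gives the graded identity.

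\textbf{Main obstacle.} The hard part is making this truncation precise: $\Phi$ is a functional on all of $H^{\otimes 3}$, not just on $H_{[0]}^{\otimes 3}$. Hence one must show that in $H$, for $a\in H_{[m]}$, the expression
\[
a_1(b_1c_1)\Phi(a_2,b_2,c_2)
\]
is congruent modulo $H_{[m+n+p-1]}$ to the same expression with $\Phi$ replaced by $\Phi_{\operatorname{gr}H}$. I would handle this by choosing, as in the standard treatment of $\operatorname{gr}$ for coradical filtrations, a coalgebra projection onto $H_{[0]}$ compatible with the filtration. Alternatively, I would write $\Delta(a)\in\sum_{i}H_{[m-i]}\otimes H_{[i]}$ and note that a summand with second leg in $H_{[i]}$, $i>0$, has first leg in $H_{[m-i]}$, which drops the total filtration degree. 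The same argument handles \eqref{eq:penta}, \eqref{eq:Phi(1)=e}, \eqref{eq:alpha,beta} and \eqref{eq:Phialphabeta=e}; the last two only involve degree zero, or sums whose top-degree terms reduce as above.

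\textbf{The final claim.} Finally, $(\operatorname{gr}H)_{[n]}=\bigoplus_{i\le n}\operatorname{gr}H(i)$ is proved by induction. The coradical of $\operatorname{gr}H$ lies in degree $0$ and equals $H_0$, so $(\operatorname{gr}H)_{[0]}=H_{[0]}=\operatorname{gr}H(0)$. The wedge recursion with the graded coproduct then gives degrees $\le n$ at step $n$, using that $\Delta_{\operatorname{gr}H}$ is graded.
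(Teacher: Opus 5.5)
Your treatment of the coquasi-Hopf structure on $\operatorname{gr}H$ is sound, and it is essentially what the paper's appeal to \cite[Proposition 6.2]{AP13} amounts to. Lemma~\ref{lem:Majidcoradical}(2) gives well-definedness. Reading each axiom of $H$ modulo lower filtration, after splitting $\Delta(a)\in\sum_i H_{[m-i]}\otimes H_{[i]}$, then gives the graded axiom: any summand that feeds a leg of positive filtration degree into $\Phi$, $\alpha$ or $\beta$ loses total degree. Use that second option; a filtration-compatible coalgebra projection $H\to H_{[0]}$ need not exist.

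The gap is in the last claim. Gradedness of $\Delta_{\operatorname{gr}H}$ only gives the easy inclusion $\bigoplus_{i\le n}\operatorname{gr}H(i)\subseteq(\operatorname{gr}H)_{[n]}$. It cannot give the reverse inclusion. For example, in the graded coalgebra $C=\k[x]$ with $x$ primitive of degree $2$, one has $x\in C(0)\wedge C(0)$ but $x\notin C(0)\oplus C(1)$.

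The missing idea is the content of \cite[Proposition 1.2]{AC13}, which the paper cites. The recursion $H_{[d]}=H_{[0]}\wedge H_{[d-1]}$ in $H$ makes the bidegree $(1,d-1)$ component of $\Delta_{\operatorname{gr}H}$ injective on $\operatorname{gr}H(d)$ for $d\ge 2$. The argument runs as follows.
\begin{enumerate}
\item $(\operatorname{gr}H)_{[n]}$ is a graded subspace, being the preimage of a bigraded subspace under a graded map.
\item Let $\bar x\in\operatorname{gr}H(d)$ with $d\ge n+1\ge 2$ lie in $(\operatorname{gr}H)_{[n]}=\operatorname{gr}H(0)\wedge\bigoplus_{i\le n-1}\operatorname{gr}H(i)$. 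Since $d-1\ge n$, the bidegree $(1,d-1)$ part of $\operatorname{gr}H(0)\otimes\operatorname{gr}H+\operatorname{gr}H\otimes\bigoplus_{i\le n-1}\operatorname{gr}H(i)$ is zero. Hence the $(1,d-1)$ component of $\Delta_{\operatorname{gr}H}(\bar x)$ vanishes.
\item Write $\Delta(x)=\sum_{i+j=d}x^{(i,j)}$ with $x^{(i,j)}\in H_{[i]}\otimes H_{[j]}$. The vanishing in step 2 means $x^{(1,d-1)}\in H_{[0]}\otimes H_{[d-1]}+H_{[1]}\otimes H_{[d-2]}$.
\item Also $x^{(0,d)}\in H_{[0]}\otimes H$, and every remaining summand has its second leg in $H_{[d-2]}$. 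Therefore
\[
\Delta(x)\in H_{[0]}\otimes H+H\otimes H_{[d-2]},\quad\text{so } x\in H_{[0]}\wedge H_{[d-2]}=H_{[d-1]}\text{ and } \bar x=0.
\]
\end{enumerate}
Without this step your induction does not close.
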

\begin{proof}
The proof that $\operatorname{gr} H:=\bigoplus_{n\geq0}H_{[n]}/H_{[n-1]}$ is a graded coquasi-Hopf algebra is analogous to that in the case of coquasi-Hopf algebras with the dual Chevalley property; see \cite[Proposition 6.2]{AP13}. Moreover, proceeding as in the proof of \cite[Proposition 1.2]{AC13}, we obtain that $$(\operatorname{gr} H)_{[n]}=\bigoplus_{i\leq n} \operatorname{gr} H (i).$$
\end{proof}

\begin{corollary}\label{coro:grbosonization}
With the notations of Lemma \ref{lem:gr}, we have $$\operatorname{gr} H \cong (\operatorname{gr} H)^{co H_{[0]}} \# H_{[0]}.$$
\end{corollary}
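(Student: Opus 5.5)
The plan is to derive the isomorphism from the bosonization theorem of Ardizzoni and Pavarin, \cite[Theorem 5.8]{AP13}, exactly as it was applied to $\operatorname{CoT}_H(M)$ at the end of Subsection \ref{subsection3.1}. The key input is a pair of coquasi-Hopf algebra morphisms $\pi:\operatorname{gr}H\to H_{[0]}$ and $\iota:H_{[0]}\to\operatorname{gr}H$ with $\pi\iota=\operatorname{id}_{H_{[0]}}$. Here I take $\iota$ to be the inclusion of the degree-zero component $H_{[0]}/H_{[-1]}=H_{[0]}$, and $\pi$ to be the projection of $\operatorname{gr}H$ onto its degree-zero component.

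First I would check that $\iota$ and $\pi$ are morphisms of coquasi-Hopf algebras. For $\iota$, Lemma \ref{lem:Majidcoradical}(1) says $H_{[0]}$ is a coquasi-Hopf subalgebra. By the formulas of Lemma \ref{lem:gr}, the product, coproduct, $\Phi_{\operatorname{gr}H}$, $S_{\operatorname{gr}H}$, $\alpha_{\operatorname{gr}H}$ and $\beta_{\operatorname{gr}H}$ restricted to degree $0$ are exactly those of $H_{[0]}$. For $\pi$, I use that $\operatorname{gr}H$ is graded, so the degree-zero projection of a product of homogeneous elements is nonzero only when both factors have degree $0$. Hence $\pi$ is multiplicative and unital. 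It is a coalgebra map because $\Delta_{\operatorname{gr}H}$ is graded, and the degree-zero part of $\Delta(\bar x)$ vanishes for $\deg\bar x>0$. Compatibility with $\Phi$, $\alpha$, $\beta$ holds because these are all concentrated in degree $0$. Compatibility with $S$ holds because $S_{\operatorname{gr}H}$ preserves degree. Finally, $\pi\iota=\operatorname{id}$ is immediate.

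Next I would invoke \cite[Theorem 5.8]{AP13}, which applies because $\operatorname{gr}H$ is a coquasi-Hopf algebra and so has a preantipode by \cite[Theorem 3.10]{AP12}. With $R:=(\operatorname{gr}H)^{co H_{[0]}}$, the right coinvariants with respect to the coaction $(\operatorname{id}\otimes\pi)\Delta$, the theorem gives the following. $R$ is a Hopf algebra in ${}^{H_{[0]}}_{H_{[0]}}\mathcal{YD}$, and the map $R\# H_{[0]}\to\operatorname{gr}H$, $r\#h\mapsto r\iota(h)$, is an isomorphism of coquasi-bialgebras. Because $\pi$ and the coaction are graded, $R=\bigoplus_n R\cap\operatorname{gr}H(n)$ is a graded subspace, so this isomorphism is also graded.

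The remaining point, and the main obstacle, is upgrading this to an isomorphism of coquasi-Hopf algebras, i.e.\ endowing $R\#H_{[0]}$ with a coquasi-antipode. I would transport the coquasi-antipode $(S_{\operatorname{gr}H},\alpha_{\operatorname{gr}H},\beta_{\operatorname{gr}H})$ through the coquasi-bialgebra isomorphism. Since the axioms (\ref{eq:alpha,beta}) and (\ref{eq:Phialphabeta=e}) involve only the coalgebra structure, the multiplication and $\Phi$, all of which are preserved, the transported triple is automatically a coquasi-antipode. The isomorphism then becomes one of coquasi-Hopf algebras, just as claimed for $T(V)\#H\cong\operatorname{CoT}_H(M)$ in Subsection \ref{subsection3.1}. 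The hardest step is really checking that Ardizzoni--Pavarin's hypotheses (a projection with a section in the category of coquasi-bialgebras with preantipode) are met; I expect that to reduce to the degree bookkeeping done above.
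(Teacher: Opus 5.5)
Your proposal is correct and follows the paper's route. The paper's proof is just the one-line appeal to \cite[Theorem 5.8]{AP13}, using preantipodes from \cite[Theorem 3.10]{AP12}, and you supply the omitted check that the degree-zero inclusion and projection form a split pair of coquasi-Hopf morphisms, with $\Phi_{\operatorname{gr}H}$ pulled back from $H_{[0]}$. One small correction: the preantipode that \cite[Theorem 5.8]{AP13} requires is on the base $H_{[0]}$, not on $\operatorname{gr}H$; it is available because $H_{[0]}$ is itself a coquasi-Hopf algebra by Lemma \ref{lem:Majidcoradical}(1).
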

\begin{proof}
Since every coquasi-Hopf algebra admits a preantipode by \cite[Theorem 3.10]{AP12}, the result follows immediately from \cite[Theorem 5.8]{AP13}.
\end{proof}

\begin{remark}\rm
In \cite[Questions I, II, III]{AC13}, Andruskiewitsch and Cuadra outlined a three-step program for the classification of Hopf algebras with injective antipode. This approach is also discussed in Angiono's report at the 2026 International Congress of Mathematicians \cite[Subsection 3.2]{Ang26}. The same strategy can be applied to the classification of coquasi-Hopf algebras:
\begin{itemize}
    \item Classify all coquasi-Hopf algebras $L$ generated by a given cosemisimple coalgebra $C$, with an injective anti-coalgebra map $S: C \to C$ such that $S_L|_C = S$.
    \item Classify all connected graded Hopf algebras $R$ in ${}^L_L \mathcal{YD}$;
    \item Classify all liftings (or deformations) $H$ satisfying $\operatorname{gr} H\cong R\# L.$
\end{itemize}
It is worth noting that, in contrast to the Hopf algebra setting, the first step of the program involves a much wider variety of coquasi-Hopf algebras. We claim that if $H_{[0]}$ is genuine, i.e., not twist equivalent to a Hopf algebra, then $\operatorname{gr}H$ is also genuine. Indeed, the proof is analogous to that of \cite[Corollary 4.19]{Yu26}. Furthermore, in the third step, the computation of liftings becomes more complicated, as one must determine all possible reassociators from those concentrated in degree $0$.
\end{remark}

Next we construct an example of a coquasi-Hopf algebra without the dual Chevalley property.
Inspired by Question 3.3 in Angiono's report at the 2026 International Congress of Mathematicians \cite{Ang26}, we consider, for instance, certain constructions starting from the dual of the $8$-dimensional Radford Hopf algebra $R_{2,2}$ (\cite{Rad75}), and then apply bosonization to construct examples of coquasi-Hopf algebras. We note that the dual of the Radford Hopf algebra can be found in \cite{ACE15}. More precisely, we construct a coquasi-Hopf algebra of the form
$$
\Bbbk\langle y \rangle/(y^2)\#\bigl(R_{2,2}^*\otimes \Bbbk(\mathbb{Z}_2\times \mathbb{Z}_2,\omega)\bigr),
$$
which does not possess the dual Chevalley property, and whose structure is given explicitly as follows.
\begin{example}\rm\label{ex:M(A,X,z,g,y)}
Let $\k$ be an algebraically closed field of characteristic $0$. Let $M(A,X,z,g,y)$ be the $64$-dimensional coquasi-Hopf algebra that is generated as a quasi-algebra by $A, X, z, g, y$, subject to the relations
\begin{eqnarray*}
&A^4=1,\;\; X^2=0,\;\; AX=\sqrt{-1}XA,&\\
&z^2=g^2=1,\;\;zg=gz,\;\; Az=zA,\;\; Xz=zX,\;\; Ag=gA,\;\; Xg=gX,&\\
& y^2=0,\;\; Ay=yA,\;\; Xy=yX,\;\;  gy=-yg, \;\; zy=yz.&
\end{eqnarray*}
Its coalgebra structure, reassociator, and coquasi-antipode are determined respectively by
\begin{eqnarray*}
&&\Delta(A)= A\otimes A+2XA^3\otimes XA,\;\; \varepsilon(A)=1,\;\;S(A)=A^3,\\
&&\Delta(X)=X\otimes1+A^2\otimes X,\;\; \varepsilon(X)=0,\;\;S(X)=XA^2,\\
&&\Delta(z)=z\otimes z,\;\; \Delta(g)=g\otimes g,\;\;\varepsilon(z)=\varepsilon(g)=1,\;\;S(z)=z,\;\;S(g)=g,\\
&&\Delta(y)=y\otimes1+g\otimes y,\;\;\varepsilon(y)=0,\;\;S(y)=-gy,\\
&&\alpha(A^{r_A} X^{r_X} z^{r_z} g^{r_g} y^{r_y})=\delta_{r_X,0} \delta_{r_y,0},\;\; \beta(A^{r_A} X^{r_X} z^{r_z} g^{r_g} y^{r_y})=(-1)^{r_z} \delta_{r_X,0} \delta_{r_y,0},\\
&&\Phi(A^{r_A} X^{r_X} z^{r_z} g^{r_g} y^{r_y}, A^{s_A} X^{s_X} z^{s_z} g^{s_g} y^{s_y}, A^{t_A} X^{t_X} z^{t_z} g^{t_g} y^{t_y})\\
&&=(-1)^{r_z s_z t_z}\delta_{r_X, 0}\delta_{r_y,0}\delta_{s_X, 0}\delta_{s_y,0}\delta_{t_X, 0}\delta_{t_y,0}.
\end{eqnarray*}
In fact, disregarding the reassociator and the coquasi-antipode, the underlying structure of $M(A,X,z,g,y)$ is already a bialgebra. We know that
\begin{eqnarray*}
&&M(A,X,z,g,y)_0\\
&=&\k 1\oplus \k A^2\oplus \k z\oplus \k g \oplus \k A^2 z \oplus \k A^2 g \oplus \k zg\oplus \k A^2 zg \oplus C_1 \oplus C_z \oplus C_g \oplus C_{zg},
\end{eqnarray*}
where the comatrix coalgebras corresponding to $C_1$, $C_z$, $C_g$ and $C_{zg}$ are respectively represented in matrix form as
\begin{eqnarray*}
&\C_1=\begin{pmatrix}
A & \sqrt{2} XA^3 \\
\sqrt{2} XA & A^3
\end{pmatrix},\;\;
\C_z=\begin{pmatrix}
A z& \sqrt{2} XA^3 z\\
\sqrt{2} XAz & A^3z
\end{pmatrix},
\\
&\C_g=\begin{pmatrix}
A g& \sqrt{2} XA^3g \\
\sqrt{2} XAg & A^3g
\end{pmatrix},\;\;
\C_{zg}=\begin{pmatrix}
A zg& \sqrt{2} XA^3 zg\\
\sqrt{2} XA zg& A^3zg
\end{pmatrix},
\end{eqnarray*}
and $C_1$, $C_z$, $C_g$ and $C_{zg}$ are precisely the linear spans of the matrix entries of the corresponding matrices. It follows that $$\dim_{\k} (M(A,X,z,g,y)_0)=24.$$ Evidently, this coquasi-Hopf algebra fails to satisfy the dual Chevalley property; indeed, we have $$M(A,X,z,g,y)_{[0]}=\span\{A^{r_A} X^{r_X} z^{r_z} g^{r_g} \mid 0 \leq r_A \leq 3, 0\leq r_X, r_z, r_g\leq 1\},$$ and therefore $\dim_{\k} M(A,X,z,g,y)_{[0]}=32.$ Moreover, we have $$M(A,X,z,g,y)= M(A,X,z,g,y)_{[1]},$$ where $$M(A,X,z,g,y)_{[1]}/M(A,X,z,g,y)_{[0]}=\span\{A^{r_A} X^{r_X} z^{r_z} g^{r_g} y\mid 0\leq r_A \leq 3, 0\leq r_X, r_z, r_g\leq 1\}.$$
\end{example}

We now consider the relationship between the graded coquasi-Hopf algebra $\operatorname{gr} H$ constructed in Lemma \ref{lem:gr} and the cotensor coquasi-Hopf algebra.
\begin{theorem}\label{thm:embedding}
Let $H$ be a coquasi-Hopf algebra over $\k$ with a coquasi-antipode $(S,\alpha,\beta)$ such that $S(H_0)\subseteq H_0$, and let $\operatorname{gr} H$ denote the graded coquasi-Hopf algebra constructed in Lemma \ref{lem:gr}. Then $\operatorname{CoT}_{H_{[0]}}(H_{[1]}/H_{[0]})$ admits a graded coquasi-Hopf algebra structure, and there is a graded coquasi-Hopf algebra embedding $\psi:\operatorname{gr} H\hookrightarrow \operatorname{CoT}_{H_{[0]}}(H_{[1]}/H_{[0]})$ such that $\psi\mid_{H_{[0]}\oplus H_{[1]}/H_{[0]}}=\id$.
\end{theorem}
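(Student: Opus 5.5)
We first put the target structure in place. By Lemma \ref{lem:gr}, $\operatorname{gr}H$ is a graded coquasi-Hopf algebra whose reassociator $\Phi_{\operatorname{gr}H}$ is concentrated in degree $0$, with $\operatorname{gr}H(0)=H_{[0]}$ and $\operatorname{gr}H(1)=H_{[1]}/H_{[0]}$. Lemma \ref{lem:gradedMajid} then makes $M:=H_{[1]}/H_{[0]}$ an $H_{[0]}$-Majid bimodule. Explicitly, the coactions are $\rho^L=(\pi_0\otimes\pi_1)\Delta$ and $\rho^R=(\pi_1\otimes\pi_0)\Delta$, and the actions are induced by multiplication in $H$. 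By Lemma \ref{lem:Majidcoradical}(1), $H_{[0]}$ is a coquasi-Hopf algebra. Proposition \ref{prop:CotCQH} therefore gives the graded coquasi-Hopf algebra structure on $\operatorname{CoT}_{H_{[0]}}(M)$, which proves the first assertion.

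To construct $\psi$, we apply the universal property in Lemma \ref{lemma:universal} with $X=\operatorname{gr}H$. Take $\psi_0:\operatorname{gr}H\to H_{[0]}$ to be the projection onto degree $0$ and $\psi_1$ to be the projection onto degree $1$. Since $\operatorname{gr}H$ is graded as a coalgebra, $\psi_0$ is a coalgebra map and $\psi_1$ is an $H_{[0]}$-bicomodule map for the induced structure. Set $\psi_n=\psi_1^{\otimes n}\Delta^{n-1}$. On $\operatorname{gr}H(k)$ this vanishes unless $k=n$, so the local finiteness condition holds and $\psi=\sum\psi_n$ is a graded coalgebra map restricting to the identity on $H_{[0]}\oplus M$.

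For injectivity, it suffices, since $\psi$ is a coalgebra map, that its restriction to the coradical be injective (Heyneman--Radford). The coradical of $\operatorname{gr}H$ is $H_0\subseteq H_{[0]}=\operatorname{gr}H(0)$, where $\psi$ is the identity. Alternatively, we argue directly: for $\bar x\in\operatorname{gr}H(n)$ with $n\ge2$, $\psi_n(\bar x)=0$ forces the component of $\Delta(\bar x)$ in $\operatorname{gr}H(1)\otimes\operatorname{gr}H(n-1)$ to vanish after applying $\psi_1\otimes\psi_{n-1}$. Induction then gives $\Delta(x)\in H_{[0]}\otimes H+H\otimes H_{[n-2]}$, i.e.\ $x\in H_{[n-1]}$, so $\bar x=0$. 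This is where the definition of the standard filtration enters.

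The main obstacle is proving that $\psi$ is multiplicative and preserves $\Phi,S,\alpha,\beta$. The plan is to use the uniqueness half of Lemma \ref{lemma:universal}. Both $\psi\circ m_{\operatorname{gr}H}$ and $m'\circ(\psi\otimes\psi)$ are coalgebra maps $\operatorname{gr}H\otimes\operatorname{gr}H\to\operatorname{CoT}_{H_{[0]}}(M)$, because multiplications in coquasi-bialgebras are coalgebra maps. It therefore suffices to check that their degree $0$ and degree $1$ projections agree, which reduces to the definition of the Majid bimodule structure on $M$ via multiplication. The reassociators, $\alpha$, and $\beta$ are all concentrated in degree $0$ on both sides and agree there, since $\psi$ is the identity on degree $0$.

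For the antipode, both $\psi S_{\operatorname{gr}H}$ and $S'\psi$ are coalgebra maps $(\operatorname{gr}H)^{cop}\to\operatorname{CoT}_{H_{[0]}}(M)$, so again only degrees $0$ and $1$ need checking. In degree $0$ they agree trivially. In degree $1$ one must show that $S_{\operatorname{gr}H}$ restricted to $M$ coincides with the map $T$ from the proof of Proposition \ref{prop:CotCQH}. Our approach is to note that both satisfy $\operatorname{id}*\beta*S=\beta 1$ on elements $h\cdot m$ with $m\in{}^{co H_{[0]}}M$, since $\beta$ vanishes in degree $1$, and then to solve this equation for the degree $1$ part of $S$. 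Writing it out on $h\cdot m$ gives $h_1\cdot S(\ldots)=-\beta(h_2m_1)(h_1\cdot m_0)\cdot S(h_3m_2)$, which determines $S(h\cdot m)$ uniquely via the argument already used for $T$. If this uniqueness argument proves delicate, we would instead verify directly that $S(h\cdot m)$ in $\operatorname{gr}H$ equals $T(h\cdot m)$, using \eqref{eq:S(hg)} in $\operatorname{gr}H$ together with $S(m)=-\beta(m_1)m_0\cdot S(m_2)$ modulo $H_{[0]}$ for coinvariant $m$.
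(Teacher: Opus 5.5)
Your proof is correct and follows the paper's skeleton:
\begin{itemize}
\item Lemma~\ref{lem:gradedMajid} and Proposition~\ref{prop:CotCQH} give the target structure;
\item the two canonical projections, fed into Lemma~\ref{lemma:universal}, build $\psi$;
\item the uniqueness half of that lemma, checked in degrees $0$ and $1$ only, gives compatibility with $m$ and $S$;
\item $\Phi,\alpha,\beta$ are handled by concentration in degree $0$.
\end{itemize}
It differs from the paper in two places.

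\textbf{Injectivity.} The paper applies Heyneman--Radford in its correct form. It uses $(\operatorname{gr}H)_1\subseteq(\operatorname{gr}H)_{[1]}=\operatorname{gr}H(0)\oplus\operatorname{gr}H(1)$, where $\psi=\operatorname{id}$. Your first sentence instead invokes the theorem for the coradical, and that version is false: the counit $\k[x]\to\k$, with $x$ primitive, is injective on $\k1$ but not injective. Your ``direct'' induction, however, is valid and more elementary. Modulo $H_{[0]}\otimes H+H\otimes H_{[n-2]}$, only the $H_{[1]}\otimes H_{[n-1]}$ part of $\Delta(x)$ survives. So vanishing of the $(1,n-1)$-component forces $x\in H_{[n-1]}$. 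This route avoids needing the description of $(\operatorname{gr}H)_{[n]}$ and of the coradical filtration of $\operatorname{gr}H$.

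\textbf{Antipode.} The paper simply asserts $\psi S_{\operatorname{gr}H}=S'\psi$ from Lemma~\ref{lemma:universal}. You rightly note that this only reduces to $S_{\operatorname{gr}H}|_M=T$, which is not automatic because coquasi-antipodes are not unique. Your ``uniqueness'' route is unsupported, since the paper only verifies that $T$ satisfies the axioms; it never proves $T$ is the unique such map. Your fallback does go through:
\begin{itemize}
\item In $\operatorname{gr}H$, $(\operatorname{id}*\beta*S)(m)=0$ for $m\in{}^{co H_{[0]}}M$ gives $S(m)=-\alpha(1)\beta(m_1)\,m_0\cdot S(m_2)$. You dropped the factor $\alpha(1)=\beta(1)^{-1}$.
\item $f^{\pm1}$ are concentrated in degree $0$, with $f(h,1)=\alpha(1)\varepsilon(h)$.
\item Hence \eqref{eq:S(hg)} yields $S(h\cdot m)=\beta(1)\,S(m_0)\cdot S(h_1)f(h_2,m_1)=T(h\cdot m)$.
\end{itemize}
So your argument makes explicit a step the paper leaves implicit.
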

\begin{proof}
By Lemma \ref{lem:gradedMajid}, $H_{[1]}/H_{[0]}$ admits an $H_{[0]}$-Majid bimodule structure. Then, by Proposition \ref{prop:CotCQH}, the cotensor coquasi-Hopf algebra $\operatorname{CoT}_{H_{[0]}}(H_{[1]}/H_{[0]})$ is a graded coquasi-Hopf algebra, with the structure defined in the proof of Proposition \ref{prop:CotCQH}.
We first construct a coalgebra map
$$
\psi:\operatorname{gr} H\hookrightarrow \operatorname{CoT}_{H_{[0]}}(H_{[1]}/H_{[0]}).
$$
According to Lemma \ref{lemma:universal}, it suffices to construct a coalgebra map $\psi_0:\operatorname{gr} H\to H_{[0]}$ and a $H_{[0]}$-bicomodule map $\psi_1:\operatorname{gr} H\to H_{[1]}/H_{[0]}$ such that for each $x\in \operatorname{gr} H$, only finitely many $\psi_i(x)$ are nonzero, where $\psi_i$ is defined as in Lemma \ref{lemma:universal}. We take $\psi_0$ and $\psi_1$ to be the canonical projections
$$
\psi_0:\operatorname{gr}H\rightarrow H_{[0]}, \;\;
\psi_1:\operatorname{gr}H\rightarrow H_{[1]}/H_{[0]}.
$$
Clearly, $\psi_n(H_{[m]}/H_{[m-1]})=0$ for $n\neq m$. Hence
$$
\psi=\sum_i \psi_i:\operatorname{gr}H\rightarrow \operatorname{CoT}_{H_{[0]}}(H_{[1]}/H_{[0]})
$$
is a graded coalgebra map. Lemma \ref{lem:gr} yields
$$
(\operatorname{gr} H)_{[n]}=\bigoplus_{i\leq n} \operatorname{gr} H(i).
$$
By Lemma \ref{lem:Majidcoradical}, we have $(\operatorname{gr} H)_n\subseteq (\operatorname{gr} H)_{[n]}$, which implies that $\psi$ restricted to $(\operatorname{gr} H)_1$ is injective. Consequently, $\psi$ is injective by \cite[Theorem 5.3.1]{Mon93}. To establish that $\psi$ is a morphism of coquasi-Hopf algebras, we first note that Lemma \ref{lemma:universal} implies $\psi m_{\operatorname{gr} H}= m^\prime(\psi\otimes \psi)$ and $\psi S_{\operatorname{gr} H}=S^\prime\psi$. The compatibility with $\alpha^\prime$, $\beta^\prime$, and $\Phi^\prime$ is a direct consequence of their construction. This completes the proof.
\end{proof}
\subsection{The bijectivity of the coquasi-antipode}
Let $H$ be a coquasi-Hopf algebra over $\k$ with bijective coquasi-antipode $S$. The category $\mathscr{M}^H$ of finite-dimensional right $H$-comodules is a tensor category. For $V \in \mathscr{M}^H$, its left dual $V^*$ is the dual vector space endowed with the right $H$-coaction $\rho^R_{V^*}(f)=f_0\otimes f_1$ determined by
$$
\langle f_0,v\rangle f_1=\langle f,v_0\rangle S(v_1)
$$
for all $v\in V$ and $f\in V^*$. The right dual of $V$ is defined similarly by replacing $S$ with $S^{-1}$ in the formula above. Hence, we aim to determine whether the coquasi-antipode of a coquasi-Hopf algebra is bijective.

We first show that if the coquasi-antipode of $H$ is bijective, then so is that of $\operatorname{CoT}_H(M)$.
\begin{lemma}\label{lem:antipodebijective}
Let $H$ be a coquasi-Hopf algebra over $\k$ with a bijective coquasi-antipode $(S, \alpha, \beta)$, and let $M$ be an $H$-Majid bimodule. Then the coquasi-antipode $(S^\prime, \alpha^\prime, \beta^\prime)$ of $\operatorname{CoT}_H(M)$ constructed in the proof of Proposition \ref{prop:CotCQH} is bijective.
\end{lemma}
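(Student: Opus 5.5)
The plan is to exploit the grading. By construction $S'$ is homogeneous of degree $0$ with respect to (\ref{eq:Cotgrading}): $S'_n$ vanishes on $M^{\Box m}$ for $m\neq n$. Hence $S'$ is bijective if and only if each restriction $S'|_{M^{\Box n}}:M^{\Box n}\to M^{\Box n}$ is bijective. On $M^{\Box 0}=H$ we have $S'|_H=S$, which is bijective by hypothesis. For $n\geq 2$, Lemma \ref{lemma:universal} (applied to $X=(\operatorname{CoT}_H(M))^{cop}$) gives $S'|_{M^{\Box n}}$ as $(S_1')^{\otimes n}$ composed with the iterated coproduct of $X$. On $M^{\Box n}$ the relevant component of $\Delta^{n-1}_{cop}$ is just the reversal $m^1\otimes\cdots\otimes m^n\mapsto m^n\otimes\cdots\otimes m^1$. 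Therefore
$$S'(m^1\otimes\cdots\otimes m^n)=T(m^n)\otimes\cdots\otimes T(m^1).$$
So everything reduces to showing that $T:M\to M$ is bijective. Once that is known, the map $m^1\otimes\cdots\otimes m^n\mapsto T^{-1}(m^n)\otimes\cdots\otimes T^{-1}(m^1)$ is an inverse on $M^{\otimes n}$. It preserves $M^{\Box n}$ because $T$ is an $H$-bicomodule map from $M$ to $M$ with both coactions twisted by $S$ and sides swapped, and $S$ is bijective, so $T^{-1}$ is a bicomodule map with the coactions twisted by $S^{-1}$.

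For bijectivity of $T$, use the decomposition $M\cong H\otimes {}^{coH}M$, $h\otimes m\mapsto h\cdot m$, from \cite[Proposition 2.16]{Yu26}, and write $V:={}^{coH}M$. By definition,
$$T(h\cdot m)=-\beta(m_1)\,(m_0\cdot S(m_2))\cdot S(h_1)f(h_2,m_3).$$
Note that here $m_{-1}=1$, and $m_0\otimes m_1$ denotes the right coaction. I would first factor $T$ as a composite of simpler maps. Define $\tau:V\to M$ by $\tau(m)=\beta(m_1)\,m_0\cdot S(m_2)$. This is the standard projection onto the right coinvariants $M^{coH}$, and it should be a bijection $V\to M^{coH}$ (the analogue of ${}^{coH}M\cong M^{coH}$ from Lemma \ref{lem:Majidmod=YD}). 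Then $M\cong M^{coH}\otimes H$ via $n\otimes k\mapsto n\cdot k$. So, up to the invertible convolution factor $f$ and the reassociator correction, $T$ is the composite
$$H\otimes V\xrightarrow{\ \text{flip},\ S\ } V\otimes H\xrightarrow{\ \tau\otimes\id\ } M^{coH}\otimes H\xrightarrow{\ \cong\ } M.$$
Each step is bijective when $S$ is bijective.

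To make the factorization rigorous, I would write $T=\Theta\circ(\tau\otimes S)\circ\mathrm{flip}$, where $\Theta(n\otimes k)=(n\cdot k)$ twisted by $f$. Concretely, $\Theta$ is obtained by inserting the convolution-invertible $f$ through the left coaction on $n$ and the coproduct on $S^{-1}(k)$. The twist by $f$ is undone by twisting with $f^{-1}$, because $f$ is convolution invertible and all coactions involved are explicit. The main obstacle is exactly this step: checking that $\Theta$ is invertible. This means keeping careful track of the fact that $f(h_2,m_3)$ couples the $H$-factor and the $V$-factor through $S$, which requires $S^{-1}$ to re-express $h_2$ in terms of the coproduct of $S(h)$. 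My first attempt would be to exhibit the inverse explicitly as
$$n\cdot k\mapsto -\,f^{-1}\bigl(S^{-1}(k_2),\,\cdot\,\bigr)\,S^{-1}(k_1)\cdot\tau^{-1}(n),$$
and verify this by a filtration or degree argument on the coradical filtration of $H$ if the direct computation becomes unwieldy.
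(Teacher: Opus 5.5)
Your reduction is correct and has the same structure as the paper's proof. $S'$ preserves degrees and acts on $M^{\Box n}$ by $m^1\otimes\cdots\otimes m^n\mapsto T(m^n)\otimes\cdots\otimes T(m^1)$, and $T^{-1}$ intertwines the coactions twisted by $S^{-1}$. So everything reduces to bijectivity of $T$. The paper packages the same idea by building $S''$ from $S^{-1}$ and an explicit $T'$ via Lemma~\ref{lemma:universal}.

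\textbf{The gap.} It lies in the bijectivity of $T$. The hypothesis that $S$ is bijective enters exactly through your claim that $\tau(m)=\beta(m_1)\,m_0\cdot S(m_2)$ is a bijection ${}^{coH}M\to M^{coH}$, and you only assert this (``should be''). Lemma~\ref{lem:Majidmod=YD} does not provide it; it only gives $M^{coH}\otimes H\cong M$. Nor is it a general fact. Take a Hopf algebra $H$ and the Hopf bimodule $M=H\otimes H$ with diagonal actions, $\rho^L=\Delta\otimes\id$ and $\rho^R=\id\otimes\Delta$. Then ${}^{coH}M=\k1\otimes H$, $M^{coH}=H\otimes\k1$, and $\tau(1\otimes b)=S(b)\otimes 1$, so $\tau$ is literally $S$.

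\textbf{The missing ingredient.} You need an explicit inverse:
\[
\sigma:M^{coH}\to{}^{coH}M,\qquad \sigma(n)=\alpha\bigl(S^{-1}(n_{-1})\bigr)\,n_0\cdot S^{-1}(n_{-2}),
\]
where $n_{-2}\otimes n_{-1}\otimes n_0=(\Delta\otimes\id)\rho^L(n)$. Each required property reduces to the axioms:
\begin{itemize}
\item $\sigma(n)$ is left coinvariant by \eqref{eq:alpha,beta} evaluated at $S^{-1}(y)$.
\item For $\sigma\tau=\id$, reassociate $(m_0\cdot S(m_2))\cdot m_4$ using the right-action axiom. The factor $\Phi(m_{-1},g_1,h_1)$ there is trivial, because ${}^{coH}M$ is a right subcomodule consisting of left coinvariants. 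Then apply \eqref{eq:alpha,beta} and the first equality of \eqref{eq:Phialphabeta=e}.
\item For $\tau\sigma=\id$, reassociate $(n_0\cdot S^{-1}(n_{-2}))\cdot n_{-4}$ in the form $(x\cdot g)\cdot h=\Phi^{-1}(x_{-1},g_1,h_1)\,x_0\cdot(g_2h_2)$ for $x\in M^{coH}$. Then apply \eqref{eq:alpha,beta} and the second equality of \eqref{eq:Phialphabeta=e}, both evaluated at $S^{-1}(y)$.
\end{itemize}

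\textbf{The step you flagged as the main obstacle is harmless.} Place the twist on the source side instead. Since ${}^{coH}M$ is a right subcomodule, $\Psi(h\otimes m)=h_1\otimes m_0\,f(h_2,m_1)$ is an automorphism of $H\otimes{}^{coH}M$, with inverse $h\otimes m\mapsto h_1\otimes m_0\,f^{-1}(h_2,m_1)$. Moreover
\[
T(h\cdot m)=-\mu\bigl((\tau\otimes S)\,\mathrm{flip}\,\Psi(h\otimes m)\bigr),\qquad \mu(n\otimes k)=n\cdot k .
\]
No $S^{-1}$ and no filtration argument are needed at this stage. Inverting the factors gives
\[
T^{-1}(n\cdot k)=-f^{-1}\bigl(S^{-1}(k_1),S^{-1}(n_{-3})\bigr)\,\alpha\bigl(S^{-1}(n_{-1})\bigr)\,S^{-1}(k_2)\cdot\bigl(n_0\cdot S^{-1}(n_{-2})\bigr).
\]
This is exactly the paper's $T'$. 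Your tentative inverse formula has $k_1$ and $k_2$ interchanged.
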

\begin{proof}
For our purposes, we now construct a morphism
$$S^{\prime\prime}: (\operatorname{CoT}_H(M))^{cop} \rightarrow \operatorname{CoT}_H(M)$$
satisfying
$S^{\prime\prime}S^\prime = S^\prime S^{\prime\prime} = \mathrm{id}$.
By \cite[Proposition 2.16]{Yu26}, we have
$$
 M{}^{co H} \otimes H\cong M,\;\; m\otimes h\mapsto m\cdot h,
$$
where $M{}^{co H}= \{m\in M\mid \rho_M^R(m)=m\otimes 1_H\}$. Following the same argument as in Proposition \ref{prop:CotCQH}, we first specify the action of $T^\prime$ on elements $n\cdot h$ with $h\in H$ and $n\in M^{co H}$ by
$$
T^\prime(n\cdot h):=-f^{-1}(S^{-1}(h_1), S^{-1}(n_{-3})) \alpha(S^{-1}(n_{-1})) S^{-1}(h_2)\cdot (n_{0} \cdot S^{-1}(n_{-2})).
$$
Let $X=(\operatorname{CoT}_H(M))^{cop}$, and define $S^{\prime\prime}_0$ as the composition
$$
X\xrightarrow{\pi_0} H\xrightarrow{S^{-1}}H.
$$
We denote the composition
$$
X\xrightarrow{\pi_1}M\xrightarrow{T^\prime}M
$$
by $S^{\prime\prime}_1$.
For $n\geq 2, $ define $S^{\prime\prime}_n$ as the composition
$$
(\operatorname{CoT}_H(M))^{cop}\xrightarrow{\Delta^{n-1}} ((\operatorname{CoT}_H(M))^{cop})^{\otimes n} \xrightarrow{(S^{\prime\prime}_1)^{\otimes n}} M^{\otimes n}.
$$
By Lemma \ref{lemma:universal}, one obtains a coalgebra map
$$
S^{\prime\prime} = \sum_{i=0} S^{\prime\prime}_i: (\operatorname{CoT}_H(M))^{cop}\rightarrow \operatorname{CoT}_H(M).
$$
A similar argument as in the proof of \cite[Proposition 4.20]{Yu26} shows that $$S^{\prime\prime}S^\prime=S^\prime S^{\prime\prime}=\operatorname{id}.$$
\end{proof}

\begin{lemma}\label{lem:H[0]Sinj}
Let $H$ be a coquasi-Hopf algebra over $\k$ with a coquasi-antipode $(S,\alpha,\beta)$ such that $S(H_0)\subseteq H_0$. If $S\mid_{H_{[0]}}$ is bijective, then $S$ is injective.
\end{lemma}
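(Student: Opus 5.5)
We prove injectivity of $S$ by passing to the associated graded object and embedding it into a cotensor coquasi-Hopf algebra. The tools are Theorem \ref{thm:embedding}, Lemma \ref{lem:antipodebijective}, and the standard fact that a coalgebra (anti)morphism which is injective on a suitable low filtration piece is injective everywhere.

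First consider $\operatorname{gr}H$ from Lemma \ref{lem:gr}, with $S_{\operatorname{gr}H}(\bar x)=S(x)+H_{[m-1]}$. By Theorem \ref{thm:embedding} there is a graded coquasi-Hopf embedding
$$\psi:\operatorname{gr}H\hookrightarrow \operatorname{CoT}_{H_{[0]}}(H_{[1]}/H_{[0]})$$
with $\psi S_{\operatorname{gr}H}=S'\psi$. Here $S'$ is the coquasi-antipode of Proposition \ref{prop:CotCQH}, built from $S|_{H_{[0]}}$. Since $S|_{H_{[0]}}$ is bijective, Lemma \ref{lem:antipodebijective} (applied to the coquasi-Hopf algebra $H_{[0]}$ and the Majid bimodule $H_{[1]}/H_{[0]}$) shows that $S'$ is bijective. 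Hence $S'\psi=\psi S_{\operatorname{gr}H}$ is injective, so $S_{\operatorname{gr}H}$ is injective.

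Next we lift injectivity from $\operatorname{gr}H$ to $H$ by induction on the standard filtration, using Lemma \ref{lem:Majidcoradical}(2) to know that $S(H_{[n]})\subseteq H_{[n]}$. The base case $n=0$ is the hypothesis. Suppose $S|_{H_{[n-1]}}$ is injective and $x\in H_{[n]}$ satisfies $S(x)=0$. Then $S_{\operatorname{gr}H}(\bar x)=0$ in $H_{[n]}/H_{[n-1]}$. Injectivity of $S_{\operatorname{gr}H}$ in degree $n$ gives $x\in H_{[n-1]}$, and the induction hypothesis gives $x=0$. Since $H=\bigcup_n H_{[n]}$ (as $H_n\subseteq H_{[n]}$ and the coradical filtration is exhaustive), $S$ is injective.

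Alternatively, and more simply, $S$ is an anti-coalgebra map, so $\ker S$ is a coideal. Any nonzero coideal-type kernel of a coalgebra map must meet $H_1$ nontrivially (\cite[Theorem 5.3.1]{Mon93}, applied to $S:H\to H^{cop}$). Therefore it suffices to show $S|_{H_1}$ is injective, and $H_1\subseteq H_{[1]}$. On $H_{[1]}$ the filtration argument above needs only degrees $0$ and $1$. There $\psi$ is the identity, and $S'$ restricted to $H_{[1]}/H_{[0]}$ is the map $T$ of Proposition \ref{prop:CotCQH}, which is injective by Lemma \ref{lem:antipodebijective}.

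The main obstacle is checking that the induced degree-one map $S_{\operatorname{gr}H}$ on $H_{[1]}/H_{[0]}$ really coincides with $T$, i.e.\ with $S'$ restricted to degree one. Theorem \ref{thm:embedding} asserts this compatibility through the universal property. Still, one must confirm that $S'$ was constructed from $S|_{H_{[0]}}$ and the Majid bimodule structure of Lemma \ref{lem:gradedMajid} so that the identity $\psi S_{\operatorname{gr}H}=S'\psi$ holds on degree one. I would verify this first on elements $h\cdot m$ with $m\in{}^{coH_{[0]}}(H_{[1]}/H_{[0]})$, using formula (\ref{eq:S(hg)}).
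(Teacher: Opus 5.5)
Your proof is correct and follows the paper's route: Theorem \ref{thm:embedding} and Lemma \ref{lem:antipodebijective} give injectivity of $S_{\operatorname{gr}H}$, and this is then lifted to $S$. Your deduction of that injectivity directly from $\psi S_{\operatorname{gr}H}=S'\psi$ (with $\psi$ injective and $S'$ bijective) is slightly more direct than the paper's use of \cite[Theorem 5.3.1]{Mon93} at the graded level. Your induction on $\{H_{[n]}\}$ spells out the step the paper compresses into \emph{Consequently}, and the degree-one compatibility you flag is exactly what Theorem \ref{thm:embedding} supplies.
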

\begin{proof}
According to Theorem \ref{thm:embedding}, there is a graded coquasi-Hopf algebra embedding $\psi:\operatorname{gr} H\hookrightarrow \operatorname{CoT}_{H_{[0]}}(H_{[1]}/H_{[0]})$ such that $\psi\mid_{H_{[0]}\oplus H_{[1]}/H_{[0]}}=\id$. Then by Lemma \ref{lem:antipodebijective}, the coquasi-antipode $(S^\prime, \alpha^\prime, \beta^\prime)$ of $\operatorname{CoT}_{H_{[0]}}(H_{[1]}/H_{[0]})$ constructed in the proof of Proposition \ref{prop:CotCQH} is bijective. This means that $\operatorname{gr}H$ admits a coquasi-antipode $S_{\operatorname{gr} H}$ that is bijective on $H_{[0]}\oplus H_{[1]}/H_{[0]}$. Lemma \ref{lem:Majidcoradical} gives $(\operatorname{gr} H)_n\subseteq (\operatorname{gr} H)_{[n]}$; therefore the restriction of $S_{\operatorname{gr} H}$ to $(\operatorname{gr} H)_1$ is injective. It follows from \cite[Theorem 5.3.1]{Mon93} that $S_{\operatorname{gr} H}$ is injective. Consequently, $S$ is injective.
\end{proof}

Recall that a \textit{ring category} over $\k$ is a locally finite $\k$-linear abelian monoidal category $\mathcal{C}$ with a bilinear and biexact tensor product such that $\operatorname{End}_{\mathcal{C}}(\mathbbm{1})\cong \k$. Here, by a \textit{locally finite category} \cite[Definition 1.8.1]{EGNO15}, we mean one whose morphism spaces are finite-dimensional and in which every object has finite length. A locally finite category is said to be \textit{finite} \cite[Definition 1.8.6]{EGNO15} if it has enough projective objects and finitely many isomorphism classes of simple objects. It is clear that the category of finite-dimensional comodules of a coquasi-Hopf algebra ( not necessarily finite-dimensional) forms a ring category whose objects all possess left duals.

Let $\mathscr{C}$ be a ring category, and let
$$
0 \to X \xrightarrow{\iota} Z \xrightarrow{\pi} Y \to 0
$$
be a short exact sequence in $\mathscr{C}$. Etingof and Penneys \cite[Lemma 4.2]{EP26} proved that, assuming $\mathscr{C}$ is braided, if $X$ and $Y$ have both left and right duals, then so does $Z$. They also noted that the use of the braiding in the proof of \cite[Lemma 4.2]{EP26} is not essential (see \cite[Remark 4.4]{EP26}). For details on braided categories, see \cite{EGNO15}. In \cite[Appendix A]{SY26}, Shimizu and Yadav showed that \cite[Lemma 4.2]{EP26} remains valid in the above form without assuming a braiding, by means of a careful examination of the original proof. Later, they \cite[Theorem A.2]{SY26} proved that if $\mathscr{C}$ is a ring category in which every object has finite length and every simple object of $\mathscr{C}$ has both left and right duals, then so does every object of $\mathscr{C}$.

From the above analysis, we obtain the following proposition.
\begin{theorem}\label{thm:H[0]Sbijiective}
Let $H$ be a coquasi-Hopf algebra over $\k$ with a coquasi-antipode $(S,\alpha,\beta)$ such that $S(H_0)\subseteq H_0$. Then $S$ is bijective if and only if $S\mid_{H_{[0]}}$ is bijective.
\end{theorem}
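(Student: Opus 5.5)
The ``only if'' direction is quick. By Lemma \ref{lem:Majidcoradical}(1), $S(H_{[0]})\subseteq H_{[0]}$, so $S|_{H_{[0]}}$ is an injective endomorphism of $H_{[0]}$. I would show surjectivity onto $H_{[0]}$ as follows. Since $S$ is bijective, $S^{-1}$ is a coalgebra antimorphism, and it maps simple subcoalgebras to simple subcoalgebras. Hence $S(H_0)=H_0$: an injective map of $H_0$ into itself sending each simple summand onto a simple summand, whose inverse does the same. Every element of $H_{[0]}$ is a sum of products of elements of $H_0$ (bracketed in some way). Relation (\ref{eq:S(hg)}) writes $S(g)S(h)$ as $f(h_1,g_1)S(h_2g_2)f^{-1}(h_3,g_3)$, and $f,f^{-1}$ are convolution invertible. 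Conversely it expresses $S(hg)$ through $f^{-1}*(S(g)S(h))*f$, where $S(g)S(h)$ here stands for the map $h\otimes g\mapsto S(g)S(h)$. By induction on the length of products, $S(H_{[0]})$ therefore contains every product of elements of $S(H_0)=H_0$, so $S(H_{[0]})=H_{[0]}$.

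For the ``if'' direction, Lemma \ref{lem:H[0]Sinj} already gives that $S$ is injective, so only surjectivity remains. Injectivity by itself does not give surjectivity in infinite dimension, so I would use the categorical route announced before the statement. Consider the ring category $\mathscr{C}$ of finite-dimensional right $H$-comodules. Every object has left duals given by $S$, and every object has finite length. The plan has three steps.
\begin{enumerate}[(i)]
\item Show that each simple object has a right dual. A simple comodule $V$ is a comodule over a simple subcoalgebra $C\subseteq H_0\subseteq H_{[0]}$, and $H_{[0]}$ is a coquasi-Hopf subalgebra with bijective antipode $S|_{H_{[0]}}$. Thus $V$ has a right dual in $\mathscr{M}^{H_{[0]}}$, built with $(S|_{H_{[0]}})^{-1}$. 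Its evaluation and coevaluation are $H_{[0]}$-colinear, hence $H$-colinear, so $V$ has a right dual in $\mathscr{C}$.
\item Apply \cite[Theorem A.2]{SY26}: every object of $\mathscr{C}$ then has a right dual.
\item Translate back: the existence of right duals for all finite-dimensional comodules forces $S$ to be bijective.
\end{enumerate}

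The main obstacle is step (iii), passing from right duals to bijectivity of $S$. My plan here is to use the fact that $H$ is the union of finite-dimensional subcoalgebras $D=C(V)$, each the coefficient coalgebra of some finite-dimensional comodule $V$. If $V$ has a right dual ${}^*V$, the coefficients of ${}^*V$ span a finite-dimensional subcoalgebra $D'$. The duality axioms, twisted by $\alpha,\beta,\Phi$ (using (\ref{eq:alpha,beta}) and (\ref{eq:Phialphabeta=e})), should show that $({}^*V)^*\cong V$ as comodules. Then $D=C(V)=S(D')$, because the left dual of any comodule $W$ has coefficient coalgebra $S(C(W))$. Hence $D\subseteq \operatorname{Im}S$ for every such $D$, so $S$ is surjective.

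Verifying carefully that the left dual of a right dual recovers $V$ in the coquasi setting, where duals are only unique up to isomorphism and evaluations involve $\alpha,\beta$, is the delicate computation. I would handle it abstractly using uniqueness of duals in monoidal categories rather than by formulas. Alternatively, one could use the injectivity of $S$ from Lemma \ref{lem:H[0]Sinj}: since $S(D')\subseteq D$ and $\dim D'=\dim D$ (as $D'$ is the coopposite of the coefficient coalgebra of the dual), injectivity gives $S(D')=D$ directly.
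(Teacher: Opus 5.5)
Your proposal is correct and follows the paper's proof of the ``if'' direction step by step: injectivity from Lemma \ref{lem:H[0]Sinj}, right duals of simple comodules obtained from the tensor category $\mathscr{M}^{H_{[0]}}$, extension to all finite-dimensional comodules by \cite[Theorem A.2]{SY26}, and then $({}^*V)^*\cong V$ (by uniqueness of left duals) to place every finite-dimensional subcoalgebra inside $S(H)$. The one difference is that you justify the ``only if'' direction, which the paper calls trivial, by proving $S(H_{[0]})=H_{[0]}$ from $S(H_0)=H_0$ and relation (\ref{eq:S(hg)}); that argument is sound.
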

\begin{proof}
Since the ``only if'' part is trivial, it remains only to prove the ``if'' part.
By Lemma \ref{lem:H[0]Sinj}, it suffices to prove that $S$ is surjective. Note that the category $\mathscr{M}^H$ of finite-dimensional right $H$-comodules is a ring category in which every object has a left dual. We claim that every object of $\mathscr{M}^H$ has a right dual. In fact, since $S\mid_{H_{[0]}}$ is bijective, it follows that $\mathscr{M}^{H_{[0]}}$ is a tensor category. By Lemma \ref{lem:Majidcoradical}(2), we have $H_0\subseteq H_{[0]}$; consequently, every simple right $H$-comodule has a right dual. According to \cite[Theorem A.2]{SY26}, every finite-dimensional right $H$-comodule has a right dual. If we regard every finite-dimensional subcoalgebra $C$ of $H$ as a right $H$-comodule via the comultiplication, then, by \cite[Remark 2.10.3]{EGNO15}, we have
$$
({}^*C)^* \cong C.
$$
This implies that $C\subseteq S(H)$. By the fundamental theorem of coalgebras \cite[Theorem 1.4.7]{DNR01}, every element of $H$ is contained in a finite-dimensional subcoalgebra. Thus $S$ is surjective; together with its injectivity, this implies that $S$ is bijective.
\end{proof}
According to \cite[Corollary 4.4]{BC03}, a coquasi-antipode of a finite-dimensional coquasi-Hopf algebra is bijective. Thus we have
\begin{corollary}
Let $H$ be a coquasi-Hopf algebra over $\k$ with a coquasi-antipode $(S,\alpha,\beta)$ such that $S(H_0)\subseteq H_0$. If $H_{[0]}$ is finite-dimensional, then $S$ is bijective.
\end{corollary}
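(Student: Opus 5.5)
The plan is to reduce the statement directly to Theorem \ref{thm:H[0]Sbijiective}: it suffices to show that the restriction $S\mid_{H_{[0]}}$ is bijective. By Lemma \ref{lem:Majidcoradical}(1), the hypothesis $S(H_0)\subseteq H_0$ ensures that $H_{[0]}$ is a coquasi-Hopf subalgebra of $H$. Thus $S(H_{[0]})\subseteq H_{[0]}$, and $(S\mid_{H_{[0]}},\alpha\mid_{H_{[0]}},\beta\mid_{H_{[0]}})$ is a coquasi-antipode of $H_{[0]}$. The restrictions satisfy (\ref{eq:alpha,beta}) and (\ref{eq:Phialphabeta=e}) because $H_{[0]}$ is a subcoalgebra, so all Sweedler components of an element of $H_{[0]}$ stay in $H_{[0]}$. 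The reassociator restricts as well, and $S\mid_{H_{[0]}}$ is still a coalgebra antimorphism.

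Next, since $H_{[0]}$ is by assumption a finite-dimensional coquasi-Hopf algebra, \cite[Corollary 4.4]{BC03} says its coquasi-antipode is bijective. Hence $S\mid_{H_{[0]}}:H_{[0]}\to H_{[0]}$ is bijective. One point deserves a check: bijectivity should be independent of the chosen coquasi-antipode. It is, because \cite[Corollary 4.4]{BC03} applies to any coquasi-antipode of a finite-dimensional coquasi-Hopf algebra, so in particular to the restricted one.

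Finally, we apply Theorem \ref{thm:H[0]Sbijiective} with the same hypothesis $S(H_0)\subseteq H_0$. It gives that $S$ is bijective on all of $H$.

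The only potential obstacle is making sure that the cited finite-dimensional result applies to the restricted structure on $H_{[0]}$, that is, that $H_{[0]}$ really is a coquasi-Hopf algebra in its own right. Lemma \ref{lem:Majidcoradical}(1) secures this, so no further computation is needed.
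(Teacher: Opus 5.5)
Your proposal is correct and is essentially the paper's argument. The paper applies \cite[Corollary 4.4]{BC03} to the finite-dimensional coquasi-Hopf subalgebra $H_{[0]}$ and then invokes Theorem \ref{thm:H[0]Sbijiective}; your explicit check that the restricted triple $(S\mid_{H_{[0]}},\alpha\mid_{H_{[0]}},\beta\mid_{H_{[0]}})$ is a coquasi-antipode of $H_{[0]}$ supplies a detail the paper leaves implicit.
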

\begin{corollary}
Let $H$ be a coquasi-Hopf algebra over $\k$, and let $H^\prime$ be a coquasi-Hopf subalgebra of $H$ containing $H_0$. If $S\mid_{H^\prime}$ is bijective, then $S$ is bijective.
\end{corollary}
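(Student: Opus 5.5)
The plan is to reduce the corollary to Theorem \ref{thm:H[0]Sbijiective}. That theorem needs two things: the hypothesis $S(H_0)\subseteq H_0$, and bijectivity of $S$ on $H_{[0]}$.

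\emph{First step: show $S(H_0)\subseteq H_0$.} Since $H'$ is a coquasi-Hopf subalgebra, $S(H')\subseteq H'$. By assumption $S|_{H'}$ is bijective, so in particular injective. An injective coalgebra antimorphism $H'\to H'$ sends each simple subcoalgebra $C\subseteq H_0\subseteq H'$ to a subcoalgebra $S(C)\cong C^{cop}$, which is again simple. Hence $S(C)\subseteq H_0$, exactly as in Remark \ref{rm:S(H0)<H0}. Thus $S(H_0)\subseteq H_0$, and the standard filtration together with Lemma \ref{lem:Majidcoradical} is available.

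\emph{Second step: show $S|_{H_{[0]}}$ is bijective.} By Lemma \ref{lem:Majidcoradical}(1), $H_{[0]}$ is the coquasi-Hopf subalgebra generated by $H_0$. Since $H'$ is a coquasi-Hopf subalgebra containing $H_0$, we get $H_{[0]}\subseteq H'$, and injectivity of $S$ on $H_{[0]}$ is inherited from $H'$. For surjectivity onto $H_{[0]}$, I would apply Theorem \ref{thm:H[0]Sbijiective} to the coquasi-Hopf algebra $H'$ itself rather than arguing directly. Its coradical is $H'_0=H'\cap H_0=H_0$, so its Majid coradical is $H'_{[0]}=H_{[0]}$. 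This reformulation is circular, however, so surjectivity has to be obtained by a different route.

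For that route I would argue as follows. The inverse $(S|_{H'})^{-1}$ is a coalgebra antimorphism of $H'$, so it maps simple subcoalgebras to simple subcoalgebras. Hence $S$ permutes the simple subcoalgebras of $H_0$, and $S(H_0)=H_0$. Next, formula (\ref{eq:S(hg)}) gives
\[
S(hg)=f^{-1}(h_1,g_1)\,S(g_2)S(h_2)\,f(h_3,g_3).
\]
Applying the same formula to $S^{-1}$ inside $H'$ shows that the preimage $S^{-1}(H_{[0]})\cap H'$ is closed under multiplication and contains $H_0$ and $1$. Therefore it contains $H_{[0]}$, which gives $H_{[0]}\subseteq S(H_{[0]})$.

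\emph{Conclusion and main obstacle.} With both steps in hand, Theorem \ref{thm:H[0]Sbijiective} yields that $S$ is bijective. I expect the main obstacle to be the closure argument in the second step, namely showing that $S^{-1}(H_{[0]})$ is closed under products. This requires rewriting (\ref{eq:S(hg)}) as
\[
S^{-1}(ab)=f'(\ldots)\,S^{-1}(b)\,S^{-1}(a)\,f'^{-1}(\ldots)
\]
with $S^{-1}$ taken on $H'$, and checking that the twisting coefficients only involve Sweedler components, which stay inside the relevant subcoalgebras. If this step proves delicate, I would instead invoke Remark \ref{rm:S(H0)<H0} together with the dual statement, producing a coquasi-antipode for $H^{cop\,op}$ from $S^{-1}$.
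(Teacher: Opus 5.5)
Your argument is correct and is the intended derivation from Theorem \ref{thm:H[0]Sbijiective}. Injectivity of $S$ on $H'\supseteq H_0$ gives $S(H_0)\subseteq H_0$ as in Remark \ref{rm:S(H0)<H0}. Since $H'_0=H_0$ and $H'_{[0]}=H_{[0]}$, bijectivity of $S|_{H_{[0]}}$ is just the ``only if'' half of that theorem applied to $H'$, so that step is not circular, and your closure argument verifies it by hand.

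One correction to that closure argument: the set to use is $S(H_{[0]})=\{x\in H'\mid S^{-1}(x)\in H_{[0]}\}$, not the preimage of $H_{[0]}$ under $S$, which would only reprove $S(H_{[0]})\subseteq H_{[0]}$. Closure of $S(H_{[0]})$ under products is then immediate: apply $S^{-1}$ to (\ref{eq:S(hg)}) with $h=S^{-1}(b)$ and $g=S^{-1}(a)$ to get $S^{-1}(ab)=f(h_1,g_1)\,h_2g_2\,f^{-1}(h_3,g_3)\in H_{[0]}$.
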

It is well known that the antipode of a Hopf algebra with the dual Chevalley property is necessarily bijective; see \cite[Corollary 3.6]{Rad77}. However, the proof of \cite[Corollary 3.6]{Rad77} cannot be carried over to the coquasi-Hopf algebra setting. In contrast, Theorem \ref{thm:H[0]Sbijiective} enable us to prove that the coquasi-antipode of a coquasi-Hopf algebra with the dual Chevalley property is bijective.
\begin{proposition}\label{prop:Chevalleybijective}
 Let $H$ be a (not necessarily finite-dimensional) coquasi-Hopf algebra over $\k$ with the dual Chevalley property, and let $(S,\alpha,\beta)$ be a coquasi-antipode. Then $S$ is bijective. In particular, the coquasi-antipode of a pointed coquasi-Hopf algebra is bijective.
\end{proposition}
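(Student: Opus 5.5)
The plan is to reduce to Theorem \ref{thm:H[0]Sbijiective}, which says that when $S(H_0)\subseteq H_0$, the coquasi-antipode $S$ is bijective as soon as $S|_{H_{[0]}}$ is.

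First I check the hypothesis $S(H_0)\subseteq H_0$. This comes from Remark \ref{rm:S(H0)<H0}: by \cite[Corollary 4.21]{Yu26}, the coquasi-antipode of a coquasi-Hopf algebra with the dual Chevalley property is injective. An injective coalgebra antimorphism sends a simple subcoalgebra $C$ to $S(C)\cong C^{cop}$, which is again simple. Hence $S(H_0)\subseteq H_0$.

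Next I identify $H_{[0]}$. Under the dual Chevalley property, $H_0$ is already a coquasi-Hopf subalgebra, so $H_{[0]}=H_0$, as noted after the definition of the standard filtration. It therefore suffices to show that $S|_{H_0}:H_0\to H_0$ is bijective. Injectivity is known from the previous step. For surjectivity, decompose $H_0=\bigoplus_i C_i$ into simple subcoalgebras, each of which is finite-dimensional. Since $S$ is injective and sends simple subcoalgebras to simple ones, it induces an injective map $\sigma$ on the set of simple subcoalgebras, with $S(C_i)=C_{\sigma(i)}$ and $S|_{C_i}:C_i\to C_{\sigma(i)}$ a linear isomorphism. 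The remaining point, and the main obstacle, is that $\sigma$ is surjective.

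I would settle this by working categorically in the tensor category $\mathscr{M}^{H_0}$. Its objects are semisimple, and every simple comodule $V$ has a left dual $V^*$, whose coefficient coalgebra is $S(C_V)$. In a semisimple locally finite ring category with left duals, evaluation/coevaluation give $V\cong {}^*(V^*)$ in the appropriate sense. Concretely, I would show that the simple comodule $W$ with coefficient coalgebra $C$ is the left dual of some simple $U$; then $C=S(C_U)$, so $C$ lies in the image of $S$.

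To construct $U$, consider the simple subobjects of $W\otimes V$ as $V$ varies. Every simple lies in $H_0$, and $H_0$ is closed under multiplication. The left dual pairing $\mathbbm{1}\to V\otimes V^*$ together with semisimplicity shows that $\mathbbm{1}$ is a summand of $W\otimes V$ exactly when $W\cong V^*$. I would find $V$ with $\operatorname{Hom}(\mathbbm{1},W\otimes V)\neq0$ using the counit/$\beta$ identities (\ref{eq:alpha,beta}) applied to a matrix coefficient of $W$.

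If this step resists, the fallback is the argument in \cite{Rad77}-type style adapted via (\ref{eq:S(hg)}) on each finite-dimensional subcoalgebra $C_i\oplus S(C_i)\oplus S^2(C_i)\oplus\cdots$. Showing that these sums stabilize, so that $S$ restricted to them is a bijective self-map, would yield surjectivity. With $S|_{H_{[0]}}$ bijective, Theorem \ref{thm:H[0]Sbijiective} finishes the proof.

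For the pointed case, $H_0=\k G$ for the group-like set $G$, and $H_0$ is closed under products. Since $S$ maps group-likes to group-likes with $S(g)=g^{-1}$ up to the twist in (\ref{eq:S(hg)}), it is a bijection of $G$. Hence the pointed case follows directly.
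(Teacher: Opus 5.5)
\textbf{There is a gap at the step you yourself flag as the main obstacle.} Your reduction is the same as the paper's. Under the dual Chevalley property one has $H_{[0]}=H_0$ and $S(H_0)\subseteq H_0$, so by Theorem \ref{thm:H[0]Sbijiective} everything comes down to bijectivity of $S|_{H_0}$. The paper simply takes this from \cite[Lemma 2.7]{Yu26}. You instead try to prove it yourself, and the surjectivity of $\sigma$ is never established. When $H_0$ is infinite-dimensional, $\sigma$ could a priori act like a shift on an infinite set of simple subcoalgebras, so this step is not automatic. ``I would find $V$ \dots{} using the counit/$\beta$ identities'' only states an intention. Your fallback of showing that $C_i+S(C_i)+S^2(C_i)+\cdots$ stabilizes just restates what must be proved, and it is the Radford-type argument the paper says does not carry over. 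As written, the proposal does not prove the proposition.

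\textbf{Your categorical framework closes the gap with one more observation.} The object you are looking for is $V=W^*$, and the morphism is the coevaluation, which is where $\beta$ enters.
\begin{enumerate}
\item The map $\operatorname{coev}_W:\mathbbm{1}\to W\otimes W^*$ is nonzero by the zigzag identity.
\item Since $\mathscr{M}^{H_0}$ is semisimple and $\mathbbm{1}$ is simple, $\operatorname{coev}_W$ splits. This gives a nonzero $p:W\otimes W^*\to\mathbbm{1}$.
\item Under the adjunction $\operatorname{Hom}(W\otimes W^*,\mathbbm{1})\cong\operatorname{Hom}(W,W^{**})$, $p$ yields a nonzero comodule map $W\to W^{**}$. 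It is injective because $W$ is simple, hence an isomorphism because $\dim W^{**}=\dim W$.
\item From the defining formula of the left dual, $\operatorname{cf}^R(V^*)=S(\operatorname{cf}^R(V))$. Hence $C=S^2(C)$ for every simple subcoalgebra $C$, so $C\subseteq S(H_0)$ and $S|_{H_0}$ is surjective.
\end{enumerate}
With this paragraph added, your argument becomes a self-contained replacement for the citation.

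\textbf{Minor point on the pointed case.} No twist is involved. Condition (\ref{eq:Phialphabeta=e}) at a group-like $g$ forces $\alpha(g)\beta(g)\neq 0$, and then (\ref{eq:alpha,beta}) gives $S(g)g=1=gS(g)$ exactly. In any case, the pointed case is a special instance of the general statement, since group-likes are closed under products and under $S$.
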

\begin{proof}
According to Theorem \ref{thm:H[0]Sbijiective}, it is enough to show that $S\mid_{ H_0}$ is bijective. This, however, is a direct consequence of \cite[Lemma 2.7]{Yu26}.
\end{proof}
We remark that the assumption that the field is algebraically closed, made in \cite{Yu26}, is not actually needed in the proof of \cite[Lemma 2.7]{Yu26}.

Recall that a tensor category $\mathscr{C}$ is said to have the \emph{Chevalley property}, if the tensor product of every two simple objects of $\mathscr{C}$ is semisimple (\cite[Definition 4.12.3]{EGNO15}). As a corollary of Proposition \ref{prop:Chevalleybijective}, we have
\begin{corollary}
Let $H$ be a (not necessarily finite-dimensional) coquasi-Hopf algebra over $\k$. Then $H$ has the dual Chevalley property if and only if $\mathscr{M}^H$ is a tensor category with the Chevalley property.
\end{corollary}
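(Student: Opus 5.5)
We prove the two implications separately. Throughout we use the dictionary between finite-dimensional right $H$-comodules and subcoalgebras. For $V\in\mathscr{M}^H$ let $C(V)\subseteq H$ be its coefficient coalgebra, spanned by all $(v^*\otimes\id)\rho(v)$. Then:
\begin{itemize}
\item $V$ is simple exactly when $C(V)$ is a simple subcoalgebra;
\item $V$ is semisimple exactly when $C(V)\subseteq H_0$;
\item $C(V\otimes W)=C(V)C(W)$, the span of the products, because the coaction on $V\otimes W$ is $v\otimes w\mapsto v_0\otimes w_0\otimes v_1w_1$;
\item for the left dual, $C(V^*)=S(C(V))$, by the formula $\langle f_0,v\rangle f_1=\langle f,v_0\rangle S(v_1)$.
\end{itemize}
Every simple subcoalgebra arises as $C(V)$ for a simple comodule $V$. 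These facts are routine, and we also use them over the subcoalgebra $H_0$.

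\emph{``Only if''.} Assume $H_0$ is a coquasi-Hopf subalgebra. By Proposition \ref{prop:Chevalleybijective}, $S$ is bijective. Hence every object of $\mathscr{M}^H$ has both left and right duals (the right dual uses $S^{-1}$), so $\mathscr{M}^H$ is a tensor category. Now let $V,W$ be simple. Then
$$C(V\otimes W)=C(V)C(W)\subseteq H_0H_0\subseteq H_0,$$
so $V\otimes W$ is a comodule over the cosemisimple coalgebra $H_0$. It is therefore semisimple as an $H_0$-comodule, and hence as an $H$-comodule, since its $H$-subcomodules are exactly its $H_0$-subcomodules. This is the Chevalley property.

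\emph{``If''.} Assume $\mathscr{M}^H$ is a tensor category with the Chevalley property. We must show that $H_0$ contains $1_H$, is closed under multiplication, and satisfies $S(H_0)\subseteq H_0$. The structure maps $\Phi,\alpha,\beta$ then simply restrict, making $H_0$ a coquasi-Hopf subalgebra.
\begin{itemize}
\item $1_H\in H_0$, since $\k 1_H=C(\mathbbm{1})$ and the unit object is simple.
\item For simple subcoalgebras $C=C(V)$ and $D=C(W)$, semisimplicity of $V\otimes W$ gives $CD=C(V\otimes W)\subseteq H_0$. Since $H_0$ is the sum of its simple subcoalgebras, $H_0H_0\subseteq H_0$.
\item In a rigid monoidal category the dual of a simple object is simple, because dualizing is an anti-equivalence that reverses the subobject lattice. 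Hence $S(C(V))=C(V^*)$ is a simple subcoalgebra for every simple $V$, and so $S(H_0)\subseteq H_0$.
\end{itemize}

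The only delicate point is this last step, $S(H_0)\subseteq H_0$. The paper does not know a priori that $S$ is injective; injectivity is what Remark \ref{rm:S(H0)<H0} would otherwise need. We therefore bypass it with the categorical argument via $C(V^*)=S(C(V))$. To justify that identity, one checks from the defining formula of $\rho^R_{V^*}$ that the matrix coefficients of $V^*$ are exactly $S$ applied to those of $V$. Everything else is bookkeeping with coefficient coalgebras.
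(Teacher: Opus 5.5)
Your proof is correct and follows the same route as the paper's two-line argument: the ``only if'' direction rests on the bijectivity of $S$ from Proposition~\ref{prop:Chevalleybijective}, and the ``if'' direction, for which the paper simply cites \cite[Proposition 4.2.9]{EGNO15}, you make explicit via $C(V\otimes W)=C(V)C(W)$, $C(V^*)=S(C(V))$ and the simplicity of duals of simple objects in a rigid category. One harmless slip is that only the implication ``$V$ simple $\Rightarrow$ $C(V)$ simple'' holds in general (for instance $C(V\oplus V)=C(V)$), but that is the only direction you use.
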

\begin{proof}
The ``if'' part follows as a corollary of \cite[Proposition 4.2.9]{EGNO15}, whereas the ``only if'' part is obtained from Proposition \ref{prop:Chevalleybijective}.
\end{proof}

\section{Coquasitriangular structures}\label{section4}
\subsection{Coquasitriangular structures on cotensor coquasi-Hopf algebras}
Recall that a \textit{coquasitriangular} coquasi-Hopf algebra (see \cite[Definition 11.14]{BCPV19}) is a pair $(H, R)$, where $H$ is a coquasi-Hopf algebra and $R: H \otimes H \to \k$ is a convolution-invertible linear map satisfying the following conditions:
\begin{eqnarray}
&R(a_1, b_1)a_2b_2=b_1a_1 R(a_2, b_2),&\label{eq:CQT1}\\
&R(ab, c)=\Phi(c_1,a_1,b_1)R(a_2,c_2)\Phi^{-1}(a_3,c_3,b_2)R(b_3,c_4)\Phi(a_4,b_4,c_5),&\label{eq:CQT2}\\
&R(a,bc)=\Phi^{-1}(b_1,c_1,a_1)R(a_2,c_2)\Phi(b_2,a_3,c_3)R(a_4,b_3)\Phi^{-1}(a_5,b_4,c_4).&\label{eq:CQT3}
\end{eqnarray}
If, in addition,
\begin{eqnarray}\label{eq:CQT4}
 R(a_{1}, b_{1}) R(b_{2}, a_{2})=\varepsilon (a) \varepsilon (b),
\end{eqnarray}
then $(H, R)$ is called \textit{cotriangular}.

We now investigate when a coquasitriangular structure exists on $\operatorname{CoT}_H(M)$ and provide characterizations of its coquasitriangular structures.
\begin{proposition}\label{prop:CotHMCQT}
Let $(H, R)$ be a coquasitriangular coquasi-Hopf algebra over $\k$, and let $V$ be a left $H$-comodule. Set $M = V \otimes H$. Then $M$ admits an $H$-Majid bimodule structure, and consequently $\operatorname{CoT}_H(M)$ admits a coquasi-Hopf algebra structure. Moreover, if $H$ is cotriangular, then $\operatorname{CoT}_H(M)$ admits a cotriangular coquasi-Hopf algebra structure.
\end{proposition}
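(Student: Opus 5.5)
Given a coquasitriangular $(H,R)$ and a left $H$-comodule $V$, I will use the braiding to turn $V$ into a left-left Yetter-Drinfeld module over $H$. Lemma \ref{lem:Majidmod=YD} then makes $M=F(V)=V\otimes H$ an $H$-Majid bimodule. Concretely, I define
$$
h\vartriangleright v := R(v_{-1},h_1)\,v_0\quad\text{or the variant}\quad h\vartriangleright v:=R^{-1}(h_1, v_{-1})v_0,
$$
with the variant chosen so that the Yetter-Drinfeld compatibility
$$
(h_1\vartriangleright v)_{-1}h_2\otimes (h_1\vartriangleright v)_0=h_1v_{-1}\otimes h_2\vartriangleright v_0
$$
follows directly from \eqref{eq:CQT1}. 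In the corresponding Hopf case, the coaction of $h\vartriangleright v$ is $h_2v_{-1}S(h_3)$ twisted by $R$, which \eqref{eq:CQT1} converts back. Unitality $1\vartriangleright v=v$ follows from $R(a,1)=\varepsilon(a)$, a consequence of \eqref{eq:CQT2}--\eqref{eq:CQT3} and the normalizations \eqref{eq:Phi(1,a,b)=e(a)e(b)}.

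The main obstacle is the quasi-associativity axiom of Definition \ref{def:YDmod} for $(hl)\vartriangleright v$. Expanding $R(v_{-1},hl)$ by \eqref{eq:CQT3} gives a product of $\Phi^{\pm1}$ factors and two $R$ factors. I plan to match these, factor by factor, against the three $\Phi$'s in the axiom, using coassociativity of the coaction on $V$ and the fact that the coaction of $l\vartriangleright v_0$ equals $l_1v_{-1}S(l_2)$ only up to $R$. A cleaner route I will try first is categorical. $({}^H\mathscr{M},R)$ is braided, and the half-braiding $c_{-,V}$ turns $V$ into an object of the Drinfeld center. The center is identified with ${}^H_H\mathcal{YD}$, implicitly via \cite{Bal09}, so the axiom is the translation of the hexagon for $c$, which is precisely \eqref{eq:CQT3}. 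I will use this correspondence to guide the choice of formula and then check the identity by a direct computation.

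Once $M$ is a Majid bimodule, Proposition \ref{prop:CotCQH} immediately gives the coquasi-Hopf structure on $\operatorname{CoT}_H(M)$. For the cotriangular statement I would construct $R'$ on $\operatorname{CoT}_H(M)$ concentrated in degree $0$, that is, $R'(a,b)=R(a,b)$ for $a,b\in H$ and $0$ otherwise, in parallel with $\Phi'$. Conditions \eqref{eq:CQT2}--\eqref{eq:CQT4} restricted to degree $0$ are those of $H$. The case where some argument has positive degree needs care. For \eqref{eq:CQT1} with $a\in H$ and $b\in M$, both sides reduce to degree-one components, and the identity becomes
$$
R(a_1,b_{-1})\,a_2\cdot b_0=b_0\cdot a_1\,R(a_2,b_1),
$$
with the analogous identity for $a\in M$, $b\in H$. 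On $M=V\otimes H$ these follow from the definition of $\vartriangleright$ via $R$, the formulas of Lemma \ref{lem:Majidmod=YD} and \eqref{eq:CQT1} in $H$. Here cotriangularity, \eqref{eq:CQT4}, is used: it ensures that the choice between $R$ and $R^{-1}$ in $\vartriangleright$ is consistent on both sides.

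For higher degrees, I would argue that $\operatorname{CoT}_H(M)$ is determined by the universal property (Lemma \ref{lemma:universal}), so both sides of \eqref{eq:CQT1}, viewed as coalgebra maps into $\operatorname{CoT}_H(M)$, agree once their degree $0$ and $1$ projections agree. The conditions \eqref{eq:CQT2}--\eqref{eq:CQT4} are scalar identities, and both sides vanish outside total degree $0$ by gradedness. This reduces everything to the degree $\le 1$ checks above.
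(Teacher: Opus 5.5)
\emph{Verdict:} your route is the paper's, but the step that passes to higher degrees fails as written. Specifically, you use the Yetter--Drinfeld structure $h\vartriangleright v=R(v_{-1},h)v_0$, which the paper takes from \cite[Example 2.22]{Yu26}; its compatibility is exactly \eqref{eq:CQT1} and its quasi-associativity is exactly \eqref{eq:CQT3}. You then apply Lemma \ref{lem:Majidmod=YD} and Proposition \ref{prop:CotCQH}, and extend $R$ by zero to $R'$, with \eqref{eq:CQT2}--\eqref{eq:CQT4} and invertibility following trivially by degree. Your diagnosis of where cotriangularity enters also matches the paper's computation. In the $M\otimes H$ identity, \eqref{eq:CQT2} produces $R(v_{-1},l)$, which matches the action. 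In the $H\otimes M$ identity, \eqref{eq:CQT3} produces $R(l,v_{-1})$, which cancels against $R(v_{-1},l)$ from the action only via \eqref{eq:CQT4}. Note that you need \eqref{eq:CQT2} and \eqref{eq:CQT3} there to expand $R$ on the product $b_{-1}=v_{-1}h_1$, not just \eqref{eq:CQT1}.

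\textbf{The gap.} Put $C=\operatorname{CoT}_H(M)$. The two sides of \eqref{eq:CQT1}, namely $a\otimes b\mapsto R'(a_1,b_1)a_2b_2$ and $a\otimes b\mapsto b_1a_1R'(a_2,b_2)$, are not coalgebra maps $C\otimes C\to C$. Composing either with $\varepsilon$ gives $R'(a,b)$ rather than $\varepsilon(a)\varepsilon(b)$. So the uniqueness part of Lemma \ref{lemma:universal} does not apply to them. You also cannot fall back on multiplicativity, because in general $C$ is not generated as an algebra by $H\oplus M$; that subalgebra is only $S_H(M)$.

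\textbf{The repair.} Conjugate. By convolution invertibility of $R'$, \eqref{eq:CQT1} is equivalent to
\[
ba=R'(a_1,b_1)\,a_2b_2\,R'^{-1}(a_3,b_3)\qquad\text{for all }a,b\in C .
\]
Both sides are now coalgebra maps $C\otimes C\to C$. On the left, $m'$ composed with the flip is a coalgebra map. On the right, conjugating a coalgebra map by a convolution-invertible functional on the source again gives a coalgebra map. Lemma \ref{lemma:universal} then reduces the claim to the projections $\pi_0$ and $\pi_1$.
\begin{itemize}
\item The $\pi_0$-component is \eqref{eq:CQT1} for $H$.
\item The $\pi_1$-component vanishes on both sides outside total degree $1$.
\item On $H\otimes M$ the $\pi_1$-component reads $b\cdot a=R(a_1,b_{-1})\,a_2\cdot b_0\,R^{-1}(a_3,b_1)$. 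Using coassociativity of the bicomodule structure, this is equivalent to your identity $R(a_1,b_{-1})\,a_2\cdot b_0=b_0\cdot a_1\,R(a_2,b_1)$. The case $M\otimes H$ is handled the same way.
\end{itemize}
With this change your argument is complete. It is a precise version of the paper's brief appeal to ``the definitions of $R'$ and the multiplication on $\operatorname{CoT}_H(M)$, and the convolution invertibility of $R$''.
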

\begin{proof}
The proof is analogous to that of \cite[Lemma 5.28]{Yu26}, but we include it for the sake of completeness.
According to \cite[Example 2.22]{Yu26}, we can show that $V$ admits a left-left Yetter-Drinfeld module structure via
$ h\vartriangleright v:=R(v_{-1}, h)v_0.$ It then follows from Lemma \ref{lem:Majidmod=YD} that $M$ admits an $H$-Majid bimodule structure, given explicitly as follows:
\begin{eqnarray*}
&l\cdot
(v\otimes h):=\Phi (l_{1}, v_{-3}, h_{1})\Phi
^{-1}(v_{-1}, l_{3},
h_{2})R(v_{-2},l_2) v_0\otimes l_{4}h_{3}, & \\
    &(v\otimes
h)\cdot l:=\Phi ^{-1}(v_{-1}, h_{1}, l_{1})v_{0}\otimes
h_{2}l_{2}, &  \\
&\rho _{V\otimes H}^{L}\left( v\otimes h\right) :=v_{-1}h_{1}\otimes
(v_{0}\otimes h_{2}), & \\
&\rho _{V\otimes H}^{R}\left( v\otimes h\right) :=(v\otimes h_{1})\otimes
h_{2}.&
\end{eqnarray*}
Hence, by Proposition \ref{prop:CotCQH}, $\operatorname{CoT}_H(M)$ admits a coquasi-Hopf algebra structure. We now extend $R$ trivially to be a function $R^\prime$ on $\operatorname{CoT}_H(M) \otimes \operatorname{CoT}_H(M)$ by defining it on homogeneous elements as follows: $$R^\prime(a,b):=\left\{
\begin{aligned}
R(a,b), &\text{ if} ~~~ a,b\in H; \\
0,\;\;\;\;&\text{ otherwise}.
\end{aligned}
\right.$$
Now we claim that $(\operatorname{CoT}_H(M), R^\prime)$ is cotriangular. Indeed, the convolution invertibility of $R^\prime$, as well as identities (\ref{eq:CQT2}), (\ref{eq:CQT3}) and (\ref{eq:CQT4}), follow immediately from the definition of $R^\prime$. It remains to verify that $R^\prime $ satisfies (\ref{eq:CQT1}). For any $l\in H$ and $v\otimes h \in M$, by (\ref{eq:CQT1})-(\ref{eq:CQT4}), we have
\begin{eqnarray*}
&&R(l_1,v_{-1}h_1) l_2\cdot (v_0\otimes h_2)\\
&=&\Phi^{-1}(v_{-7},h_1,l_1)R(l_2,h_2)\Phi(v_{-6},l_3,h_3)R(l_4,v_{-5})\Phi^{-1}(l_5,v_{-4},h_4)\\
&&\Phi(l_6,v_{-3},h_5)\Phi^{-1}(v_{-1},l_8,h_6)R(v_{-2},l_7) v_0\otimes R^{-1}(l_9,h_7)h_8l_{10} R(l_{11},h_9)\\
&=&\Phi^{-1}(v_{-1},h_1,l_1)v_0\otimes h_2l_{2} R(l_{3},h_3)\\
&=&(v\otimes h_1) \cdot l_1 R(l_2, h_2),
\end{eqnarray*}
and
\begin{eqnarray*}
&&R(v_{-1}h_1, l_1)(v_0\otimes h_2)\cdot l_2    \\
&=& \Phi(l_1, v_{-5}, h_1) R(v_{-4}, l_2)\Phi^{-1}(v_{-3}, l_3, h_2) R(h_3, l_4)
\Phi(v_{-2}, h_4, l_5) \Phi^{-1}(v_{-1}, h_5, l_6) \\&& v_0\otimes R^{-1}(h_6, l_7)l_8 h_7 R(h_8, l_9)\\
&=&\Phi(l_1, v_{-3},h_1)R(v_{-2},l_2)\Phi^{-1}(v_{-1},l_3,h_2)v_0\otimes l_4h_3 R(h_4, l_5)\\
&=& l_1 \cdot (v\otimes h_1) R(h_2, l_2).
\end{eqnarray*}
This means that $R^\prime$ satisfies \eqref{eq:CQT1} when restricted to
$$
(H \otimes M) \oplus (M \otimes H) \subset \operatorname{CoT}_H(M) \otimes \operatorname{CoT}_H(M).
$$
Using the definitions of $R^\prime$ and the multiplication on $\operatorname{CoT}_H(M)$, and the convolution invertibility of $R$, one can show that $R^\prime$ satisfies \eqref{eq:CQT1} for all elements of $\operatorname{CoT}_H(M) \otimes \operatorname{CoT}_H(M)$.
\end{proof}

Let $H$ be a graded coquasitriangular coquasi-Hopf algebra. A coquasitriangular structure $R$ on $H$ is said to be \textit{concentrated in degree $0$} if $R(x,y)=0$ for all homogeneous elements $x,y$ unless $\deg(x)=\deg(y)=0$. Clearly, the coquasitriangular structure on $\operatorname{CoT}_H(M)$ constructed in the proof of Proposition \ref{prop:CotHMCQT} is concentrated in degree $0$.

\begin{corollary}\label{coro:CQT}
Let $(H, R)$ be a cotriangular coquasi-Hopf algebra over $\k$, and let $M$ be an $H$-Majid bimodule. Then the coalgebra $\operatorname{CoT}_H(M)$ admits a cotriangular coquasi-Hopf algebra structure that is concentrated in degree $0$.
\end{corollary}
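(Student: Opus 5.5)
The plan is to reduce Corollary \ref{coro:CQT} to Proposition \ref{prop:CotHMCQT} via the equivalence of Lemma \ref{lem:Majidmod=YD}, and then to construct the cotriangular structure directly, since the Majid bimodule structure on $M$ is now prescribed and need not come from $R$.

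First, by Proposition \ref{prop:CotCQH}, $\operatorname{CoT}_H(M)$ is a graded coquasi-Hopf algebra with reassociator $\Phi'$ concentrated in degree $0$. By Lemma \ref{lem:Majidmod=YD}, $M\cong V\otimes H$ as Majid bimodules, where $V=M^{coH}$ carries its given Yetter--Drinfeld action $\vartriangleright$. This action need not be the one induced by $R$. So I would not quote Proposition \ref{prop:CotHMCQT} verbatim. Instead I would define $R'$ exactly as there: $R'=R$ on $H\otimes H$ and $R'=0$ on all other homogeneous pairs. Then convolution invertibility, \eqref{eq:CQT2}, \eqref{eq:CQT3} and \eqref{eq:CQT4} follow from the degree-$0$ concentration of $R'$ and $\Phi'$. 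The point is that both sides of each identity vanish unless all arguments lie in $H$, because the coproduct respects the grading and $\varepsilon$ kills positive degrees.

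The remaining identity is \eqref{eq:CQT1}. For $a\in M^{\Box p}$ and $b\in M^{\Box q}$, only the degree-$0$ Sweedler parts contribute to the $R'$ factors. On $H\otimes M$ and $M\otimes H$ the identity reads
\[
R(l_1,m_{-1})\,l_2\cdot m_0=m_0\cdot l_1\,R(m_1,l_2),\qquad R(m_{-1},l_1)\,m_0\cdot l_2=l_1\cdot m_0\,R(l_2,m_1)
\]
for $l\in H$ and $m\in M$. This is exactly a compatibility condition between the left and right actions on $M$ and $R$. In general it fails unless $M$ is chosen appropriately. I would therefore pass to the Yetter--Drinfeld side. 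In a cotriangular setting the category ${}^H_H\mathcal{YD}$ contains $({}^H\mathscr M,\otimes)$ via $h\vartriangleright v=R(v_{-1},h)v_0$. I expect the intended reading is that the relevant Majid bimodules are those of the form in Proposition \ref{prop:CotHMCQT}. I would check whether cotriangularity \eqref{eq:CQT4} forces the given action to agree with the $R$-induced one; I doubt it does in general.

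Once the degree-$(0,1)$ and $(1,0)$ cases hold, the general case of \eqref{eq:CQT1} follows as at the end of the proof of Proposition \ref{prop:CotHMCQT}. Both sides of \eqref{eq:CQT1}, viewed as maps into $\operatorname{CoT}_H(M)$, are determined by their components via the universal property of Lemma \ref{lemma:universal}. The product $m'$ in degree $n$ is $(m'_1)^{\otimes n}\Delta^{n-1}$, so one inducts using the cases already verified together with convolution invertibility of $R$.

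The main obstacle is the $(H,M)$ compatibility above, which depends on the actions on $M$. My first attempt would be to replace $M$, with the same underlying bicomodule, by the Majid bimodule $V\otimes H$ carrying the $R$-induced Yetter--Drinfeld structure. As a coalgebra, $\operatorname{CoT}_H(M)$ depends only on the bicomodule, so this is legitimate if the corollary asks only that the coalgebra admit such a structure, which is how it is phrased. The resulting coquasi-Hopf structure is then the one from Proposition \ref{prop:CotHMCQT}, and it is cotriangular and concentrated in degree $0$.
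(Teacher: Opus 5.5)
Your final step is the paper's proof. You use the bicomodule isomorphism $M\cong M^{co H}\otimes H$ and regard $V=M^{co H}$ merely as a left $H$-comodule. You then give $V\otimes H$ the Majid bimodule structure induced by $h\vartriangleright v=R(v_{-1},h)v_0$, which leaves the underlying bicomodule, and hence the coalgebra $\operatorname{CoT}_H(M)$, unchanged, and you apply Proposition~\ref{prop:CotHMCQT}. Your reading that only the coalgebra is kept while the multiplication changes is the intended one; the paper states this in its later remarks. So you can drop the exploratory attempt to keep the original actions, which, as you suspected, need not be compatible with $R$.
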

\begin{proof}
According to \cite[Proposition 2.16]{Yu26}, we have $$M\cong M^{co H}\otimes H.$$ Note that $M^{co H}$ is a left $H$-comodule. It then follows from Proposition \ref{prop:CotHMCQT} that the coalgebra $\operatorname{CoT}_H(M)$ admits a graded cotriangular coquasi-Hopf algebra structure that is concentrated in degree $0$.
\end{proof}

Let $(M, \rho_M^R)$ (respectively, $(N, \rho_N^L)$) be a finite-dimensional right (respectively, left) comodule over a coalgebra $H'$. Recall that the \textit{coefficient coalgebra} $\operatorname{cf}^R(M)$ (respectively, $\operatorname{cf}^L(N)$) of $M$ (respectively, $N$) is the smallest subcoalgebra of $H^\prime$ satisfying
\[
\rho_M^R(M) \subseteq M \otimes \operatorname{cf}^R(M)
\;\; \text{(respectively, } \rho_N^L(N) \subseteq \operatorname{cf}^L(N) \otimes N\text{)}.
\]
One can show that the following lemma holds.
\begin{lemma}\label{lem:RmidH}
Let $H$ be a coalgebra over $\k$, and let $M$ be an $H$-bicomodule. Suppose that the coalgebra $\operatorname{CoT}_H(M)$ admits the structure of a graded coquasitriangular coquasi-Hopf algebra whose coquasitriangular structure $R$ is concentrated in degree $0$. Then $H$ is a coquasitriangular coquasi-Hopf algebra. Moreover, for any elements $a, b \in \operatorname{cf}^L(M^{co H})$ or $a,b\in \operatorname{cf}^R({}^{co H}M)$, we have
$R(a_1,b_1)R(b_2, a_2)=\varepsilon(a)\varepsilon(b)$.
\end{lemma}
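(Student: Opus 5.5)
The plan has two parts. First, restrict the structure to degree $0$ to see that $H$ itself is coquasitriangular. Second, evaluate the braiding axiom (\ref{eq:CQT1}) on one element of degree $0$ and one of degree $1$, and read off the two-sided identity in the degree-one component.

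\emph{Step 1.} By Corollary \ref{coro:coquaibialgiff} (note that concentration of $R$ and of the grading forces the reassociator to behave well in degree $0$), $H=\operatorname{CoT}_H(M)(0)$ is a subcoalgebra closed under multiplication and contains $1$. The maps $\Phi$, $S$, $\alpha$, $\beta$ restrict to $H$, since the grading is respected, so $H$ is a coquasi-Hopf subalgebra. Restrict $R$ to $H\otimes H$. Since $\Delta(H)\subseteq H\otimes H$, identities (\ref{eq:CQT1})--(\ref{eq:CQT3}) restrict verbatim. The convolution inverse of $R$ also restricts, because the inverse evaluated on $H\otimes H$ only involves values on $H\otimes H$. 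Thus $(H,R|_{H\otimes H})$ is coquasitriangular.

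\emph{Step 2.} Take $m\in M^{coH}$ and $l\in H$, and apply (\ref{eq:CQT1}) to the pair $(l,m)$ in $\operatorname{CoT}_H(M)$. Expand $\Delta(m)=m_{-1}\otimes m_0+m\otimes 1$. Since $R$ vanishes unless both arguments lie in degree $0$, the terms in which $m$ appears in an argument of $R$ drop out. Projecting onto degree $1$ gives
$$
R(l_1,m_{-1})\,l_2\cdot m_0 \;=\; m\cdot l_1\,R(l_2,1)\;=\;m\cdot l .
$$
Similarly, applying (\ref{eq:CQT1}) to the pair $(m,l)$ and projecting onto degree $1$ gives
$$
R(m_{-1},l_1)\,m_0\cdot l_2 \;=\; l\cdot m .
$$
Combining the two, we get
$$
m\cdot l=R(l_1,m_{-2})R(m_{-1},l_2)\,m_0\cdot l_3 .
$$
Here we have used that $l\cdot m_0$ has left coaction $l_1m_{-1}\otimes l_2\cdot m_0$, since $p_L$ is a bicomodule map.

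\emph{Step 3.} Next I would cancel the right action, using the isomorphism $M^{coH}\otimes H\cong M$, $m\otimes h\mapsto m\cdot h$, from \cite[Proposition 2.16]{Yu26}. Applying its inverse and then $\id\otimes\varepsilon$ should yield
$$
R(l_1,m_{-2})R(m_{-1},l_2)\,m_0=\varepsilon(l)\,m .
$$
Here I need care, because $m_0\cdot l_3$ equals $m_0\otimes l_3$ only up to a factor of $\Phi$, and the factors of $\Phi$ produced by the coassociativity twist must cancel. This holds because $\Phi(m_{-1},\ldots)$ with $m_0\in M^{coH}$ reduces via normalization after applying $\varepsilon$ to the $H$ factor.

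\emph{Step 4.} Now apply an arbitrary functional $\xi\in (M^{coH})^*$. The elements $(\xi\otimes\id)\rho^L(m)$ span $\operatorname{cf}^L(M^{coH})$. This gives $R(l_1,a_1)R(a_2,l_2)=\varepsilon(l)\varepsilon(a)$ for all $a\in\operatorname{cf}^L(M^{coH})$ and all $l\in H$, which is in particular the claim for $a,b\in\operatorname{cf}^L(M^{coH})$. The statement for $\operatorname{cf}^R({}^{coH}M)$ is symmetric: use $H\otimes{}^{coH}M\cong M$ and the right coaction instead.

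The main obstacle is Step 3: checking that the reassociator twists arising from moving $l$ across $m$ and back cancel exactly. If a direct cancellation fails, I would instead work in the Yetter--Drinfeld picture of Lemma \ref{lem:Majidmod=YD}, where the right action is free.
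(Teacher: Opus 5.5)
Your argument is correct, but it takes a different and shorter route than the paper.

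\textbf{The paper's route.} The paper works in degree two. It fixes bases $u_i$, $v_k$ of finite-dimensional subcomodules of $M^{coH}$ with matrix coefficients $c_{ij}$, $d_{kl}$, and writes out the product $u_iv_k\in M^{\Box 2}$. It then applies \eqref{eq:CQT1} to the degree-one pair to get $u_iv_k=\sum R(d_{kl},c_{ij})\,v_lu_j$, and rewrites the mixed terms $u\cdot d$ and $v\cdot c$ using the degree-$(0,1)$ instances of \eqref{eq:CQT1}. After cancelling the common terms, it compares coefficients using linear independence of $\{h\cdot m\}$, i.e.\ $H\otimes M^{coH}\cong M$, $h\otimes m\mapsto h\cdot m$. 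This is why it first needs bijectivity of the antipode from \cite[Proposition 11.15(iii)]{BCPV19}. Applying $\varepsilon$ then gives the identity on matrix coefficients.

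\textbf{Your route.} You observe that the degree-two computation is unnecessary. The two mixed-degree instances of \eqref{eq:CQT1} already give $m\cdot l=R(l_1,m_{-2})R(m_{-1},l_2)\,m_0\cdot l_3$. You then cancel with the isomorphism $M^{coH}\otimes H\cong M$, $n\otimes h\mapsto n\cdot h$ (Lemma~\ref{lem:Majidmod=YD}, as used in the proof of Lemma~\ref{lem:antipodebijective}), which holds without bijectivity of $S$. Your route also gives a stronger conclusion: $R(l_1,a_1)R(a_2,l_2)=\varepsilon(l)\varepsilon(a)$ for all $l\in H$ and $a\in\operatorname{cf}^L(M^{coH})$, and symmetrically for $\operatorname{cf}^R({}^{coH}M)$. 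The paper's computation only yields this when both arguments are coefficients of $M^{coH}$.

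\textbf{Step 3 is not an obstacle.} The justification you give there, via normalization of $\Phi$, should be dropped because no reassociator appears. Since $\rho^L(M^{coH})\subseteq H\otimes M^{coH}$, each $m_0$ in your relation lies in $M^{coH}$. So the right-hand side is literally $\theta\bigl(R(l_1,m_{-2})R(m_{-1},l_2)\,m_0\otimes l_3\bigr)$ with $\theta(n\otimes h)=n\cdot h$. A $\Phi$ factor would only arise for iterated actions such as $(n\cdot g)\cdot h$, which you never form. Injectivity of $\theta$ gives $m\otimes l=R(l_1,m_{-2})R(m_{-1},l_2)\,m_0\otimes l_3$, and applying $\id\otimes\varepsilon$ finishes.

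\textbf{Smaller corrections.}
\begin{itemize}
\item In Step~2, what you actually use is that $M^{coH}$ is a left subcomodule, so your second relation applies to $m_0$. You do not use the coaction on $l\cdot m_0$.
\item In Step~4 the functional belongs on the $M$-factor: $a=(\id\otimes\xi)\rho^L(m)=m_{-1}\xi(m_0)$.
\item In Step~1, the parenthetical claim that concentration of $R$ controls $\Phi$ is unfounded, but it is also unnecessary. Project \eqref{eq:associativity}, with two arguments in $H$ and one in $M$, onto degree one. Only $\Phi|_{H^{\otimes 3}}$ appears, and this yields the Majid bimodule axioms directly.
\end{itemize}
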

\begin{proof}
Since $\operatorname{CoT}_H(M)$ is a graded coquasi-Hopf algebra structure, it follows that $H$ is a coquasi-Hopf algebra.
A direct verification shows that $R\mid_{H\otimes H}$ is a coquasitriangular structure on $H$. According to \cite[Proposition 11.15(iii)]{BCPV19}, the coquasi-antipode of $H$ is bijective. By Lemma \ref{lem:gradedMajid} and \cite[Proposition 2.17]{Yu26}, we know that $M$ admits an $H$-Majid bimodule structure and $$H\otimes M^{co H}\cong M.$$ This implies that the set $$\{h\cdot m\mid h \text{ is a basis element of } H, m \text{ is a basis element of } M^{co H}\}$$ is linearly independent.
For any $m, n \in M^{\mathrm{co}H}$, by the fundamental theorem of comodules (\cite[Theorem 2.1.7]{DNR01}), $m$ and $n$ lie in finite-dimensional subcomodules $U$ and $V$ of $M^{\mathrm{co}H}$, respectively. Let $\{u_{1}, u_2, \dots, u_p\}$ and $\{v_{1}, v_2, \dots, v_q\}$ be bases of $U$ and $V$, respectively, such that
$$
\rho^L_U(u_i)=\sum_{j=1}^p c_{ij}\otimes u_j ,\;\; \rho^L_V(v_k)=\sum_{l=1}^q d_{kl}\otimes v_l.
$$
It then follows that
$$\Delta(c_{ij})=\sum\limits_{t=1}^p c_{it}\otimes c_{tj},\;\;\varepsilon(c_{ij})=\delta_{i, j}\;\; \text{for all } 1\leq i,j \leq p,$$
and
$$\Delta(d_{kl})=\sum\limits_{t=1}^q d_{kt}\otimes d_{tl},\;\;\varepsilon(d_{kl})=\delta_{k, l}\;\; \text{for all } 1\leq k,l \leq q.$$
In view of (\ref{eq:CQT1}) and the definition of the multiplication on $\operatorname{CoT}_H(M)$, we get
\begin{eqnarray*}
u_i\cdot v_k&=& \sum_{j=1}^p c_{ij} \cdot v_k \otimes u_j+\sum_{l=1}^q u_i\cdot  d_{kl} \otimes v_l\\
&=&\sum_{j=1}^p c_{ij} \cdot v_k \otimes u_j+ \sum_{r=1}^p\sum_{l,t=1}^q R(d_{kt}, c_{ir}) d_{tl}\cdot u_r \otimes v_l\\
&=&\sum_{j=1}^p  \sum_{l=1}^q R(d_{kl}, c_{ij}) v_l \cdot u_j\\
&=&\sum_{j=1}^p  \sum_{l=1}^qR(d_{kl}, c_{ij})(\sum_{r=1}^q d_{lr} \cdot u_j \otimes v_r +\sum_{s=1}^p v_l \cdot c_{js} \otimes u_s)\\
&=&\sum_{j=1}^p  \sum_{l,r=1}^qR(d_{kl}, c_{ij}) d_{lr} \cdot u_j \otimes v_r + \sum_{j,e,s=1}^p  \sum_{l,f=1}^q R(d_{kl}, c_{ij})R(c_{je}, d_{lf}) c_{es}\cdot v_f \otimes u_s.
\end{eqnarray*}
It follows that
$$
\sum_{j=1}^p c_{ij} \cdot v_k \otimes u_j=\sum_{j,e,s=1}^p  \sum_{l,f=1}^q R(d_{kl}, c_{ij})R(c_{je}, d_{lf}) c_{es}\cdot v_f \otimes u_s.
$$
Thus we have
$$
\sum_{j=1}^p  \sum_{l=1}^q R(d_{kl}, c_{ij})R(c_{je}, d_{lf})=\delta_{k,f} \delta_{i,e}.
$$
This means that
$$R(a_1,b_1)R(b_2, a_2)=\varepsilon(a)\varepsilon(b)$$ holds for all $a, b\in \operatorname{cf}^L(M^{co H})$. An analogous argument applies to $a,b\in \operatorname{cf}^R({}^{co H}M)$.
\end{proof}

\begin{proposition}\label{prop:CQTiff}
Let $H$ be a coquasitriangular coquasi-Hopf algebra over $\k$, and let $M$ be an $H$-Majid bimodule such that $H$ is generated by $\operatorname{cf}^L(M^{co H})$ or $\operatorname{cf}^R({}^{co H}M)$. Then the coalgebra $\operatorname{CoT}_H(M)$ admits a coquasitriangular coquasi-Hopf algebra structure concentrated in degree $0$ if and only if $H$ is cotriangular.
\end{proposition}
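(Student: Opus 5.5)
The plan is to prove the two implications separately, leaning on Corollary \ref{coro:CQT} for sufficiency and on Lemma \ref{lem:RmidH} for necessity.

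\emph{Sufficiency.} Suppose $(H,R)$ is cotriangular. Corollary \ref{coro:CQT} then applies to the Majid bimodule $M$ directly. Via $M\cong M^{coH}\otimes H$ and Proposition \ref{prop:CotHMCQT}, it gives a cotriangular, hence coquasitriangular, structure $R'$ on $\operatorname{CoT}_H(M)$ that is concentrated in degree $0$. One point needs checking. Proposition \ref{prop:CotHMCQT} equips $M^{coH}\otimes H$ with the Majid bimodule structure coming from the Yetter--Drinfeld action $h\vartriangleright v=R(v_{-1},h)v_0$. This need not coincide with the Yetter--Drinfeld structure that $M^{coH}$ inherits from the given $M$. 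So I will either state the conclusion up to this choice of Majid bimodule structure, as the paper does in Corollary \ref{coro:CQT}, or check that condition \eqref{eq:CQT1} on $H\otimes M$ forces the action to have this form.

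\emph{Necessity.} Assume $\operatorname{CoT}_H(M)$ carries a coquasitriangular structure $R'$ concentrated in degree $0$. Write $R:=R'|_{H\otimes H}$. By Lemma \ref{lem:RmidH}, $R$ is coquasitriangular on $H$. The same lemma gives $R(a_1,b_1)R(b_2,a_2)=\varepsilon(a)\varepsilon(b)$ for all $a,b\in C$, where $C$ is $\operatorname{cf}^L(M^{coH})$ or $\operatorname{cf}^R({}^{coH}M)$. Set
$$Q(a,b):=R(a_1,b_1)R(b_2,a_2).$$
The goal is $Q=\varepsilon\otimes\varepsilon$ on all of $H\otimes H$.

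The plan is to show that the set $K$ of pairs on which $Q$ is trivial is closed under products in each variable. Applying \eqref{eq:CQT2} and \eqref{eq:CQT3} to $Q(ab,c)$ expresses it, up to reassociator factors, as a combination of $Q(a,c)$ and $Q(b,c)$. This is the standard fact that the monodromy of a braided category is a "double braiding" compatible with tensor products via the hexagon identities. Hence if $Q(a,-)$ and $Q(b,-)$ are trivial on a subcoalgebra $D$, then the $\Phi$ and $\Phi^{-1}$ factors cancel by convolution invertibility, and $Q(ab,-)$ is trivial on $D$. The same holds in the second variable. It is cleanest to phrase this categorically. Set
$$\sigma^2_{X,Y}:=c_{Y,X}c_{X,Y}$$
for comodules $X,Y$ in $\mathscr{M}^H$. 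Then
$$\sigma^2_{X\otimes Y,Z}=a\,(\sigma^2_{X,Z}\text{-terms})\,a^{-1}$$
by the hexagon axioms, so $\sigma^2_{X\otimes Y,Z}=\id$ whenever $\sigma^2_{X,Z}=\id$ and $\sigma^2_{Y,Z}=\id$. Triviality on the unit $1_H$ follows from normalization.

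First extend triviality to pairs $(a,c)$ with $a,c\in C$. Next, since $C$ is a subcoalgebra with $S(C)$ contained in the algebra it generates, the comodules with coefficients in $C$ generate a monoidal subcategory whose coefficient coalgebra is the subalgebra generated by $C$, which is $H$ by hypothesis. Therefore $\sigma^2$ is trivial on all comodules, and $H$ is cotriangular.

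The main obstacle is whether "generated" includes closure under the coquasi-antipode, that is, duals. If it does, I need triviality of $\sigma^2$ on duals. This follows by naturality of the braiding and rigidity, using bijectivity of $S$ from \cite[Proposition 11.15(iii)]{BCPV19}. A secondary point is handling the reassociator factors when expanding the monodromy of products explicitly; the categorical hexagon formulation avoids that bookkeeping.
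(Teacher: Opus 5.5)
Your proposal is correct and follows essentially the same route as the paper. Sufficiency comes from Corollary \ref{coro:CQT}, which, as you note and as the paper itself remarks after the proof, may replace the given Majid bimodule structure on $M$ by the one induced from $h\vartriangleright v=R(v_{-1},h)v_0$. Necessity comes from Lemma \ref{lem:RmidH}, after which $R(a_1,b_1)R(b_2,a_2)=\varepsilon(a)\varepsilon(b)$ is propagated from the coefficient coalgebra to all of $H$ via \eqref{eq:CQT2} and \eqref{eq:CQT3}; your hexagon/monodromy formulation is a cleaner packaging of this last step, which the paper states in one line.
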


\begin{proof}
According to Corollary \ref{coro:CQT}, if $R(a_1,b_1)R(b_2, a_2)=\varepsilon(a)\varepsilon(b)$ for all $a,b\in H$, then we can construct a cotriangular coquasi-Hopf algebra structure on $\operatorname{CoT}_H(M)$ that is concentrated in degree $0$. Conversely, suppose that $\operatorname{CoT}_H(M)$ admits a coquasitriangular structure that is concentrated in degree $0$. Lemma \ref{lem:RmidH} implies that for any elements $a, b \in \operatorname{cf}^L(M^{co H})$ or $a,b\in \operatorname{cf}^R({}^{co H}M)$, we have
$R(a_1,b_1)R(b_2, a_2)=\varepsilon(a)\varepsilon(b)$. Since $H$ is generated by $\operatorname{cf}^L(M^{co H})$ or $\operatorname{cf}^R({}^{co H}M)$, it follows from (\ref{eq:CQT2}) and (\ref{eq:CQT3}) that $$R(a_1, b_1) R(b_2, a_2) = \varepsilon(a)\varepsilon(b)$$
for all $a, b \in H$.
\end{proof}
Note that the coquasitriangular structure in Proposition \ref{prop:CQTiff} is actually cotriangular. Moreover, we endow $M$ with a new Majid bimodule structure, thereby inducing a new coquasi-Hopf algebra structure.
It should also be emphasized that, since every Hopf algebra is a coquasi-Hopf algebra with trivial reassociator, the preceding conclusions immediately yield the corresponding results in the Hopf algebraic setting.
\subsection{Coquasitriangular and coribbon structures on coquasi-Hopf algebras}
We point out that, as the antipode of a coquasitriangular coquasi-Hopf algebra $H$ is bijective (\cite[Proposition 11.15(iii)]{BCPV19}), Remark \ref{rm:S(H0)<H0} implies $S(H_0)\subseteq H_0$. Consequently, by Lemma \ref{lem:gr}, $\operatorname{gr}H$ can be defined naturally. In Propositions \ref{prop:CotHMCQT} and \ref{prop:CQTiff}, we considered when $\operatorname{CoT}_{H_{[0]}}(H_{[1]}/H_{[0]})$ admits a coquasitriangular structure. A natural question then arises: does Theorem \ref{thm:embedding} have a coquasitriangular counterpart?

In preparation for answering this question, we first prove the following lemma.
\begin{lemma}\label{lem:grHCQT}
Let $(H,R)$ be a coquasitriangular coquasi-Hopf algebra over $\k$, and let $\operatorname{gr} H$ denote the graded coquasi-Hopf algebra constructed in Lemma \ref{lem:gr}. Define a linear map $\operatorname{gr} R:\operatorname{gr} H \otimes \operatorname{gr} H \rightarrow \k$ on homogeneous elements $a,b\in \operatorname{gr} H$ by
$$
\operatorname{gr} R(a,b):=
\begin{cases}
R(a,b), & \text{if } a,b\in H_{[0]}, \\
0, & \text{otherwise}.
\end{cases}
$$ Then $(\operatorname{gr} H, \operatorname{gr} R)$ remains coquasitriangular.
\end{lemma}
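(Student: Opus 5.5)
We must check that $\operatorname{gr} R$ is convolution-invertible and satisfies (\ref{eq:CQT1})--(\ref{eq:CQT3}) on $\operatorname{gr} H$. Throughout we use that $\operatorname{gr} H$ is graded: $\Delta_{\operatorname{gr} H}$ and $m_{\operatorname{gr} H}$ preserve total degree, and $\Phi_{\operatorname{gr} H}$ is concentrated in degree $0$ by Lemma \ref{lem:gr}. Moreover $H_{[0]}$, the degree-$0$ part, is a coquasi-Hopf subalgebra by Lemma \ref{lem:Majidcoradical}. Hence $R|_{H_{[0]}\otimes H_{[0]}}$ is a coquasitriangular structure on $H_{[0]}$.

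\emph{Invertibility and (\ref{eq:CQT2}), (\ref{eq:CQT3}).} Define $(\operatorname{gr} R)^{-1}$ to equal $R^{-1}$ on $H_{[0]}\otimes H_{[0]}$ and $0$ otherwise. For homogeneous $a,b$, the terms $a_1,b_1,a_2,b_2$ in $\Delta_{\operatorname{gr} H}$ have degrees summing to $\deg a$ and $\deg b$. So the convolution products vanish unless $\deg a=\deg b=0$, and in degree $0$ they reduce to the identities in $H_{[0]}$. For (\ref{eq:CQT2}), the left side $\operatorname{gr} R(ab,c)$ is nonzero only if $\deg a+\deg b=0=\deg c$. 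On the right side every factor ($\Phi_{\operatorname{gr} H}$ or $\operatorname{gr} R$) forces its arguments into degree $0$. Since each of $a,b,c$ appears in some factor and all iterated coproduct pieces must then be in degree $0$, the right side also vanishes unless all of $a,b,c$ have degree $0$. In that case both sides are the corresponding identity for $R$ in $H_{[0]}$. The argument for (\ref{eq:CQT3}) is the same.

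\emph{(\ref{eq:CQT1}), the main obstacle.} The identity $\operatorname{gr}R(a_1,b_1)a_2b_2=b_1a_1\operatorname{gr}R(a_2,b_2)$ is not concentrated in degree $0$. For homogeneous $a\in H_{[m]}/H_{[m-1]}$ and $b\in H_{[n]}/H_{[n-1]}$, only the components with $a_1,b_1\in H_{[0]}$ survive on the left. That is, we need, modulo $H_{[m+n-1]}$,
$$
R(\pi_0(x_1),\pi_0(y_1))\,x_2y_2 \equiv y_1x_1\,R(\pi_0(x_2),\pi_0(y_2)),
$$
where $x,y\in H$ are lifts of $a$ and $b$, and $\pi_0$ denotes the degree-$0$ component of the coproduct pieces. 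I would derive this from (\ref{eq:CQT1}) in $H$: $R(x_1,y_1)x_2y_2=y_1x_1R(x_2,y_2)$. Expand $\Delta(x)\in\sum_{i+j=m}H_{[i]}\otimes H_{[j]}$ and $\Delta(y)$ similarly, and reduce modulo $H_{[m+n-1]}$. A term with $x_1\in H_{[i]}$, $i>0$, pairs with $x_2\in H_{[m-i]}$, so its product $x_2y_2$ lies in a filtration degree at most $m+n-i-j$, which is below $m+n$ whenever $i+j>0$. Hence only the terms with $x_1,y_1$ in filtration degree $0$ contribute to the top degree. The subtlety is that the decomposition $\Delta(x)=\sum x_1^{(i)}\otimes x_2^{(j)}$ is only well defined modulo lower filtration. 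I would handle this using the well-definedness of $\Delta_{\operatorname{gr}H}$ from Lemma \ref{lem:gr}, and by noting that $R$ is evaluated on genuine elements, not classes, in the low-degree slots. Care is needed because $R$ need not vanish on $H_{[i]}$ for $i>0$. However, such terms land in lower filtration and so disappear in $H_{[m+n]}/H_{[m+n-1]}$. The right side is treated symmetrically, which yields (\ref{eq:CQT1}) for $\operatorname{gr} R$.
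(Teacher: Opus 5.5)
Your proposal is correct and matches the paper's approach. The paper gives no details beyond deferring to the dual-Chevalley analogue \cite[Lemma 5.25]{Yu26}, and your argument is the natural transfer of that proof to the standard filtration. Invertibility and the identities for $\Phi$ and $R$ hold in degree $0$ because $\Phi_{\operatorname{gr}H}$ and $\operatorname{gr}R$ are concentrated there; \eqref{eq:CQT1} follows by reducing the identity in $H$ modulo $H_{[m+n-1]}$, using that $\{H_{[n]}\}$ is an algebra and coalgebra filtration (Lemma \ref{lem:Majidcoradical}(2)).
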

\begin{proof}
The proof is analogous to that of \cite[Lemma 5.25]{Yu26} in the coquasi-Hopf algebra case with the dual Chevalley property.
\end{proof}
We are now in a position to obtain the coquasitriangular version of Theorem \ref{thm:embedding}.
\begin{proposition}\label{prop:CQTembedding}
Let $(H,R)$ be a coquasitriangular coquasi-Hopf algebra over $\k$, and let $\operatorname{gr} H$ denote the graded coquasi-Hopf algebra constructed in Lemma \ref{lem:gr}. Then $\operatorname{CoT}_{H_{[0]}}(H_{[1]}/H_{[0]})$ admits a graded coquasitriangular coquasi-Hopf algebra structure, and there is a graded coquasitriangular coquasi-Hopf algebra embedding $\psi:\operatorname{gr} H\hookrightarrow \operatorname{CoT}_{H_{[0]}}(H_{[1]}/H_{[0]})$ such that $\psi\mid_{H_{[0]}\oplus H_{[1]}/H_{[0]}}=\id$.
\end{proposition}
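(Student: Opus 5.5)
The plan is to define the braiding on the cotensor side by extension by zero, and then to transport the coquasitriangular identities from $\operatorname{gr}H$ through the embedding of Theorem \ref{thm:embedding}. First, $S$ is bijective by \cite[Proposition 11.15(iii)]{BCPV19}, so $S(H_0)\subseteq H_0$ by Remark \ref{rm:S(H0)<H0}. Hence Lemma \ref{lem:gr}, Theorem \ref{thm:embedding} and Lemma \ref{lem:grHCQT} all apply, and they give:
\begin{itemize}
\item the graded coquasi-Hopf embedding $\psi:\operatorname{gr}H\hookrightarrow \operatorname{CoT}_{H_{[0]}}(H_{[1]}/H_{[0]})$, which is the identity in degrees $0$ and $1$;
\item the coquasitriangular structure $\operatorname{gr}R$ on $\operatorname{gr}H$, concentrated in degree $0$.
\end{itemize}
Restricting $R$ to $H_{[0]}\otimes H_{[0]}$ makes $H_{[0]}$ coquasitriangular. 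On $\operatorname{CoT}_{H_{[0]}}(H_{[1]}/H_{[0]})$ I define $R'$ exactly as in Proposition \ref{prop:CotHMCQT}: $R'$ equals $R$ on $H_{[0]}\otimes H_{[0]}$ and vanishes on all other pairs of homogeneous components. By construction $R'(\psi\otimes\psi)=\operatorname{gr}R$, because both sides vanish outside degree $0$ and $\psi$ is the identity on $H_{[0]}$. So once $R'$ is shown to be coquasitriangular, $\psi$ is automatically a morphism of coquasitriangular coquasi-Hopf algebras.

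The remaining conditions are then checked on $R'$ as follows.
\begin{itemize}
\item \textbf{Invertibility and \eqref{eq:CQT2}, \eqref{eq:CQT3}.} Convolution invertibility holds with inverse $R^{-1}$ extended by zero. Identities \eqref{eq:CQT2} and \eqref{eq:CQT3} reduce to degree $0$, because $\Phi'$ and $R'$ are concentrated in degree $0$ and the multiplication is graded.
\item \textbf{Identity \eqref{eq:CQT1} in degree $1$.} For $l\in H_{[0]}$ and $x\in M:=H_{[1]}/H_{[0]}$, one needs
\begin{equation*}
R(l_1,x_{-1})\,l_2\cdot x_0=x_0\cdot l_1\,R(l_2,x_1),\qquad R(x_{-1},l_1)\,x_0\cdot l_2=l_1\cdot x_0\,R(x_1,l_2).
\end{equation*}
These are the degree-$1$ components of \eqref{eq:CQT1} for $R$ in $H$, applied to $l\in H_{[0]}$ and a lift $\tilde x\in H_{[1]}$. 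Indeed, modulo $H_{[0]}$, the terms of \eqref{eq:CQT1} in which $R$ is evaluated on the degree-$1$ part vanish in $\operatorname{gr}H$. Equivalently, they are the identities for $\operatorname{gr}R$ on $H_{[0]}\otimes\operatorname{gr}H(1)$, which Lemma \ref{lem:grHCQT} provides, and these transfer verbatim to $M\subseteq \operatorname{CoT}$ because $\psi$ is the identity there.
\end{itemize}

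The main obstacle is extending \eqref{eq:CQT1} from degree $\le 1$ to all of $\operatorname{CoT}$. The image of $\psi$ is generally not all of $\operatorname{CoT}$, so this cannot be transported from $\operatorname{gr}H$. I expect it to be the same step Proposition \ref{prop:CotHMCQT} dispatched briefly, and I would argue it via the universal property of Lemma \ref{lemma:universal}. Consider the two maps $\operatorname{CoT}\otimes\operatorname{CoT}\to\operatorname{CoT}$ given by
\begin{equation*}
a\otimes b\mapsto R'(a_1,b_1)\,a_2b_2 \quad\text{and}\quad a\otimes b\mapsto b_1a_1\,R'(a_2,b_2).
\end{equation*}
The plan is to show both are coalgebra maps for a suitably twisted coalgebra structure on the source. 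The naive approach treats $R'$ as a central twist, but because of $\Phi'$ and of \eqref{eq:CQT2}, \eqref{eq:CQT3} this has to be done with care. If that fails, I would instead argue by induction on degree: the multiplication $m'_n$ is built from $\Delta^{n-1}$ followed by $(m'_1)^{\otimes n}$, and $R'$ only sees degree $0$, so the degree-$n$ component of each side is an iterated tensor product of the degree-$1$ identities already established. Finally, equality of the projections $\pi_0$ and $\pi_1$ of both sides gives equality everywhere, by the uniqueness part of Lemma \ref{lemma:universal}.
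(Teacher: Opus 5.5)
Your proposal is correct and follows the paper's proof. Both extend $R_{[0]}$ by zero, obtain \eqref{eq:CQT1} on $H_{[0]}\otimes H_{[1]}/H_{[0]}\oplus H_{[1]}/H_{[0]}\otimes H_{[0]}$ from $\operatorname{gr}R$ through $\psi$, and then propagate it to all degrees, a step the paper only asserts, whereas your universal-property route actually works and needs no special care about $\Phi'$. Since $m'$ is a coalgebra map, both $a\otimes b\mapsto R'(a_1,b_1)a_2b_2$ and $a\otimes b\mapsto b_1a_1R'(a_2,b_2)$ are coalgebra maps out of $\operatorname{CoT}\otimes\operatorname{CoT}$ with coproduct $a\otimes b\mapsto (a_1\otimes b_1)\otimes(a_3\otimes b_3)R'^{-1}(a_2,b_2)$ and counit $R'$, so the uniqueness in Lemma \ref{lemma:universal} reduces everything to the degree $\le 1$ identities you already have.
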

\begin{proof}
By Lemma \ref{lem:grHCQT}, $(\operatorname{gr} H, \operatorname{gr} R)$ is coquasitriangular, where $\operatorname{gr} R$ is constructed in Lemma \ref{lem:grHCQT}. Theorem \ref{thm:embedding} implies that $$\psi:\operatorname{gr} H\hookrightarrow \operatorname{CoT}_{H_{[0]}}(H_{[1]}/H_{[0]})$$ is a graded coquasi-Hopf algebra embedding such that $\psi\mid_{H_{[0]}\oplus H_{[1]}/H_{[0]}}=\id$. It follows that $\operatorname{gr} R\mid_{H_{[0]} \otimes H_{[1]}/H_{[0]} \oplus H_{[1]}/H_{[0]} \otimes H_{[0]}} $ satisfies (\ref{eq:CQT1}).
Note that $R_{[0]}:=R\mid_{H_{[0]}\otimes H_{[0]}}$ is a coquasitriangular structure on $H_{[0]}$, making $(H_{[0]}, R_{[0]})$ a coquasitriangular coquasi-Hopf algebra. We extend $R_{[0]}$ trivially to be a function $R^\prime$ on $ \operatorname{CoT}_{H_{[0]}}(H_{[1]}/H_{[0]}) \otimes  \operatorname{CoT}_{H_{[0]}}(H_{[1]}/H_{[0]})$ defined by $$R^\prime(a,b):=\left\{
\begin{aligned}
R(a,b), &\text{ if} ~~~ a,b\in H_{[0]}; \\
0,\;\;\;\;&\text{ otherwise}.
\end{aligned}
\right.$$
We now claim that $(\operatorname{CoT}_{H_{[0]}}(H_{[1]}/H_{[0]}), R^\prime)$ is coquasitriangular. Indeed, the definition of $R^\prime$ immediately implies both its convolution invertibility and the identities \eqref{eq:CQT2} and \eqref{eq:CQT3}. Next we establish \eqref{eq:CQT1}. Since $\psi\mid_{H_{[0]}\oplus H_{[1]}/H_{[0]}}=\id$, it follows that $R^\prime \mid_{H_{[0]} \otimes H_{[1]}/H_{[0]} \oplus H_{[1]}/H_{[0]} \otimes H_{[0]}} $ satisfies (\ref{eq:CQT1}).
Note that the $H_{[0]}$-Majid bimodule structure on $H_{[1]}/H_{[0]}$ is induced by the multiplication of $\operatorname{gr} H$. By the definitions of $R^\prime$ and the multiplication on $\operatorname{CoT}_{H_{[0]}}(H_{[1]}/H_{[0]})$, and the convolution invertibility of $R$, we can show that (\ref{eq:CQT1}) holds for all elements $\sum x^1\otimes x^2\otimes \cdots \otimes x^n \in (H_{[1]}/H_{[0]})^{\Box n}$ and $\sum y^1\otimes y^2\otimes \cdots \otimes y^m \in (H_{[1]}/H_{[0]})^{\Box m}$.
\end{proof}

\begin{remark}\rm
It should be pointed out that in Corollary \ref{coro:CQT}, we considered the construction of a coquasitriangular coquasi-Hopf algebra structure on $\operatorname{CoT}_{H}(M)$ from a given cotriangular Hopf algebra $H$ and an $H$-Majid bimodule $M$. In particular, Proposition \ref{prop:CQTembedding} shows that $H_{[0]}$ forms a coquasi-Hopf algebra and $H_{[1]}/H_{[0]}$ forms an $H_{[0]}$-Majid bimodule, and it also provides a coquasitriangular structure on $\operatorname{CoT}_{H_{[0]}}(H_{[1]}/H_{[0]})$. It should be noted, however, that the multiplication is defined differently in these two constructions. Moreover, we remark that the existence of a coquasitriangular structure $R^\prime$ on $\operatorname{CoT}_H(M)$ concentrated in degree $0$, with $R^\prime\mid_{H\otimes H}$ being the coquasitriangular structure on $H$, can be proved by checking only that $R^\prime\mid_{H\otimes M\oplus M\otimes H}$ satisfies \eqref{eq:CQT1}.
\end{remark}

\begin{corollary}
Let $(H,R)$ be a coquasitriangular coquasi-Hopf algebra over $\k$, and let $\operatorname{gr} H$ denote the graded coquasi-Hopf algebra constructed in Lemma \ref{lem:gr}. Then for any elements $a, b \in \operatorname{cf}^L((H_{[1]}/H_{[0]})^{co H_{[0]}})$ or $a,b\in \operatorname{cf}^R({}^{co H_{[0]}}(H_{[1]}/H_{[0]}))$, we have
$R(a_1,b_1)R(b_2, a_2)=\varepsilon(a)\varepsilon(b)$. In particular, $H_{[0]}$ is cotriangular whenever it is generated by either $\operatorname{cf}^L((H_{[1]}/H_{[0]})^{co H_{[0]}})$ or $\operatorname{cf}^R({}^{co H_{[0]}}(H_{[1]}/H_{[0]}))$.
\end{corollary}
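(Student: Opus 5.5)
The plan is to reduce the statement to Lemma \ref{lem:RmidH}, applied to the cotensor coquasi-Hopf algebra $\operatorname{CoT}_{H_{[0]}}(H_{[1]}/H_{[0]})$ with the coquasitriangular structure $R^\prime$ from Proposition \ref{prop:CQTembedding}. Set $M:=H_{[1]}/H_{[0]}$. By Lemma \ref{lem:gradedMajid}, $M$ is an $H_{[0]}$-Majid bimodule. By Proposition \ref{prop:CQTembedding}, $\operatorname{CoT}_{H_{[0]}}(M)$ is a graded coquasitriangular coquasi-Hopf algebra whose structure $R^\prime$ extends $R_{[0]}=R|_{H_{[0]}\otimes H_{[0]}}$ by zero, so it is concentrated in degree $0$. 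The hypotheses of Lemma \ref{lem:RmidH} therefore hold with $H$ replaced by $H_{[0]}$. That lemma gives
$$R^\prime(a_1,b_1)R^\prime(b_2,a_2)=\varepsilon(a)\varepsilon(b)$$
for all $a,b\in\operatorname{cf}^L(M^{co H_{[0]}})$, and likewise for all $a,b\in \operatorname{cf}^R({}^{co H_{[0]}}M)$.

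Next, translate this back to $R$. The coefficient coalgebras are subcoalgebras of $H_{[0]}$, so $a_1,a_2,b_1,b_2$ all lie in $H_{[0]}$. Since $R^\prime$ agrees with $R$ on $H_{[0]}\otimes H_{[0]}$, the identity above is exactly $R(a_1,b_1)R(b_2,a_2)=\varepsilon(a)\varepsilon(b)$. This proves the first assertion.

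For the ``in particular'' part, suppose $H_{[0]}$ is generated as an algebra by one of these coefficient coalgebras. Apply Proposition \ref{prop:CQTiff} to the coquasitriangular coquasi-Hopf algebra $(H_{[0]},R_{[0]})$ and the Majid bimodule $M$. Its generation hypothesis is satisfied, and Proposition \ref{prop:CQTembedding} supplies a coquasitriangular structure on $\operatorname{CoT}_{H_{[0]}}(M)$ concentrated in degree $0$. The ``only if'' direction of Proposition \ref{prop:CQTiff} then says $H_{[0]}$ is cotriangular.

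If one prefers not to rely on Proposition \ref{prop:CQTiff}, one can argue directly. Let $\Omega(a,b):=R(a_1,b_1)R(b_2,a_2)$. Using \eqref{eq:CQT2} and \eqref{eq:CQT3}, one shows that the set of $a$ with $\Omega(a,b)=\varepsilon(a)\varepsilon(b)$ for all $b$ in a given subcoalgebra is closed under products, and then symmetrizes in $b$.

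The only step that needs care is checking that Lemma \ref{lem:RmidH} applies verbatim. Its multiplication and Majid bimodule structure must be the ones used in Proposition \ref{prop:CQTembedding}, namely those induced by $\operatorname{gr}H$. This holds because the multiplication of $\operatorname{CoT}_{H_{[0]}}(M)$ is built from the Majid bimodule structure on $M$, exactly as in Lemma \ref{lem:cotensor=coquasibialgebra}. The multiplicativity closure used in the alternative direct argument is routine, since the reassociator factors cancel.
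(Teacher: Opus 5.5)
Your proposal is correct and follows the paper's own proof. The paper likewise obtains the first assertion by applying Lemma \ref{lem:RmidH} to $\operatorname{CoT}_{H_{[0]}}(H_{[1]}/H_{[0]})$ with the degree-$0$ coquasitriangular structure $R^\prime$ from Proposition \ref{prop:CQTembedding}, and deduces the cotriangularity of $H_{[0]}$ from Proposition \ref{prop:CQTiff}; your optional direct argument via \eqref{eq:CQT2} and \eqref{eq:CQT3} is the step already carried out inside the proof of Proposition \ref{prop:CQTiff}.
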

\begin{proof}
    This follows directly from Lemma \ref{lem:RmidH}, Propositions \ref{prop:CQTiff} and \ref{prop:CQTembedding}.
\end{proof}

\begin{example}\rm
We claim that the coquasi-Hopf algebra $M(A,X,z,g,y)$ constructed in Example \ref{ex:M(A,X,z,g,y)} does not admit a coquasitriangular structure. Indeed, if $M(A,X,z,g,y)$ admits a coquasitriangular structure $R$, then we have
$$
R(A^2, X)^2 = R(A^2, X^2) = 0,
$$
and hence
$$
1 = R(A^4, A) = R(A^2, A)^2 = R(A^2, A^2).
$$
Applying \eqref{eq:CQT1} to $A^2$ and $X$, we obtain
$$
R(A^2, X) A^2 +R(A^2,A^2)A^2X= XA^2 R(A^2, 1)+ R(A^2,X).
$$
This means that $A^2X=XA^2$, which is a contradiction.
\end{example}

Next, we give an example of a coquasitriangular coquasi-Hopf algebra.
\begin{example}\rm\label{eq:M(a,b,c,d,g)}
Let $\k$ be an algebraically closed field of characteristic $0$, and let $q$ be a primitive $3$rd root of unity. We consider a coquasi-Hopf algebra structure on $$\mathcal{O}_q(SL_2)/(a^3-1,d^3-1,b^3,c^3) \otimes (\k \mathbb{Z}_2, \omega),$$ where
$
\mathcal{O}_q(SL_2)/(a^3-1,d^3-1,b^3,c^3)
$
is the finite-dimensional quantum coordinate algebra at a primitive $3$rd root of unity (see \cite{AM19}). Specifically, let $M(a,b,c,d,g)$ be the $54$-dimensional
coquasi-Hopf algebra over $\k$ generated by $a,b,c,d,g$ with relations
\begin{eqnarray*}
&ba=qab,\;\; ca=qac,\;\; db=qbd,\;\; dc=qcd,\;\; bc=cb,\\
&ad-q^{2}bc=1,\;\; da-ad=(q-q^{-1})bc,\\
&a^3=d^3=g^2=1,\;\; b^3=c^3=0,\\
&ga=ag,\;\; gb=bg,\;\; gc=cg,\;\; gd=dg.
\end{eqnarray*}
Its coalgebra structure, reassociator, and coquasi-antipode are given by
\begin{eqnarray*}
&&\Delta(a)=a\otimes a+b\otimes c,\;\;\varepsilon(a)=1,\;\; S(a)=d,\\
&&\Delta(b)=a\otimes b+b\otimes d,\;\;\varepsilon(b)=0,\;\; S(b)=-qb, \\
&&\Delta(c)=c\otimes a+d\otimes c,\;\;\varepsilon(c)=0, \;\; S(c)=-q^2c,\\
&&\Delta(d)=c\otimes b+d\otimes d,\;\;\varepsilon(d)=1,\;\; S(d)=a,\\
&&\Delta(g)=g\otimes g,\;\;\varepsilon(g)=1,\;\; S(g)=g,\\
&&\alpha(a^{r_a}b^{r_b}c^{r_c}d^{r_d}g^{r_g})=\delta_{r_b,0}\delta_{r_c,0},\;\; \beta(a^{r_a}b^{r_b}c^{r_c}d^{r_d}g^{r_g})=(-1)^{r_g}\delta_{r_b,0}\delta_{r_c,0},\\
&&\Phi(a^{r_a}b^{r_b}c^{r_c}d^{r_d}g^{r_g}, a^{s_a}b^{s_b}c^{s_c}d^{s_d}g^{s_g}, a^{t_a}b^{t_b}c^{t_c}d^{t_d}g^{t_g})=(-1)^{r_gs_gt_g}\delta_{r_b,0}\delta_{r_c,0}\delta_{s_b,0}\delta_{s_c,0}\delta_{t_b,0}\delta_{t_c,0}.
\end{eqnarray*}
Since $d=a^2(1+q^2bc)$, a basis of $M(a,b,c,d,g)$ is given by
$$
\{a^{r_a}b^{r_b}c^{r_c}g^{r_g}\mid 0\le r_a,r_b,r_c\le 2,\ 0\le r_g\le 1\}.
$$
We have $$M(a,b,c,d,g)_0=\k 1 \oplus \k g\oplus  C_1 \oplus D_1\oplus C_g\oplus D_g,$$
where the comatrix coalgebras associated with $C_1, D_1, C_g$ and $D_g$ are represented by the matrices
\begin{eqnarray*}
\C_1=\begin{pmatrix}
a & b \\
c & d
\end{pmatrix},\;\;
\D_1=\begin{pmatrix}
a^2 & ab & b^2\\
(1+q^2)ac & ad+qbc & (1+q^2)bd\\
c^2 & cd & d^2
\end{pmatrix},
\end{eqnarray*}
\begin{eqnarray*}
\C_g=\begin{pmatrix}
ag & bg \\
cg & dg
\end{pmatrix},\;\;
\D_g=\begin{pmatrix}
a^2g & abg & b^2g\\
(1+q^2)acg & (ad+qbc)g & (1+q^2)bdg\\
c^2g & cdg & d^2g
\end{pmatrix},
\end{eqnarray*} and $C_1, D_1, C_g$ and $D_g$ are precisely the linear spans of the entries of the respective matrices. Although $M(a,b,c,d,g)$ does not possess the dual Chevalley property, it is nevertheless generated as an algebra by $M(a,b,c,d,g)_0$. Moreover, $M(a,b,c,d,g)$ is coquasitriangular, and a coquasitriangular structure $R$ on $M(a,b,c,d,g)$ can be constructed by
$$
R(a^{r_a}b^{r_b}c^{r_c}g^{r_g}, a^{s_a}b^{s_b}c^{s_c}g^{s_g})
=\sqrt{-1}^{r_g s_g} r(a^{r_a}b^{r_b}c^{r_c}, a^{s_a}b^{s_b}c^{s_c}).
$$
Here $r$ is defined on the generators $a,b,c$ by the following table, motivated by (4.3) in the proof of \cite[Theorem 4.1(2)]{AM19}:
$$
\begin{array}{|c|c|c|c|c|}
\hline
r(x,y) & a & b & c  \\ \hline
a & q^2 & 0 & 0  \\\hline
b & 0 & 0 & q^2-1  \\\hline
c & 0 & 0 & 0  \\
\hline
\end{array}
$$
Then by
\begin{eqnarray*}
&r(xy,z)=\sum r(x,z_{1})r(y,z_{2}),\\
&r(x,yz)=\sum r(x_{1},z)r(x_{2},y),\\
&r(1,x)=r(x,1)=\varepsilon(x),
\end{eqnarray*}
we obtain all values of $r$ on $\mathcal{O}_q(SL_2)/(a^3-1,d^3-1,b^3,c^3)$. We may consider $k\langle x\rangle \# M(a,b,c,d,g)$ to obtain the coquasi-Hopf algebra $M(a,b,c,d,g,x)$. Specifically, we adjoin to $M(a,b,c,d,g)$ a generator $x$ subject to the relations
$$
ax=xa,\;\; bx=xb,\;\; cx=xc,\;\; dx=xd,\;\; gx=xg,
$$
where $x$ is a primitive element. We can show that $M(a,b,c,d,g,x)$ is a coquasi-Hopf algebra whose reassociator is concentrated in degree $0$, and $$M(a,b,c,d,g,x)_{[0]}=M(a,b,c,d,g).$$ Note that $M(a,b,c,d,g,x)$ admits a coquasitriangular structure concentrated in degree $0$.
\end{example}

The definition of coribbon Hopf algebras can be found in \cite[Definition 3]{Wa03}. We may therefore naturally give the following definition of coribbon coquasi-Hopf algebras.
\begin{definition}
A coribbon coquasi-Hopf algebra is a coquasitriangular
coquasi-Hopf algebra $(H,R)$ equipped with a convolution-invertible
functional $\nu:H\to k$ such that
\begin{equation}
\nu(1)=1,\;\; \nu\circ S=\nu, \label{eq:coribbon1}
\end{equation}
\begin{equation}
\nu(h_{(1)})h_{(2)}=h_{(1)}\nu(h_{(2)}),\label{eq:coribbon2}
\end{equation}
\begin{equation}
\nu(ab)=R^{-1}(a_{(1)},b_{(1)})
\nu(a_{(2)})\nu(b_{(2)})
R^{-1}(b_{(3)},a_{(3)}),\label{eq:coribbon3}
\end{equation}
for all $a,b,h\in H$.
\end{definition}
Clearly, a cotriangular coquasi-Hopf algebra is necessarily coribbon, with $\nu$ taken to be $\varepsilon$. According to Corollary \ref{coro:CQT}, $\operatorname{CoT}_H(M)$ is coribbon in the case that $H$ is cotriangular and $M$ is an $H$-Majid bimodule.

We next prove that $\operatorname{gr}H$ is coribbon whenever $H$ is coribbon.
\begin{lemma}\label{lem:grcoribbon}
Let $(H,R,\nu)$ be a coribbon coquasi-Hopf algebra over $\k$, and let $\operatorname{gr} H$ denote the graded coquasi-Hopf algebra constructed in Lemma \ref{lem:gr}. Let $\operatorname{gr}\nu$ be the linear map $\operatorname{gr}H\to \k$ that equals $\nu$ on $H_{[0]}$ and vanishes otherwise.
Then $(\operatorname{gr} H, \operatorname{gr} R, \operatorname{gr} \nu)$ is a coribbon coquasi-Hopf algebra.
\end{lemma}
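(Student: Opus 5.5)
By Lemma \ref{lem:grHCQT}, $(\operatorname{gr} H,\operatorname{gr} R)$ is already a coquasitriangular coquasi-Hopf algebra. So it only remains to check that $\operatorname{gr}\nu$ is convolution invertible and satisfies \eqref{eq:coribbon1}--\eqref{eq:coribbon3}. The guiding principle, used throughout, is that every structure map of $\operatorname{gr}H$ built in Lemma \ref{lem:gr} is graded. The multiplication adds degrees, the comultiplication splits a degree-$m$ element into $\bigoplus_{i+j=m}\operatorname{gr}H(i)\otimes\operatorname{gr}H(j)$, $S_{\operatorname{gr}H}$ preserves degree, and $\operatorname{gr}R$, $\operatorname{gr}\nu$, $\varepsilon_{\operatorname{gr}H}$ vanish outside degree $0$. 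Each identity will therefore be checked on homogeneous elements, treating degree $0$ and positive degree separately.

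\emph{Invertibility and \eqref{eq:coribbon1}.} Let $\nu^{-1}$ be the convolution inverse of $\nu$ in $H^*$, and let $\operatorname{gr}(\nu^{-1})$ equal $\nu^{-1}$ on $H_{[0]}$ and vanish in positive degree. For $\bar x$ of degree $0$ we have $\Delta_{\operatorname{gr}H}(\bar x)=\Delta(x)\in H_{[0]}\otimes H_{[0]}$, so the products $\operatorname{gr}\nu*\operatorname{gr}(\nu^{-1})$ and $\operatorname{gr}(\nu^{-1})*\operatorname{gr}\nu$ agree with $\nu*\nu^{-1}=\varepsilon$. For $\bar x$ of degree $m>0$, every tensor term has a factor of positive degree, so both products vanish, matching $\varepsilon_{\operatorname{gr}H}$. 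Next, $\operatorname{gr}\nu(1)=\nu(1)=1$. Since $S_{\operatorname{gr}H}$ restricts to $S$ on $H_{[0]}$ (Lemma \ref{lem:Majidcoradical}(1)) and preserves positive degree, $\operatorname{gr}\nu\circ S_{\operatorname{gr}H}=\operatorname{gr}\nu$ holds in each degree.

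\emph{Centrality \eqref{eq:coribbon2}.} For $\bar x$ of degree $m$, the left side is $\operatorname{gr}\nu(\bar x_1^{(0)})\,\bar x_2^{(m)}$ and the right side is $\bar x_1^{(m)}\operatorname{gr}\nu(\bar x_2^{(0)})$, where $x_1^{(0)}\otimes x_2^{(m)}$ denotes the component of $\Delta(x)$ in $H_{[0]}\otimes H_{[m]}$ modulo $H_{[0]}\otimes H_{[m-1]}$. This is the main obstacle. In $H$ we have $\nu(x_1)x_2=x_1\nu(x_2)$, and we want to project this identity to the degree-$m$ quotient $H_{[m]}/H_{[m-1]}$. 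To do so, I would choose a decomposition $\Delta(x)=\sum_{i+j=m}x_1^{(i)}\otimes x_2^{(j)}$ with $x_1^{(i)}\in H_{[i]}$ and $x_2^{(j)}\in H_{[j]}$; Lemma \ref{lem:Majidcoradical}(2) guarantees such a decomposition exists because the standard filtration is a coalgebra filtration. Applying $\nu\otimes\id$ and reducing modulo $H_{[m-1]}$ kills every term with $j<m$, leaving $\nu(x_1^{(0)})\overline{x_2^{(m)}}$. Applying $\id\otimes\nu$ kills every term with $i<m$, leaving $\overline{x_1^{(m)}}\nu(x_2^{(0)})$. Both residues equal $\overline{\nu(x_1)x_2}=\overline{x_1\nu(x_2)}$, so they are equal. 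One must also check that the result does not depend on the choice of decomposition; this is the same well-definedness issue as for $\Delta_{\operatorname{gr}H}$ in Lemma \ref{lem:gr}, and it can be settled the same way.

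\emph{\eqref{eq:coribbon3}.} If $\bar a$ and $\bar b$ both have degree $0$, the identity is exactly \eqref{eq:coribbon3} in $H$, restricted to $H_{[0]}$. If $\deg\bar a+\deg\bar b>0$, the left side vanishes because $\bar a\bar b$ has positive degree. On the right side, the degree-$0$ components of $\bar a_{(1)},\bar a_{(2)},\bar a_{(3)}$ exist only when $\bar a$ has degree $0$, since degrees add along the iterated coproduct; the same holds for $\bar b$. Here $(\operatorname{gr}R)^{-1}$ is the trivial extension of $R^{-1}$, by the same argument as for $\operatorname{gr}(\nu^{-1})$. Hence every term on the right side contains a factor of $\operatorname{gr}R^{-1}$ or $\operatorname{gr}\nu$ evaluated on an element of positive degree, and the right side also vanishes. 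This completes the verification that $(\operatorname{gr}H,\operatorname{gr}R,\operatorname{gr}\nu)$ is coribbon.
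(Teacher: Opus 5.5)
Your proof is correct and takes the same route as the paper. You reduce to Lemma \ref{lem:grHCQT}, handle invertibility, \eqref{eq:coribbon1} and \eqref{eq:coribbon3} using the fact that $\operatorname{gr}\nu$ and $\operatorname{gr}R$ are concentrated in degree $0$, and obtain \eqref{eq:coribbon2} by reducing $\nu(x_1)x_2=x_1\nu(x_2)$ in $H$ modulo $H_{[m-1]}$, which you spell out more explicitly than the paper does. The separate check of independence from the chosen decomposition is unnecessary: both sides are residues of a single element of $H$, and $\Delta_{\operatorname{gr}H}$ is already well defined by Lemma \ref{lem:gr}.
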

\begin{proof}
By Lemma \ref{lem:grHCQT}, $(\operatorname{gr} H, \operatorname{gr} R)$ is coquasitriangular. Next, we show that $\operatorname{gr}\nu$ defines a coribbon structure. It suffices to verify (\ref{eq:coribbon2}) for $\operatorname{gr}\nu$, since (\ref{eq:coribbon1}), (\ref{eq:coribbon3}), and the convolution invertibility of $\operatorname{gr}\nu$ are obvious. For any $\bar{a} \in H_{[n]}/H_{[n-1]}$, there exist $b \in H_{[n-1]} $ such that $a+b\in H_{[n]}$. Then by the assumption that $\nu$ is a coribbon structure on $H$, we have $$\nu(a_1) a_2+\nu(b_1)b_2= a_1\nu(a_2)+b_1 \nu(b_2),$$
which implies that
$$
(\nu(a_1) a_2+\nu(b_1)b_2)/H_{[n-1]}=( a_1\nu(a_2)+b_1 \nu(b_2)) /H_{[n-1]}.
$$
Thus (\ref{eq:coribbon2}) follows.
\end{proof}
We can also obtain the coribbon version of Theorem \ref{thm:embedding}.
\begin{proposition}\label{prop:coribbonembedding}
Let $(H,R,\nu)$ be a coribbon coquasi-Hopf algebra over $\k$, and let $\operatorname{gr} H$ denote the graded coquasi-Hopf algebra constructed in Lemma \ref{lem:gr}. Then $\operatorname{CoT}_{H_{[0]}}(H_{[1]}/H_{[0]})$ admits a graded coribbon coquasi-Hopf algebra structure, and there is a graded coribbon coquasi-Hopf algebra embedding $\psi:\operatorname{gr} H\hookrightarrow \operatorname{CoT}_{H_{[0]}}(H_{[1]}/H_{[0]})$ such that $\psi\mid_{H_{[0]}\oplus H_{[1]}/H_{[0]}}=\id$.
\end{proposition}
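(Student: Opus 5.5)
The plan is to combine Lemma \ref{lem:grcoribbon} with Proposition \ref{prop:CQTembedding}, in the same way that Proposition \ref{prop:CQTembedding} combined Lemma \ref{lem:grHCQT} with Theorem \ref{thm:embedding}.

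First, by Lemma \ref{lem:grcoribbon}, $(\operatorname{gr} H,\operatorname{gr} R,\operatorname{gr}\nu)$ is a coribbon coquasi-Hopf algebra. By Proposition \ref{prop:CQTembedding}, $\operatorname{CoT}_{H_{[0]}}(H_{[1]}/H_{[0]})$ carries the coquasitriangular structure $R'$, which is the trivial extension of $R_{[0]}=R\mid_{H_{[0]}\otimes H_{[0]}}$. Moreover, $\psi$ is an embedding of coquasitriangular coquasi-Hopf algebras and restricts to the identity in degrees $0$ and $1$.

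Next I define $\nu'$ on $\operatorname{CoT}_{H_{[0]}}(H_{[1]}/H_{[0]})$ to equal $\nu$ on $H_{[0]}$ and to vanish on $(H_{[1]}/H_{[0]})^{\Box n}$ for $n\geq1$. Note that $\nu(H_{[0]})$ makes sense because $H_{[0]}\subseteq H$.

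Most of the coribbon axioms reduce to degree $0$, because $\nu'$, $R'$, $S'$, the multiplication and $\Delta$ are all graded and $\nu'$, $R'$ vanish outside degree $0$.
\begin{itemize}
\item Convolution invertibility: the inverse of $\nu'$ is the trivial extension of $(\nu\mid_{H_{[0]}})^{-1}$.
\item Condition \eqref{eq:coribbon1}: $\nu'(1)=1$ is clear, and $\nu'\circ S'=\nu'$ holds since $S'$ preserves degree and $S'\mid_{H_{[0]}}=S\mid_{H_{[0]}}$.
\item Condition \eqref{eq:coribbon3}: for homogeneous $a,b$, both sides vanish unless $\deg a=\deg b=0$. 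In that case the identity holds because $H_{[0]}$ is a coquasi-Hopf subalgebra and $R'$ restricts to $R_{[0]}$.
\end{itemize}

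The main step is the centrality condition \eqref{eq:coribbon2}, $\nu'(h_1)h_2=h_1\nu'(h_2)$. On $H_{[0]}$ it is inherited from $H$. On $M:=H_{[1]}/H_{[0]}$, with $\Delta=\rho^L+\rho^R$, it reads $\nu(m_{-1})m_0=m_0\nu(m_1)$. This holds because $\psi$ is the identity in degree $1$ and $\operatorname{gr}\nu$ satisfies \eqref{eq:coribbon2} on $\operatorname{gr}H(1)$ by Lemma \ref{lem:grcoribbon}. Indeed, the bicomodule structure of $M$ is the one induced from $\Delta_{\operatorname{gr}H}$ in Lemma \ref{lem:gradedMajid}.

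For $x=\sum m^1\otimes\cdots\otimes m^n\in M^{\Box n}$ with $n\geq2$, only the extreme terms of $\Delta(x)$ contribute. The left side becomes $\sum \nu(m^1_{-1})m^1_0\otimes m^2\otimes\cdots\otimes m^n$. By the degree-one identity this equals $\sum m^1_0\nu(m^1_1)\otimes m^2\otimes\cdots\otimes m^n$. Using the cotensor condition, $m^1_0\otimes m^1_1\otimes m^2=m^1\otimes m^2_{-1}\otimes m^2_0$, I move the functional step by step to the right. Each step uses the degree-one identity on $m^i$ and the cotensor condition between $m^i$ and $m^{i+1}$. The end result is $\sum m^1\otimes\cdots\otimes m^n_0\nu(m^n_1)$, which is the right side.

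I expect this telescoping argument to be the main, if mild, obstacle. It amounts to checking that $\nu(?)$ applied to the left coaction equals $\nu(?)$ applied to the right coaction on all of $M^{\Box n}$.

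Finally, $\nu'\circ\psi=\operatorname{gr}\nu$, since both vanish in positive degree and agree on $H_{[0]}$. Hence $\psi$ is a morphism of coribbon coquasi-Hopf algebras.
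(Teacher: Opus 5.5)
Your proposal is correct and follows essentially the same route as the paper. You extend $\nu\mid_{H_{[0]}}$ trivially to $\nu'$, derive $\nu(m_{-1})m_0=m_0\nu(m_1)$ on $H_{[1]}/H_{[0]}$ from Lemma \ref{lem:grcoribbon}, telescope along the cotensor condition on $(H_{[1]}/H_{[0]})^{\Box n}$, and reduce convolution invertibility, \eqref{eq:coribbon1} and \eqref{eq:coribbon3} to degree $0$; your explicit check that $\nu'\circ\psi=\operatorname{gr}\nu$ spells out a step the paper leaves implicit.
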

\begin{proof}
By Proposition \ref{prop:CQTembedding}, there is a graded coquasitriangular coquasi-Hopf algebra embedding $\psi:\operatorname{gr} H\hookrightarrow \operatorname{CoT}_{H_{[0]}}(H_{[1]}/H_{[0]})$ such that $\psi\mid_{H_{[0]}\oplus H_{[1]}/H_{[0]}}=\id$. Define $$\nu^\prime(a):=\left\{
\begin{aligned}
\nu(a), &\text{ if} ~~~ a\in H_{[0]}; \\
0,\;\;\;\;&\text{ otherwise}.
\end{aligned}
\right.$$ Next, we show that $\nu^\prime$ is a coribbon structure on $\operatorname{CoT}_{H_{[0]}}(H_{[1]}/H_{[0]})$.
Observe that $H_{[1]}/H_{[0]}$ carries an $H_{[0]}$-bicomodule structure induced by the comultiplication. For every $x\in H_{[1]}/H_{[0]}$, we denote the bicomodule structure by
$$
\rho_{ H_{[1]}/H_{[0]}}^L(x)=x_{-1}\otimes x_0,\;\; \rho_{ H_{[1]}/H_{[0]}}^R(x)=x_0\otimes x_1.
$$
Lemma \ref{lem:grcoribbon} implies that $\operatorname{gr}H$ is coribbon. Consequently, for every $x\in H_{[1]}/H_{[0]}$, we have
$$
\nu(x_{-1}) x_0=x_0\nu (x_1).
$$
For $\sum x^1\otimes x^2\otimes \cdots \otimes x^n\in (H_{[1]}/H_{[0]})^{\Box n}$, we have
\begin{eqnarray*}
  \sum \nu(x^1_{-1}) x^1_0  \otimes x^2\otimes \cdots \otimes x^n&=&\sum x^1_{0}\nu( x^1_1)  \otimes x^2\otimes \cdots \otimes x^n\\
  &=& \sum x^1 \nu (x_{-1}^2) \otimes x^2_0\otimes \cdots \otimes x^n\\
  &=& \cdots\\
  &=& \sum x^1\otimes x^2\otimes \cdots \otimes x^{n-1}\otimes \nu (x_{-1}^{n}) x^{n}_0\\
  &=& \sum x^1\otimes x^2\otimes \cdots \otimes x^{n-1}\otimes x^{n}_0\nu(x^{n}_1).
\end{eqnarray*}
This means that $\nu^\prime$ satisfies (\ref{eq:coribbon2}). Moreover, the definition of $\nu^\prime$ immediately implies its convolution invertibility as well as the identities \eqref{eq:coribbon1} and \eqref{eq:coribbon3}. Hence $\nu^\prime$ endows $\operatorname{CoT}_{H_{[0]}}(H_{[1]}/H_{[0]})$ with a coribbon structure, making $\psi$ a morphism of coribbon coquasi-Hopf algebras.
\end{proof}

In particular, when the reassociator is trivial, the above results reduce to their Hopf algebra counterparts.
\begin{corollary}
Let $H$ be a Hopf algebra over $\k$ with antipode $S$ such that $S(H_0)\subseteq H_0$. Then $\operatorname{CoT}_{H_{[0]}}(H_{[1]}/H_{[0]})$ admits a graded Hopf algebra structure, and there is a graded Hopf algebra embedding $\psi:\operatorname{gr} H\hookrightarrow \operatorname{CoT}_{H_{[0]}}(H_{[1]}/H_{[0]})$ such that $\psi|_{H_{[0]}\oplus H_{[1]}/H_{[0]}}=\operatorname{id}$. In particular, if $H$ is coquasitriangular (respectively, coribbon), then this embedding is a morphism of coquasitriangular (respectively, coribbon) Hopf algebras.
\end{corollary}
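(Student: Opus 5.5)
The plan is to specialize Theorem \ref{thm:embedding}, Proposition \ref{prop:CQTembedding} and Proposition \ref{prop:coribbonembedding} to the case of trivial reassociator, and check that every construction stays inside the category of Hopf algebras. View $H$ as a coquasi-Hopf algebra with $\Phi=\varepsilon\otimes\varepsilon\otimes\varepsilon$ and coquasi-antipode $(S,\varepsilon,\varepsilon)$. The hypothesis $S(H_0)\subseteq H_0$ is exactly what Lemma \ref{lem:Majidcoradical} and Lemma \ref{lem:gr} require. Hence $H_{[0]}$ is a Hopf subalgebra: it is closed under multiplication and comultiplication, and it is stable under $S$ because $S$ is an algebra antimorphism, so $f=\varepsilon\otimes\varepsilon$ in \eqref{eq:S(hg)}. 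In the same way, $\operatorname{gr}H$ is a graded coquasi-Hopf algebra whose reassociator $\Phi_{\operatorname{gr}H}$ equals $\Phi$ on degree $0$ and vanishes elsewhere, and whose $\alpha_{\operatorname{gr}H}$ and $\beta_{\operatorname{gr}H}$ equal $\varepsilon$ on degree $0$ and vanish elsewhere. Since $\varepsilon_{\operatorname{gr}H}$ is also concentrated in degree $0$, these maps are just the trivial ones $\varepsilon^{\otimes 3}$ and $\varepsilon$. So $\operatorname{gr}H$ is an honest graded Hopf algebra with antipode $S_{\operatorname{gr}H}$.

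Next I would apply Theorem \ref{thm:embedding}, which produces the graded coquasi-Hopf structure on $\operatorname{CoT}_{H_{[0]}}(H_{[1]}/H_{[0]})$ and the embedding $\psi$ with $\psi|_{H_{[0]}\oplus H_{[1]}/H_{[0]}}=\id$. It then suffices to check that this structure is a Hopf algebra, which I would do in three steps.
\begin{enumerate}
\item By Lemma \ref{lem:cotensor=coquasibialgebra}, $\Phi'$ is $\Phi$ on $H_{[0]}^{\otimes3}$ and $0$ otherwise, hence $\Phi'=\varepsilon^{\otimes 3}$ on the cotensor coalgebra. Likewise $\alpha'=\beta'=\varepsilon$.
\item By \eqref{eq:associativity} with trivial $\Phi'$, the multiplication $m'$ is associative.
\item With $\alpha'=\beta'=\varepsilon$, \eqref{eq:alpha,beta} becomes $S'(a_1)a_2=\varepsilon(a)1=a_1S'(a_2)$, so $S'$ is an antipode.
\end{enumerate}
Thus $\psi$ is a graded Hopf algebra embedding. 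The same reasoning shows that the $H_{[0]}$-Majid bimodule $H_{[1]}/H_{[0]}$ of Lemma \ref{lem:gradedMajid} is an ordinary Hopf bimodule, so this is Nichols' setting. In the coquasitriangular and coribbon cases, note that $S$ is bijective, so $S(H_0)\subseteq H_0$ holds automatically by Remark \ref{rm:S(H0)<H0}. The statement then follows verbatim from Propositions \ref{prop:CQTembedding} and \ref{prop:coribbonembedding}: with $\Phi$ trivial, \eqref{eq:CQT1}--\eqref{eq:CQT3} and \eqref{eq:coribbon1}--\eqref{eq:coribbon3} are the usual Hopf-algebra axioms.

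The only point needing care is that the coquasi-antipode $S'$ built in Proposition \ref{prop:CotCQH} is a genuine antipode. A priori one might worry that the definition via $T(h\cdot m)$ involves $f$ and $\beta$ in a way that does not reduce to Nichols' antipode. The answer is simply that an antipode of a bialgebra is unique, so any $S'$ satisfying \eqref{eq:alpha,beta} with $\alpha'=\beta'=\varepsilon$ coincides with the Hopf antipode. No further computation is needed.
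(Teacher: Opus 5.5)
Your proposal is correct and takes essentially the paper's route: the paper gets this corollary by observing that, with trivial reassociator and $\alpha=\beta=\varepsilon$, Theorem \ref{thm:embedding} and Propositions \ref{prop:CQTembedding} and \ref{prop:coribbonembedding} reduce to their Hopf-algebra counterparts. Your checks, that $\Phi'$, $\alpha'$, $\beta'$ and the corresponding maps on $\operatorname{gr}H$ collapse to products of counits so that associativity and the antipode axioms hold on the nose, simply make that specialization explicit; the appeal to uniqueness of antipodes is correct but not needed.
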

Let $\k$ be an algebraically closed field.
According to \cite[Proposition 2.6]{EO04}, a finite tensor category $\mathscr{C}$ is integral, i.e., the Frobenius-Perron dimensions of all its objects are integers, if and only if $\mathscr{C}$ is tensor equivalent to the comodule category $\mathscr{M}^H$ of a finite-dimensional coquasi-Hopf algebra $H$. Moreover, by \cite[Proposition 10.1]{BCPV19} (respectively, \cite[Proposition 13.20(ii)]{BCPV19}), a finite integral braided (respectively, ribbon) tensor category is tensor equivalent to the comodule category of a finite-dimensional coquasitriangular (respectively, coribbon) coquasi-Hopf algebra. For details on ribbon tensor categories, see \cite{EGNO15}.
Consequently, we obtain a characterization of finite integral tensor categories.
\begin{corollary}
Let $\mathscr{C}$ be a finite integral tensor category over an algebraically closed field $\k$. Then $\mathscr{C}$ is tensor equivalent to $\mathscr{M}^H$ for some finite-dimensional coquasi-Hopf algebra $H$. Moreover, there is an embedding $\mathscr{M}^{\operatorname{gr} H}\hookrightarrow \mathscr{M}^{\operatorname{CoT}_{H_{[0]}}(H_{[1]}/H_{[0]})}$ of tensor categories, and $\mathscr{M}^{\operatorname{gr} H}$ contains all objects of
$\mathscr{M}^{\operatorname{CoT}_{H_{[0]}}(H_{[1]}/H_{[0]})}$ of Loewy length $\leq 2$, where $\operatorname{gr} H$ is the graded coquasi-Hopf algebra associated with the standard filtration. In particular, if $\mathscr{C}$ is braided (respectively, ribbon), then $H$ is coquasitriangular (respectively, coribbon) and the embedding above is a braided (respectively, ribbon) tensor functor.
\end{corollary}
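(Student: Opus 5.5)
The proof splits into four steps: Tannakian reconstruction, the functor induced by $\psi$, the Loewy length claim, and the braided/ribbon refinement.

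\emph{Step 1 (reconstruction).} By \cite[Proposition 2.6]{EO04}, $\mathscr{C}\simeq\mathscr{M}^H$ for some finite-dimensional coquasi-Hopf algebra $H$. Since $H$ is finite-dimensional, its coquasi-antipode is bijective by \cite[Corollary 4.4]{BC03}. Remark \ref{rm:S(H0)<H0} then gives $S(H_0)\subseteq H_0$, so Lemma \ref{lem:gr} and Theorem \ref{thm:embedding} apply. They provide a graded coquasi-Hopf algebra embedding
$$\psi:\operatorname{gr}H\hookrightarrow \operatorname{CoT}_{H_{[0]}}(H_{[1]}/H_{[0]})$$
which is the identity in degrees $0$ and $1$.

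\emph{Step 2 (the embedding of categories).} An injective morphism of coquasi-Hopf algebras induces corestriction of scalars $\mathscr{M}^{\operatorname{gr}H}\to\mathscr{M}^{\operatorname{CoT}}$. This functor is strict monoidal: the comodule structure of a tensor product is defined via multiplication, and the associativity constraint via the reassociator. Both are preserved by $\psi$, because $\psi$ respects $m$, $\Phi$, $\alpha$, $\beta$ and $S$. It is fully faithful and exact, since comodule maps are just linear maps commuting with coactions and these coactions factor through the subcoalgebra $\psi(\operatorname{gr}H)$. Its essential image consists of the comodules whose coefficient coalgebra lies in $\psi(\operatorname{gr}H)$, so it is closed under subquotients. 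Hence it is an embedding of tensor categories.

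\emph{Step 3 (Loewy length $\le 2$).} This is the main point. Write $C=\operatorname{CoT}_{H_{[0]}}(H_{[1]}/H_{[0]})$. A finite-dimensional $C$-comodule $V$ has Loewy length $\le 2$ exactly when $\operatorname{cf}^R(V)\subseteq C_1$, where $C_1$ is the first term of the coradical filtration (the standard description of the socle filtration via $C_0\wedge C_0$). I would then show $C_1\subseteq H_{[0]}\oplus H_{[1]}/H_{[0]}=\psi(\operatorname{gr}H(0)\oplus\operatorname{gr}H(1))$. By Lemma \ref{lem:Majidcoradical}, the coradical of $H_{[0]}$ is $H_0$, so $C_0=H_0\subseteq H_{[0]}$. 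For $c\in C_1$, decompose $c$ into graded components $c=\sum_n c_n$ and apply $\Delta(c)\in C_0\otimes C+C\otimes C_0$. For $n\ge2$ the component of $\Delta(c_n)$ in $M\otimes M^{\Box(n-1)}$ (with $M=H_{[1]}/H_{[0]}$) is the identity embedding of $M^{\Box n}$ into $M\otimes M^{\Box (n-1)}$ and so is injective. But this component must vanish, since $C_0\subseteq C(0)$. Hence $c_n=0$ for $n\ge2$. Because $\psi$ is the identity in degrees $\le1$, every such $V$ lies in the image of the functor from Step 2. The one thing to check carefully is that $\operatorname{gr}H$ and $\operatorname{CoT}$ share the same degree-zero part $H_{[0]}$, which Theorem \ref{thm:embedding} guarantees.

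\emph{Step 4 (braided and ribbon cases).} If $\mathscr{C}$ is braided, respectively ribbon, then \cite[Proposition 10.1]{BCPV19}, respectively \cite[Proposition 13.20(ii)]{BCPV19}, allows us to choose $H$ coquasitriangular, respectively coribbon. Propositions \ref{prop:CQTembedding} and \ref{prop:coribbonembedding} then make $\psi$ a morphism of coquasitriangular, respectively coribbon, coquasi-Hopf algebras. The braiding on comodules is $c_{V,W}(v\otimes w)=R(v_1,w_1)\,w_0\otimes v_0$ (up to the paper's conventions) and the twist is built from $\nu$, so corestriction along a map preserving $R$ and $\nu$ preserves both. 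Hence the functor is braided, respectively ribbon.
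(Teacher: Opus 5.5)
Your route is the paper's: reconstruction by \cite[Proposition 2.6]{EO04}, bijectivity of $S$ by \cite[Corollary 4.4]{BC03} together with Remark \ref{rm:S(H0)<H0}, the embedding of Theorem \ref{thm:embedding}, and Propositions \ref{prop:CQTembedding}, \ref{prop:coribbonembedding} with \cite[Propositions 10.1 and 13.20(ii)]{BCPV19} for the braided and ribbon cases. Your Step 3 correctly proves the Loewy-length assertion, which the paper only states. The condition $\rho(V)\subseteq V\otimes C_1$ is exactly Loewy length $\le 2$. The $M\otimes M^{\Box(n-1)}$-component of $\Delta$ on $M^{\Box n}$ is the inclusion, and $C_0\subseteq C(0)$. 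Together these force $C_1\subseteq C(0)\oplus C(1)=\psi(\operatorname{gr}H(0)\oplus\operatorname{gr}H(1))$.

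What is missing is a check that the target $\mathscr{M}^{C}$, with $C=\operatorname{CoT}_{H_{[0]}}(H_{[1]}/H_{[0]})$, is a tensor category at all. Without it, the conclusion ``embedding of tensor categories'' at the end of Step 2 is not justified. The source is fine, since $\operatorname{gr}H$ is finite-dimensional. But $C$ is infinite-dimensional as soon as $H_{[1]}\neq H_{[0]}$, so \cite[Corollary 4.4]{BC03} does not apply to it. The coquasi-antipode $S'$ only produces left duals, so a priori $\mathscr{M}^{C}$ is just a ring category with left duals. The standard right duals need $(S')^{-1}$. The paper supplies this via Lemma \ref{lem:antipodebijective}: $H_{[0]}$ is finite-dimensional, so its coquasi-antipode is bijective by \cite{BC03}, hence so is $S'$.

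Alternatively, \cite[Theorem A.2]{SY26} would do. Simple $C$-comodules have coefficient coalgebra in $C_0=H_0\subseteq H_{[0]}$. Their right duals in the rigid category $\mathscr{M}^{H_{[0]}}$ remain right duals in $\mathscr{M}^{C}$, because $H_{[0]}\hookrightarrow C$ is a morphism of coquasi-Hopf algebras. With this step added, your argument is complete.
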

\begin{proof}
By \cite[Corollary 4.4]{BC03}, a coquasi-antipode of a finite-dimensional coquasi-Hopf algebra is bijective. It follows from Lemmas \ref{lem:Majidcoradical} and \ref{lem:gr} that $H_{[0]}$ is a coquasi-Hopf subalgebra of $H$ and $\operatorname{gr} H$ is well-established. Then by Lemma \ref{lem:antipodebijective}, the coquasi-antipode of $\operatorname{CoT}_{H_{[0]}}(H_{[1]}/H_{[0]})$ is bijective and thus $\mathscr{M}^{\operatorname{CoT}_{H_{[0]}}(H_{[1]}/H_{[0]})}$ forms a tensor category. The remainder is an immediate consequence of \cite[Proposition 2.6]{EO04}, \cite[Propositions 10.1 and 13.20(ii)]{BCPV19}, Theorem \ref{thm:embedding}, Propositions \ref{prop:CQTembedding} and \ref{prop:coribbonembedding}.
\end{proof}

\begin{example}\rm
According to \cite[Section 3.4]{Maj95}, the quantum group $u_q(\mathfrak{sl}_2)$ is a ribbon Hopf algebra when $q$ is a primitive third root of unity. By \cite[Proposition A.6]{Cli19},
$$
u_q(\mathfrak{sl}_2)^*\cong \mathcal{O}_q(SL_2)/(a^3-1,d^3-1,b^3,c^3),
$$
so the latter is coribbon. Let $\nu_0$ be the coribbon structure of
$\mathcal{O}_q(SL_2)/(a^3-1,d^3-1,b^3,c^3)$
defined on the generators $a,b,c$ by
$$
\nu_0(a)=1,\;\; \nu_0(b)=0,\;\; \nu_0(c)=0,
$$
and extended by (\ref{eq:coribbon3}).
Then define a linear map $\nu:M(a,b,c,d,g)\to \k$ by
$$
\nu(a^{r_a}b^{r_b}c^{r_c}g^{r_g})
=\nu_0(a^{r_a}b^{r_b}c^{r_c})(\sqrt{-1})^{r_g}.
$$
One can directly verify that $\nu$ defines a coribbon structure on $M(a,b,c,d,g)$ from Example \ref{eq:M(a,b,c,d,g)}.
\end{example}

\section{Quasi-coquasi-Hopf pairings}\label{Section5}
\subsection{Preliminaries on tensor quasi-Hopf algebras}
In this subsection, we recall the basics of quasi-Hopf algebras.

Recall that a \textit{quasi-bialgebra} $(A, m, \mu, \Delta, \varepsilon, \phi)$ (\cite[Definition 3.4]{BCPV19}) is a $\k$-algebra $(A, m, \mu)$ with algebra morphisms $\Delta: A \to A \otimes A$ and $\varepsilon: A \to \k$, and an invertible element $\phi \in A \otimes A \otimes A$ (also called associator), such that for $a\in A,$ the following relations hold
\begin{equation*}
(\id \otimes \Delta)\Delta(a) = \phi\big((\Delta \otimes \id)\Delta(a)\big)\phi^{-1},
\end{equation*}
\begin{equation*}
(\id \otimes \id \otimes \Delta)(\phi)(\Delta \otimes \id \otimes \id)(\phi) = (1 \otimes \phi)(\id \otimes \Delta \otimes \id)(\phi)(\phi \otimes 1),
\end{equation*}
\begin{equation*}
(\varepsilon \otimes \id)\Delta = \id = (\id \otimes \varepsilon)\Delta,
\end{equation*}
\begin{equation*}
(\id \otimes \varepsilon \otimes \id)(\phi) = 1 \otimes 1.
\end{equation*}
We denote $\phi =  X^1 \otimes X^2 \otimes X^3$ and $\phi^{-1} =  \overline{X}^1 \otimes \overline{X}^2 \otimes \overline{X}^3$. A quasi-bialgebra $A$ is called a \textit{quasi-Hopf algebra} (\cite[Definition 3.13]{BCPV19}) if there is a linear algebra antimorphism $S: A \to A$ and elements $\alpha, \beta \in A$ satisfying for all $a \in A$,
\begin{equation*}
 S(a_{1})\alpha a_{2} =\varepsilon(a) \alpha , \;\;  a_{1}\beta S(a_{2}) = \varepsilon(a) \beta ,
\end{equation*}
\begin{equation*}
 X^1 \beta S(X^2) \alpha X^3 = 1 =  S(\overline{X}^1) \alpha \overline{X}^2 \beta S(\overline{X}^3).
\end{equation*}

Notice that the axioms for a quasi-Hopf algebra immediately imply that $(\varepsilon\otimes \id \otimes \id)(\phi)=( \id \otimes \id \otimes\varepsilon)(\phi)=1\otimes 1,$ $\varepsilon \circ S=\varepsilon,$ and $\varepsilon(\alpha)\varepsilon(\beta)=1.$ For details, see (\cite[Chapter 3]{BCPV19}).
Moreover, we know that $S$ is a coalgebra antimorphism up to twist. To be more specific, let
\begin{equation}
F= (S\otimes S) (\Delta^{cop}(\overline{X}^1))\Gamma \Delta(\overline{X}^2\beta S(\overline{X}^3)), \label{eq:F}
\end{equation}
where \begin{equation}
\Gamma=S(X^2 \overline{X}^1_2) \alpha X^3 \overline{X}^2\otimes S(X^1 \overline{X}^1_1)\alpha \overline{X}^3. \label{eq:Gamma}
\end{equation}
Then by \cite[Theorem 3.21]{BCPV19}, we have $$F\Delta(S(a))F^{-1}=(S\otimes S)(\Delta^{cop}(a)). $$

Recall from \cite[Section 2]{AC92} that a \textit{quasitriangular} quasi-Hopf algebra is a quasi-Hopf algebra
$(A,m,\mu,\Delta,\varepsilon,\phi,S,\alpha,\beta)$ equipped with an invertible element $R\in A\otimes A$ satisfying
\begin{equation*}
\Delta^{cop}(a)R=R\Delta(a), \label{eq:QT1}
\end{equation*}
\begin{equation*}
(\Delta\otimes\operatorname{id})(R)=\phi_{[231]}R^{13}\phi_{[132]}^{-1}R^{23}\phi,\label{eq:QT2}
\end{equation*}
\begin{equation*}
(\operatorname{id}\otimes\Delta)(R)=\phi_{[312]}^{-1}R^{13}\phi_{[213]}R^{12}\phi^{-1}\label{eq:QT3}
\end{equation*}
for every $a\in A$. Here, for $T= T^1\otimes T^2\otimes T^3$, we write
$T_{[ijk]}= T^i\otimes T^j\otimes T^k$. For $R= r^1\otimes r^2$, set $R^{12}= r^1\otimes r^2 \otimes 1, R^{13}= r^1\otimes 1\otimes r^2, R^{23}=1\otimes  r^1\otimes r^2.$
A \textit{ribbon} quasi-Hopf algebra (\cite[Section 4]{AC92}) is a quasitriangular
quasi-Hopf algebra $(A,R)$ equipped with an invertible central
element $v\in A$ satisfying
\begin{equation*}
\Delta(v)=(v\otimes v)(R^{21}R)^{-1}, \;\;
S(v)=v,\label{eq:ribbon2}
\end{equation*}
where $R^{21}= r^2\otimes r^1.$ It follows that $\varepsilon(v)=1$ (see \cite[Remark 13.27]{BCPV19}).

It is well known that the dual $A^*$ of a finite-dimensional quasitriangular (respectively, ribbon) quasi-Hopf algebra $A$ is a coquasitriangular (respectively, coribbon) coquasi-Hopf algebra.

\begin{example}\rm
\begin{itemize}
\item[(1)] According to \cite[Theorem 10.20]{BCPV19}, the Drinfeld double of a finite-dimensional quasi-Hopf algebra is a quasitriangular quasi-Hopf algebra. Dually, we obtain a coquasitriangular coquasi-Hopf algebra. Let $G$ be a finite group and $\omega$ a $3$-cocycle on $G$ such that the characteristic of $\k$ does not divide the order of $G$. Dijkgraaf, Pasquier and Roche \cite{DPR90} constructed a quasi-Hopf algebra $D^\omega(G)$, known as the \textit{twisted Drinfeld double} of $G$. Note that the coquasi-Hopf algebra $(D^\omega(G))^*$ is both coribbon and cosemisimple \cite[Section 5]{AC92}.
\item[(2)]For a simple finite-dimensional Lie algebra $\mathfrak{g}$ over $\mathbb{C}$, Drinfeld \cite[Proposition 3.16]{Dri90} showed that a quasitriangular quasi-Hopf quantized enveloping algebra $U\mathfrak{g}[[h]]$ is indeed twist equivalent to the usual quantum group $U_h\mathfrak{g}$. Liu, Van Oystaeyen, and Zhang \cite{Liu14, LVZ17} constructed the quasi-Frobenius-Lusztig kernels $Qu_q(\mathfrak{g})$ associated with any simple finite-dimensional Lie algebra $\mathfrak{g}$. Inspired by the classical Frobenius-Lusztig kernel theory \cite{Lus90}, they expected that the quasi-Frobenius-Lusztig kernel associated with $\mathfrak{g}$ should be the Drinfeld double of the half small quasi-quantum group defined in \cite{EG05}. They also established that, in many cases, $Qu_q(\mathfrak{g})$ is not twist-equivalent to any Hopf algebra. By taking duals, we obtain examples of genuine non-cosemisimple coquasitriangular coquasi-Hopf algebras.
\item[(3)]In \cite[Subsection 3.1]{FGR22}, Farsad, Gainutdinov and Runkel constructed a class of finite-dimensional quasitriangular quasi-Hopf algebras $Q(N)$. It is worth mentioning that the quasi-Hopf algebras $Q(N)$ are ribbon. They also showed that ${}_{Q(N)}\mathscr{M}$ is ribbon equivalent to the symplectic fermion category $\mathcal{SF}(N)$ \cite[Theorem 3.6]{FGR22}, which was computed in \cite{Run14} from the conformal blocks of the corresponding logarithmic conformal field theory. According to Proposition \ref{prop:coribbonembedding}, $\mathscr{M}^{\operatorname{gr}(Q(N))^*}$ can be embedded, as a ribbon tensor category, into the comodule category of a cotensor coquasi-Hopf algebra.
\end{itemize}
\end{example}

Let $H$ be a Hopf algebra and $M$ a Hopf bimodule over $H$. When both $H$ and $M$ are finite-dimensional, $H^*$ inherits a Hopf algebra structure, and $M^*$ becomes a Hopf bimodule over $H^*$. Moreover, the graded dual of $\operatorname{CoT}_H(M)$ is isomorphic to the tensor Hopf algebra
$T_{H^*}(M^*).$
For details, see \cite{Nic78}.
Bulacu \cite{Bul20} extended Nichols' tensor Hopf algebra construction to the quasi-Hopf algebra setting. In Proposition \ref{prop:CotCQH}, we parallel this by extending Nichols' construction of Hopf algebras over $\operatorname{CoT}_H(M)$ to the coquasi-Hopf algebra case.
We next recall the tensor quasi-Hopf algebra structures constructed by Bulacu \cite{Bul20}. Before doing so, we first recall the definition of a two-sided two-cosided quasi-Hopf bimodule, which is dual to that of a Majid bimodule.

Let $A$ be a quasi-bialgebra, and let $M$ be an $A$-bimodule equipped with two $A$-bimodule maps $\rho^L_M : M \rightarrow A \otimes M$ and $\rho^R_M : M \rightarrow M \otimes A$. We call $(M, \rho^L_M, \rho^R_M)$ a \textit{two-sided two-cosided quasi-Hopf $A$-bimodule} (\cite[Definition 3.1]{Sch02}) if $M$ is an $A$-bicomodule in the monoidal category ${}_A\mathcal{M}_A$, that is, if the following conditions hold for all $m \in M$:
\begin{equation*}
(\varepsilon \otimes \id_M) \circ \rho^L_M = \id_M,\;\; (\id_M \otimes \varepsilon) \circ \rho^R_M = \id_M,
\end{equation*}
\begin{equation*}
(\id_A \otimes \rho^L_M)\rho^L_M(m) \cdot \phi = \phi \cdot (\Delta \otimes \id_M)\rho^L_M(m),
\end{equation*}
\begin{equation*}
\phi \cdot (\rho^R_M \otimes \id_A)\rho^R_M(m) = (\id_M \otimes \Delta)\rho^R_M(m) \cdot \phi,
\end{equation*}
\begin{equation*}
\phi \cdot (\rho^L_M \otimes\id_A)\rho^R_M(m) = (\id_A \otimes \rho^R_M)\rho^L_M(m)\cdot \phi.
\end{equation*}

We now take $A$ to be a quasi-Hopf algebra and $M$ to be a two-sided two-cosided quasi-Hopf $A$-bimodule. Denote $T_A(M)$ be the tensor algebra associated to the $A$-bimodule $M$, i.e.,
$$T_A(M) = A \oplus \bigoplus_{n\geq 1} M^{\otimes_A n},$$ where $M^{\otimes_A 1} := M$ and $M^{\otimes_A n} := M^{\otimes_A n-1} \otimes_A M$, for all $n \geq 2$. The multiplication $\circ$ on $T_A(M)$ is given by
\begin{eqnarray*}
    &&a\circ a^\prime =aa^\prime,\\
    &&a \circ(m^1 \otimes_A m^2 \otimes_A \cdots \otimes_A m^l)=(a\cdot m^1)\otimes_A m^2 \otimes_A \cdots \otimes_A m^l,\\
    &&(m^1 \otimes_A m^2 \otimes_A \cdots \otimes_A m^l) \circ a=m^1 \otimes_A m^2 \otimes_A \cdots \otimes_A (m^l\cdot a),\\
   && (m^1 \otimes_A m^2 \otimes_A \cdots \otimes_A m^l)\circ(m^{l+1} \otimes_A m^{l+2} \otimes_A \cdots \otimes_A m^{n})=m^1 \otimes_A m^2 \otimes_A \cdots \otimes_A m^n,
\end{eqnarray*}
for all $a, a^\prime \in A$, $l \geq 1$, $n \geq 2$ and $m^1, \dots, m^n \in M$. The unit of $T_A(M)$ is $1_A$. To avoid confusion, we will write $T_A(M)\underline{\otimes} T_A(M)$ for $T_A(M)\otimes T_A(M)$. The comultiplication $\tilde{\Delta}$ on $T_A(M)$ is defined on generators by $\tilde{\Delta}(a) = \Delta(a)$ for $a \in A$, and
$$
\tilde{\Delta}(m) = \rho^L_M(m) + \rho^R_M(m) = m_{-1}\otimes m_{0} + m_{0}\otimes m_{1} \in T_A(M)\underline{\otimes} T_A(M),
$$
for $m \in M$. By the universal property of $T_A(M)$ (see \cite[Proposition 1.4.1]{Nic78}), it extends to an algebra morphism $\tilde{\Delta}: T_A(M) \to T_A(M)\underline{\otimes} T_A(M)$. The counit $\tilde{\varepsilon}$ is defined on the generators by $\tilde{\varepsilon}(a)=\varepsilon(a)$ for $a\in A$ and $\tilde{\varepsilon}(m)=0$ for $m\in M$, and extends uniquely to an algebra morphism on $T_A(M)$. For the associator, we have $\tilde{\phi} = X^1\underline{\otimes} X^2\underline{\otimes} X^3$, where $\phi = X^1 \otimes X^2 \otimes X^3$ is the associator of $A$. Given an antipode $(S, \alpha, \beta)$ of $A$, one obtains an antipode $(\tilde{S}, \tilde{\alpha}, \tilde{\beta})$ for $T_A(M)$ by putting $\tilde{\alpha} = \alpha$, $\tilde{\beta} = \beta$, $\tilde{S}(a) = S(a)$ for all $a \in A$, and
$$
\tilde{S}(m) = - S(m_{0,-1} \overline{X}^1) \alpha \cdot m_{0,0} \cdot \overline{X}^2 \beta S(\overline{X}^3) S(m_{1}),
$$
for all $m\in M$, where $
(\rho^L_M \otimes\id_A)\rho^R_M(m)=m_{0,-1}\otimes m_{0,0}\otimes m_1
$. This assignment extends to an algebra antimorphism from $T_A(M)$ to itself.

\begin{lemma}\emph{(}\cite[Proposition 5.4]{Bul20}\emph{)}\label{lem:tensorquasi}
With the above notation, $(T_A(M), \circ, 1_A, \tilde{\Delta}, \tilde{\varepsilon}, \tilde{\phi}, \tilde{S}, \tilde{\alpha}, \tilde{\beta})$ is a graded quasi-Hopf algebra. Such a quasi-Hopf algebra is called a tensor quasi-Hopf algebra associated with $A$ and $M$.
\end{lemma}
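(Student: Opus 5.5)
This is Bulacu's result, so the plan below just records how one would check it directly; the arguments are dual to those for Lemma \ref{lem:cotensor=coquasibialgebra} and Proposition \ref{prop:CotCQH}. The work splits into three parts: the quasi-bialgebra structure on the tensor algebra, the grading, and the quasi-antipode.

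\emph{Quasi-bialgebra structure.} First we check that $\tilde{\Delta}$ and $\tilde{\varepsilon}$ are well defined. By the universal property of $T_A(M)$ \cite[Proposition 1.4.1]{Nic78}, an algebra map out of $T_A(M)$ is the same as an algebra map $f_0$ on $A$ together with an $A$-bimodule map $f_1$ on $M$, where the target is an $A$-bimodule via $f_0$. Since $\rho^L_M$ and $\rho^R_M$ are $A$-bimodule maps, $\tilde\Delta|_M=\rho^L_M+\rho^R_M$ is an $A$-bimodule map into $T_A(M)\underline{\otimes}T_A(M)$. It therefore extends, and likewise $\tilde\varepsilon$ extends.

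Next we verify quasi-coassociativity, $(\id\otimes\tilde\Delta)\tilde\Delta(x)=\tilde\phi\,(\tilde\Delta\otimes\id)\tilde\Delta(x)\,\tilde\phi^{-1}$. Both sides are algebra maps, because conjugation by the invertible element $\tilde\phi$ is an algebra automorphism. So it suffices to check the identity on the generators $A$ and $M$. On $A$ it is the axiom of $A$. On $M$, expanding $\tilde\Delta(m)$ gives three components, of types $A\otimes A\otimes M$, $A\otimes M\otimes A$ and $M\otimes A\otimes A$. These are exactly the three compatibility conditions in the definition of a two-sided two-cosided quasi-Hopf bimodule.

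The pentagon identity and the counit condition for $\tilde\phi$ involve only $\phi\in A^{\otimes 3}$, so they hold because $A$ is a quasi-bialgebra. The counit laws again reduce to generators.

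\emph{Grading.} The grading is clear: multiplication adds tensor degrees, and $\tilde\Delta$ maps degree $n$ into $\bigoplus_{i+j=n}$ degree $i$ $\otimes$ degree $j$, since it does so on generators.

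\emph{Quasi-antipode.} We first show that the formula for $\tilde S(m)$ is well defined, in the sense that $\tilde S$ extends to an algebra antimorphism. By the universal property applied to $T_A(M)^{op}$, it suffices that $\tilde S|_M$ satisfies
\[
\tilde S(a\cdot m\cdot b)=S(b)\circ\tilde S(m)\circ S(a).
\]
This follows from the bimodule property of $\rho^L_M$ and $\rho^R_M$ together with $S(a_1)\alpha a_2=\varepsilon(a)\alpha$ and $a_1\beta S(a_2)=\varepsilon(a)\beta$.

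The axioms $X^1\beta S(X^2)\alpha X^3=1$ and $S(\overline X^1)\alpha\overline X^2\beta S(\overline X^3)=1$ hold because they live in $A$.

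The remaining axioms are $\tilde S(x_1)\alpha x_2=\tilde\varepsilon(x)\alpha$ and $x_1\beta\tilde S(x_2)=\tilde\varepsilon(x)\beta$. Their solution set is closed under products: if $x$ and $y$ satisfy them, then so does $xy$, because $\tilde\Delta$ is multiplicative and $\tilde S$ is anti-multiplicative. Hence it suffices to check them on $A$, which is immediate, and on $M$.

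For $m\in M$ we have $\tilde\varepsilon(m)=0$, so the required identity is
\[
S(m_{-1})\alpha\cdot m_0+\tilde S(m_0)\alpha\cdot m_1=0 .
\]
Substituting the defining formula for $\tilde S$, the second term cancels the first. This uses the coassociativity-type identity $\phi\cdot(\rho^L\otimes\id)\rho^R=(\id\otimes\rho^R)\rho^L\cdot\phi$ and the antipode axiom for $\phi^{-1}$. The same holds for the $\beta$-identity, now using also the twist $F$ from \eqref{eq:F}.

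\emph{Main obstacle.} The hardest step is the $\beta$-identity on $M$. The formula for $\tilde S$ is written with $\alpha$ on the left, so cancelling against $m_{-1}\beta S(m_0)$ requires moving $\beta$ across the $\phi^{-1}$-components. I would try the standard identities from \cite[Chapter 3]{BCPV19}, such as $\overline X^1\beta S(\overline X^2)\alpha\overline X^3=1$ and $\phi$-commutation relations, before falling back on Bulacu's explicit computation \cite[Proposition 5.4]{Bul20}.
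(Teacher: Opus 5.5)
The paper gives no argument for this lemma; it only quotes \cite[Proposition 5.4]{Bul20}. Your plan is therefore a self-contained alternative, and its architecture is sound. The universal property of $T_A(M)$ handles $\tilde\Delta$, $\tilde\varepsilon$ and $\tilde S$. Quasi-coassociativity on $M$ splits into exactly the three axioms of a two-sided two-cosided quasi-Hopf bimodule. The antipode axioms reduce to the generators because their solution set is closed under products. There is one small omission in checking $\tilde S(a\cdot m\cdot b)=S(b)\tilde S(m)S(a)$: you also need $(\Delta\otimes\id)\Delta(b)\,\phi^{-1}=\phi^{-1}(\id\otimes\Delta)\Delta(b)$, to move the legs of $b$ past $\overline X^1\otimes\overline X^2\otimes\overline X^3$ before applying $b_{(1)}\beta S(b_{(2)})=\varepsilon(b)\beta$.

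\textbf{The gap.} The one step with real content, the two antipode identities on $M$, is not carried out, and the ingredients you name would not suffice. Write $\rho^R(m)=m_{\langle0\rangle}\otimes m_{\langle1\rangle}$.

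The term $\tilde S(m_{\langle0\rangle})\alpha m_{\langle1\rangle}$ involves $(\rho^L\otimes\id\otimes\id)(\rho^R\otimes\id)\rho^R(m)$. So the $\alpha$-identity needs the right axiom $\phi\cdot(\rho^R\otimes\id)\rho^R=(\id\otimes\Delta)\rho^R\cdot\phi$ together with the pentagon; the mixed axiom plays no role there. It is the $\beta$-identity that uses the mixed axiom, together with left coassociativity. Neither identity needs $F$ or $S(\overline X^1)\alpha\overline X^2\beta S(\overline X^3)=1$.

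\emph{The $\alpha$-identity.} Take $\phi=Y^1\otimes Y^2\otimes Y^3$ and substitute $(\rho^R\otimes\id)\rho^R(m)=\phi^{-1}\cdot(\id\otimes\Delta)\rho^R(m)\cdot\phi$. Then use $S(x_{(1)})\alpha x_{(2)}=\varepsilon(x)\alpha$, first on the leg of $\phi^{-1}$ and then on $m_{\langle1\rangle}$. This gives
\[
\tilde S(m_{\langle0\rangle})\alpha m_{\langle1\rangle}=-S(m_{-1}T^1)\alpha\cdot m_0\cdot\bigl(T^2\beta S(T^3)\alpha T^4\bigr),\qquad T=(\Delta\otimes\id\otimes\id)(\phi)(\phi^{-1}\otimes 1).
\]
By the pentagon, $T=(\id\otimes\id\otimes\Delta)(\phi^{-1})(1\otimes\phi)(\id\otimes\Delta\otimes\id)(\phi)$. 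Now the relations $x_{(1)}\beta S(x_{(2)})=\varepsilon(x)\beta$ and $S(x_{(1)})\alpha x_{(2)}=\varepsilon(x)\alpha$, plus the normalizations of $\phi^{\pm1}$, collapse the right side to $-S(m_{-1})\alpha\cdot m_0\cdot(X^1\beta S(X^2)\alpha X^3)=-S(m_{-1})\alpha\cdot m_0$.

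\emph{The $\beta$-identity.} Put $a\otimes n\otimes b:=(\rho^L\otimes\id)\rho^R(m)$ and $\phi=W^1\otimes W^2\otimes W^3$. Apply the mixed axiom to rewrite $(\id\otimes\rho^R)\rho^L(m)$, remove the middle leg of that $\phi$ with $S(x_{(1)})\alpha x_{(2)}=\varepsilon(x)\alpha$, and use $(\id\otimes\rho^L)\rho^L=\phi\cdot(\Delta\otimes\id)\rho^L(-)\cdot\phi^{-1}$. This gives
\[
m_{-1}\beta\tilde S(m_0)=-\bigl(W^1a_{(1)}Q^1\beta S(Q^2)S(a_{(2)})S(W^2)\alpha W^3\bigr)\cdot n\cdot\bigl(Q^3\beta S(Q^4)S(b)\bigr),
\]
where, by the pentagon,
\[
Q=(\phi^{-1}\otimes1)(\id\otimes\Delta\otimes\id)(\phi^{-1})(1\otimes\phi^{-1})=(\Delta\otimes\id\otimes\id)(\phi^{-1})(\id\otimes\id\otimes\Delta)(\phi^{-1}).
\]
From this form one gets $Q^1\beta S(Q^2)\otimes Q^3\beta S(Q^4)=\beta\otimes\beta$. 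Then $a_{(1)}\beta S(a_{(2)})=\varepsilon(a)\beta$ leaves $-\varepsilon(a)(W^1\beta S(W^2)\alpha W^3)\cdot n\cdot\beta S(b)=-m_{\langle0\rangle}\cdot\beta S(m_{\langle1\rangle})$.

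So the $\beta$-case is no harder than the $\alpha$-case, contrary to your ``main obstacle'' worry. Both use only the bimodule axioms, the pentagon, the two $\alpha/\beta$ relations, and $X^1\beta S(X^2)\alpha X^3=1$.
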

In the next subsection, we will present the connection between the above tensor quasi-Hopf algebra and the cotensor coquasi-Hopf algebras established in Subsection \ref{subsection3.1}.

\subsection{Pairings between tensor quasi-Hopf and cotensor coquasi-Hopf algebras}
In \cite[p.110]{Kas95}, the notion of a Hopf pairing is introduced; a classical example is the Hopf pairing between the quantum group $U_q(\mathfrak{sl}_2)$ and $\mathcal{O}_q(SL_2)$ (see \cite[Theorem VII.4.4]{Kas95}). In \cite[Proposition 3.4]{Che07}, Chen established a graded Hopf pairing between certain tensor Hopf algebras and cotensor Hopf algebras. The aim of this subsection is to extend this result to tensor quasi-Hopf algebras and cotensor coquasi-Hopf algebras.

Before proceeding further, we introduce the notion of a quasi-coquasi-Hopf pairing, generalizing the notion of a Hopf pairing.
\begin{definition}\label{def:QCQpair}
Let $A=(A,m_A,\mu_A,\Delta_A,\varepsilon_A,\phi)$ and $H=(H,m_H,\mu_H,\Delta_H,\varepsilon_H,\Phi)$ be a quasi-bialgebra and a coquasi-bialgebra over $\k$, respectively. A quasi-coquasi-bialgebra pairing
is a bilinear map $\theta:A\times H\rightarrow \k$
such that
\begin{equation}
\theta( ab,h)=\theta( a,h_1)\theta( b,h_2), \label{eq:pair1}
\end{equation}
\begin{equation}
\theta( a,hg)=\theta( a_1,h)\theta( a_2,g),\label{eq:pair2}
\end{equation}
\begin{equation}
\theta( 1_A,h)=\varepsilon_H(h),\label{eq:pair3}
\end{equation}
\begin{equation}
\theta( a,1_H)=\varepsilon_A(a),\label{eq:pair4}
\end{equation}
\begin{equation}
\theta(\phi,h\otimes g\otimes l)=\Phi(h,g,l)\label{eq:pair5}
\end{equation}
for all $a,b\in A$ and $g,h,l\in H$. If $A$ and $H$ are respectively a quasi-Hopf algebra and a coquasi-Hopf algebra with chosen quasi-antipode data
$(S_A,\alpha_A,\beta_A)$ and $(S_H,\alpha_H,\beta_H)$,
the pairing is called a quasi-coquasi-Hopf pairing if, in addition,
\begin{equation}
\theta( S_A(a),h)=\theta( a,S_H(h)),\label{eq:pair6}
\end{equation}
\begin{equation}
\theta(\alpha_A,h)=\alpha_H(h),\label{eq:pair7}
\end{equation}
\begin{equation}
\theta(\beta_A,h)=\beta_H(h).\label{eq:pair8}
\end{equation}
The pairing is called right nondegenerate (respectively, left nondegenerate) if the condition $\theta(a,h)=0$ for all $a\in A$ forces $h=0$ (respectively, if $\theta(a,h)=0$ for all $h\in H$ forces $a=0$). It is called nondegenerate if it is both left and right nondegenerate. Assume further that $A = \bigoplus_{n\geq 0} A(n)$ and $H = \bigoplus_{n\geq 0} H(n)$ are graded. A quasi-coquasi-Hopf pairing $\theta : A \times H \rightarrow \k$ is said to be graded if $\theta( A(i), H(j))=0$ whenever $i\neq j$.
\end{definition}
\begin{example}\rm
\begin{itemize}
\item[(1)]Let $A$ be a finite-dimensional quasi-Hopf algebra, and let $H$ be a finite-dimensional coquasi-Hopf algebra. Let $$\langle -,-\rangle:H^*\times H\to \k$$ be the bilinear map defined by $\langle f,x\rangle=f(x)$. Then this bilinear map is a nondegenerate quasi-coquasi-Hopf pairing. Moreover, following the proof of \cite[Proposition V.7.2]{Kas95}, one can show that a nondegenerate quasi-coquasi-Hopf pairing
$$\langle -,-\rangle:A\times H\to \k$$
exists if and only if $H$ is isomorphic to $A^*$ as a coquasi-Hopf algebra.
\item[(2)]Let $A$ be a (infinite-dimensional) quasi-bialgebra. According to \cite[Subsection 5.2]{AES16}, the bilinear form
$$
\langle -,-\rangle:A\times A^\circ\to \k,
\qquad \langle a,f\rangle=f(a),
$$
is a right nondegenerate quasi-coquasi-bialgebra pairing. Here $A^\circ$ denotes the finite dual of $A$. However, from the coquasi-Hopf algebra side, the result is not completely parallel. Ardizzoni, El Kaoutit and Saracco \cite[Proposition 5.5]{AES16} showed that for a split coquasi-bialgebra $H$ (see \cite[Subsection 5.2]{AES16}), one can choose a good space $H^\bullet$ of $H^*$ (see \cite[Subsection 2.1]{AES16}) such that $H^\bullet$ admits a quasi-bialgebra structure and the pairing
$$
\langle-,-\rangle:H^\bullet\times H\rightarrow \k
$$
is a left nondegenerate quasi-coquasi-bialgebra pairing. They considered split coquasi-bialgebras in order to construct the associator on $H^\bullet$.
\end{itemize}
\end{example}

\begin{remark}\rm\label{rm:Ffpair}
Let $A$ be a quasi-Hopf algebra, $H$ a coquasi-Hopf algebra, and $\theta : A\times H \to \k$ a quasi-coquasi-Hopf pairing.
Let $F\in A\otimes A$ be the element defined in (\ref{eq:F}), and let $f:H\times H\to \k$ be the bilinear map defined as in (\ref{eq:f}). A direct computation shows that for any $h,g\in H$, we have
$$
\theta(F,h\otimes g)=f(h,g).
$$
\end{remark}
\begin{remark}\rm
Let $A$ be a quasi-Hopf algebra, $H$ a coquasi-Hopf algebra, and $\theta : A\times H \to \k$ a nondegenerate quasi-coquasi-Hopf pairing. If $R\in A\otimes A$ is a quasitriangular structure on $A$,
then we can show that
$$
r(x,y)=\theta( R,x\otimes y)
$$
is a coquasitriangular structure on $H$. If, in addition, $(A,R,v)$ is a ribbon quasi-Hopf algebra and we put
$$
\nu(x)=\theta( v,x),
$$
then $(H,r,\nu)$ is a coribbon coquasi-Hopf algebra. Conversely, let $r$ be a coquasitriangular structure on $H$.
If there exists an invertible element $R\in A\otimes A$
such that
$$
r(x,y)=\theta( R,x\otimes y),
$$
a direct computation shows that $R$ is a quasitriangular structure on $A$. Let $\nu$ be a coribbon functional on $(H,r)$.
If there exists an invertible element $v\in A$ such that
\begin{eqnarray*}
\nu(x)=\theta( v,x),
\end{eqnarray*}
then $v$ is a ribbon element of $(A,R)$.
\end{remark}

In what follows, we seek to establish graded quasi-coquasi-Hopf pairings between tensor quasi-Hopf algebras and cotensor coquasi-Hopf algebras.
Inspired by \cite[Subsection 3.2]{Che07}, we first introduce the following definition.
\begin{definition}
Let $A$ be a quasi-Hopf algebra and $H$ a coquasi-Hopf algebra. Let $M$ be a two-sided two-cosided quasi-Hopf $A$-bimodule and $N$ an $H$-Majid bimodule. A pairing between the pairs $(A,M)$ and $(H,N)$, denoted by
$$
(\theta_0,\theta_1):(A,M)\times(H,N)\rightarrow \k,
$$
is given by a quasi-coquasi-Hopf pairing $\theta_0:A\times H\rightarrow \k$ and a bilinear map $\theta_1:M\times N\rightarrow \k$ such that
\begin{equation}
\theta_1(a\cdot m\cdot b,n)=\theta_0(a,n_{-1})\theta_1(m,n_0)\theta_0(b,n_1), \label{eq:pairofmajid1}
\end{equation}
\begin{equation}
\theta_1(m,(h\cdot n)\cdot g)=\theta_0(m_{0,-1},h)\theta_1(m_{0,0},n)\theta_0(m_1,g), \label{eq:pairofmajid2}
\end{equation}
where $a,b\in A$, $g,h\in H$, $m\in M$, $n\in N$, and
$
(\rho^L_M \otimes\id_A)\rho^R_M(m)=m_{0,-1}\otimes m_{0,0}\otimes m_1.
$
\end{definition}
We note that (\ref{eq:pairofmajid2}) is quite different from its Hopf algebra counterpart, because $M$ is not a comodule in general, nor is $N$ a module. Moreover,
(\ref{eq:pairofmajid2}) is equivalent to
$$
\theta_1(m,h\cdot (n\cdot g))=\theta_0(m_{-1},h)\theta_1(m_{0,0},n)\theta_0(m_{0,1},g),
$$
where $(\id_A \otimes \rho^R_M)\rho^L_M(m)=m_{-1}\otimes m_{0,0}\otimes m_{0,1}.$

We are now ready to establish the main result of this subsection.
\begin{theorem}\label{thm:pair}
Let $A$ be a quasi-Hopf algebra and $H$ a coquasi-Hopf algebra. Let $M$ be a two-sided two-cosided quasi-Hopf $A$-bimodule and $N$ an $H$-Majid bimodule. Then a graded quasi-coquasi-Hopf pairing
$$
\theta:T_A(M)\times \operatorname{CoT}_H(N)\rightarrow \k
$$
exists if and only if there exists a pairing $(\theta_0,\theta_1)$ between $(A,M)$ and $(H,N)$. Moreover, $\theta$ is right nondegenerate if and only if $\theta_0$ and $\theta_1$ are right nondegenerate.
\end{theorem}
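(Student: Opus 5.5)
The ``only if'' direction is a restriction argument. Given a graded quasi-coquasi-Hopf pairing $\theta$, we set $\theta_0:=\theta|_{A\times H}$ and $\theta_1:=\theta|_{M\times N}$. Since the structure maps of $T_A(M)$ and $\operatorname{CoT}_H(N)$ restrict in degree $0$ to those of $A$ and $H$, and since the reassociator $\Phi'$ and the maps $\alpha',\beta'$ are concentrated in degree $0$, conditions (\ref{eq:pair1})--(\ref{eq:pair8}) restrict to give that $\theta_0$ is a quasi-coquasi-Hopf pairing. For (\ref{eq:pairofmajid1}), we apply (\ref{eq:pair1}) to $a\circ m\circ b$ paired with $n$. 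We expand $\Delta^2(n)$ in $\operatorname{CoT}_H(N)$ and use gradedness: the only surviving term is $n_{-1}\otimes n_0\otimes n_1$. For (\ref{eq:pairofmajid2}), we apply (\ref{eq:pair2}) twice to $m$ paired with $(h\cdot n)\cdot g$. We expand $\tilde\Delta$ of $m$ iteratively and again keep only the degree-matching component, which is $m_{0,-1}\otimes m_{0,0}\otimes m_1$.

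For the ``if'' direction we build $\theta$ degreewise. We put $\theta|_{A\times H}=\theta_0$ and $\theta|_{M\times N}=\theta_1$. For $n\ge2$ we define
\[
\theta(m^1\otimes_A\cdots\otimes_A m^n,\ x):=\bigl(\theta_1\otimes\cdots\otimes\theta_1\bigr)(m^1\otimes\cdots\otimes m^n,\ x)\quad\text{for } x\in N^{\Box n}\subseteq N^{\otimes n},
\]
where the $k$-th tensor factor of $m^1\otimes\cdots\otimes m^n$ is paired with the $k$-th tensor factor of $x$, and we set $\theta=0$ between different degrees. This is well defined on $M^{\otimes_A n}$. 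The reason is that (\ref{eq:pairofmajid1}) turns $m\cdot a\otimes m'-m\otimes a\cdot m'$ into an expression that is killed by the defining relations of the cotensor product, namely $(\rho^R\otimes\id-\id\otimes\rho^L)(x)=0$.

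Next we verify the axioms. (\ref{eq:pair3}), (\ref{eq:pair4}), (\ref{eq:pair5}), (\ref{eq:pair7}) and (\ref{eq:pair8}) are immediate from gradedness and from $\tilde\phi,\tilde\alpha,\tilde\beta$ lying in degree $0$. (\ref{eq:pair1}) holds because $\tilde\Delta$ is defined as an algebra map on generators, while the coproduct of $\operatorname{CoT}_H(N)$ is deconcatenation together with the coactions at the two ends; it suffices to check it on products of generators, where it reduces to (\ref{eq:pairofmajid1}).

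The main obstacle is (\ref{eq:pair2}), and then (\ref{eq:pair6}), because the multiplication $m'$ and the antipode $S'$ of $\operatorname{CoT}_H(N)$ are only defined implicitly via Lemma \ref{lemma:universal}. Our plan is to avoid computing them directly. We consider the induced maps $T_A(M)\to(\operatorname{CoT}_H(N)\otimes\operatorname{CoT}_H(N))^*$ and use the universal property dually. Both sides of (\ref{eq:pair2}), viewed as functionals of $(h,g)$, are coalgebra-type maps determined by their components of degree $0$ and $1$; this is because $m'_n=(m'_1)^{\otimes n}\Delta^{n-1}$ while $\tilde\Delta$ is multiplicative. Hence it suffices to check (\ref{eq:pair2}) for $a\in A\oplus M$. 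For $a\in A$ it is the $\theta_0$ property, and for $a\in M$ it reduces exactly to (\ref{eq:pairofmajid2}) with $h$ or $g$ equal to $1$; the variant with both module actions follows from the equivalent form noted after the definition.

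For (\ref{eq:pair6}) we argue in the same way. Both $\theta(\tilde S(-),-)$ and $\theta(-,S'(-))$ are anti-multiplicative in the first variable; on the first side this uses $F$ and Remark \ref{rm:Ffpair}, i.e.\ $\theta(F,h\otimes g)=f(h,g)$, which matches the twist appearing in $T$. So we only need to check (\ref{eq:pair6}) on $M$. On $M$ we pair the formula for $\tilde S(m)$ with an element $h\cdot n$, $n\in{}^{coH}N$, and compare with $T(h\cdot n)$ from the proof of Proposition \ref{prop:CotCQH}. This comparison uses (\ref{eq:pairofmajid1}), (\ref{eq:pairofmajid2}), (\ref{eq:pair7}), (\ref{eq:pair8}) and $\theta_0(\phi^{-1},-)=\Phi^{-1}$.

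Finally, for nondegeneracy, right nondegeneracy of $\theta$ restricts to that of $\theta_0$ and $\theta_1$ because $\theta$ is graded. Conversely, suppose $x\in N^{\Box n}$ satisfies $\theta(M^{\otimes_A n},x)=0$. Right nondegeneracy of $\theta_1$, applied factor by factor to $x$ viewed in $N^{\otimes n}$, then gives $x=0$.
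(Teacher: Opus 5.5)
Your proposal follows the paper's proof essentially step for step: restriction and gradedness for ``only if'', the factorwise pairing of \cite[Lemma 3.3]{Che07}, reduction of (\ref{eq:pair2}) and (\ref{eq:pair6}) to $A\oplus M$ via (anti-)multiplicativity in the $T_A(M)$-variable, and a degree-one check on elements $h\cdot n$ with $n\in{}^{co H}N$. One correction: that anti-multiplicativity needs only that $\tilde S$ is an algebra antimorphism and $S'$ a coalgebra antimorphism, not $F$; $F$ and Remark \ref{rm:Ffpair} are needed instead in the degree-one check, where (\ref{eq:pair2}) together with $\tilde\Delta(\tilde S(m))=F^{-1}(\tilde S\otimes\tilde S)(\tilde\Delta^{cop}(m))F$ reduces $\theta(\tilde S(m),h\cdot n)$ to the case $h=1$ and produces the factor $f$ in $T(h\cdot n)$, which the ingredients you list for that step do not obviously account for.
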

\begin{proof}
The ``only if'' part: Suppose that $\theta:T_A(M)\times \operatorname{CoT}_H(N)\rightarrow \k$ is a quasi-coquasi-Hopf pairing. A direct verification shows that $\theta_0:=\theta\mid_{A \times H}$ defines a quasi-coquasi-Hopf pairing between $A$ and $H$. Since $\theta$ is graded, for any $a, b\in A, g,h\in H, m\in M$, and $n\in N$, we have
\begin{eqnarray*}
\theta (a\cdot m\cdot b, n)&=& \theta(a\cdot m, n_{-1})\theta(b, n_0)+\theta(a\cdot m, n_0)\theta(b, n_1) \\
&=&\theta(a\cdot m, n_0)\theta(b, n_1)\\
&=& \theta(a, n_{-1}) \theta(m, n_0)\theta(b, n_1)+\theta(a, n_{0}) \theta(m, n_1)\theta(b, n_2)\\
&=&\theta(a, n_{-1}) \theta(m, n_0)\theta(b, n_1),
\end{eqnarray*}
and
\begin{eqnarray*}
\theta(m,(h\cdot n)\cdot g)&=& \theta(m_{-1}, h\cdot n)\theta(m_{0}, g)+\theta(m_{0}, h\cdot n)\theta(m_{1}, g)\\
&=&\theta(m_{0}, h\cdot n)\theta(m_{1}, g)\\
&=&\theta(m_{0,-1}, h) \theta(m_{0,0}, n)\theta(m_{1}, g)+\theta(m_{0,0}, h) \theta(m_{0,1}, n)\theta(m_{1}, g)\\
&=& \theta(m_{0,-1}, h) \theta(m_{0,0}, n)\theta(m_{1}, g).
\end{eqnarray*}
This means that if we define $\theta_1=\theta\mid_{M\times N}$, then $(\theta_0,\theta_1)$ is a pairing between $(A,M)$ and $(H,N)$. \\
The ``if " part: Suppose that $(\theta_0,\theta_1)$ is a pairing between $(A,M)$ and $(H,N)$. According to \cite[Lemma 3.3]{Che07}, there exists
a unique graded bilinear map $\theta:T_A(M)\times \operatorname{CoT}_H(N)\rightarrow \k$ extending $\theta_0$ and
$\theta_1$, which satisfies (\ref{eq:pair1}) and (\ref{eq:pair3}).
Similar to the proof of \cite[Proposition 3.4]{Che07}, we can show that $\theta$ also satisfies \eqref{eq:pair2} and \eqref{eq:pair4}. Indeed, the fact that $\theta$ is graded implies \eqref{eq:pair4}, while the uniqueness in \cite[Lemma 3.3]{Che07} shows that the following two bilinear maps coincide:
$$
\vartheta_1:T_A(M)\times \bigl(\operatorname{Cot}_H(N)\otimes \operatorname{Cot}_H(N)\bigr)\to \k,\quad (x,y\otimes z)\mapsto \theta(x,yz),
$$
and
$$
\vartheta_2:T_A(M)\times \bigl(\operatorname{Cot}_H(N)\otimes \operatorname{Cot}_H(N)\bigr)\to \k,\quad (x,y\otimes z)\mapsto \theta(x_1,y)\theta(x_2,z).
$$ Equations (\ref{eq:pair5}), (\ref{eq:pair7}), and (\ref{eq:pair8}) hold since, by definition, the elements $\tilde{\phi}$, $\tilde{\alpha}$, $\tilde{\beta}$ of $T_A(M)$ all lie in degree $0$, and $\Phi^\prime$, $\alpha^\prime$, $\beta^\prime$ of $\operatorname{CoT}_H(N)$ are concentrated in degree $0$, while $\theta_0$ is a quasi-coquasi-Hopf pairing. It remains only to prove (\ref{eq:pair6}). Note that by \cite[Proposition 2.16]{Yu26}, it suffices to consider elements of $N$ of the form $h\cdot n$, where $h\in H$ and $n\in {}^{co H}N$. Then for any $m\in M,$ we have
\begin{eqnarray*}
    \theta(\tilde{S}(m), n)&=&\theta(- S(m_{0,-1} \overline{X}^1) \alpha \cdot m_{0,0} \cdot \overline{X}^2 \beta S(\overline{X}^3) S(m_{1}),n)\\
    &=&\theta_0(- S_A(m_{0,-1} \overline{X}^1) \alpha_A, 1)\theta_1(m_{0,0}, n_0)\theta_0(\overline{X}^2 \beta_A S(\overline{X}^3) S_A(m_{1}), n_1)\\
    &=&-\varepsilon_A(m_{0,-1}) \varepsilon_A(\overline{X}^1)\theta_0(\alpha_A,1_H)\theta_1(m_{0,0}, n_0)\theta_0(\overline{X}^2 \beta_A S(\overline{X}^3) S_A(m_{1}), n_1)\\
    &=&-\theta_0(\alpha_A,1_H)\theta_1(m_0, n_0)\theta_0( \beta_A  S_A(m_{1}), n_1)\\
    &=&-\theta_0(\alpha_A,1_H)\theta_1(m_0, n_0)\theta_0( \beta_A  , n_1)\theta_0(S_A(m_{1}), n_2)\\
    &=&\theta (m, -\alpha_H(1_H)\beta_H(n_1)  n_0 S_H(n_2))\\
    &=& \theta(m, S^\prime (n)).
\end{eqnarray*}
Furthermore, let $F=F^1\otimes F^2$ be the element defined in (\ref{eq:F}), and we write $F^{-1}=\overline{F}^1\otimes \overline{F}^2.$
According to Remark \ref{rm:Ffpair}, we have
\begin{eqnarray*}
    \theta(\tilde{S}(m), h\cdot n)&=& \theta (\overline{F}^1 \tilde{S}(m_1) F^1, h) \theta (\overline{F}^2\tilde{S}(m_0) F^2, n)\\
    &=& \theta(\overline{F}^1, h_1)\theta(\tilde{S}(m_1) ,h_2) \theta (F^1, h_3) \theta (\overline{F}^2, 1) \theta (\tilde{S}(m_0), n_0)\theta(F^2,n_1)\\
    &=& \theta(F^{-1}, h_1\otimes 1)  \theta (m, S^\prime (n_0) S^\prime (h_2)) \theta(F, h_3\otimes n_1)\\
    &=&\theta (m, f^{-1}(h_1, 1)S^\prime (n_0) S^\prime (h_2) f(h_3, n_1))\\
    &=&\theta (m, S^\prime(h\cdot n)),
\end{eqnarray*}
where $f$ is the bilinear map defined as in (\ref{eq:f}).
Since $\theta$ is graded, it follows that $\theta\mid _{(A \oplus M)\otimes \operatorname{CoT}_H(N)}$ satisfy (\ref{eq:pair6}). We claim that whenever $\theta(-,c)$ satisfies (\ref{eq:pair6}) at $x$ and $y$ for all $c\in \operatorname{CoT}_H(N)$, it satisfies (\ref{eq:pair6}) at $xy$ as well. Indeed, we have
\begin{eqnarray*}
    \theta(\tilde{S} (xy),c)&=&\theta(\tilde{S} (y),c_1 )\theta(\tilde{S} (x),c_2 )\\
    &=&\theta(y,S^\prime (c_1) )\theta(x,S^\prime (c_2) )\\
    &=& \theta(xy, S^\prime (c)).
\end{eqnarray*}
Since $T_A(M)$ is generated as an algebra by $A$ and $M$, we can prove that $\theta$ fulfills (\ref{eq:pair6}). Thus $\theta$ is a quasi-coquasi-Hopf pairing. A similar argument as in the proof of \cite[Proposition 3.4]{Che07}, $\theta$ is right nondegenerate if and only if $\theta_0$ and $\theta_1$ are right nondegenerate.
\end{proof}
We remark that the graded quasi-coquasi-Hopf pairing $\theta$ defined in the proof of the above theorem, extending $\theta_0$ and $\theta_1$, is unique. Moreover, the restriction of the above pairing to $S_H(N)$ yields a corresponding result.
\begin{remark}\rm
With the same notions as in Theorem \ref{thm:pair},
let $Q$ be a graded quasi-Hopf algebra, and suppose that $q:T_A(M)\to Q$ is a graded quasi-Hopf algebra epimorphism with
$\operatorname{ker}(q)\subseteq \bigoplus_{n\geq 2} M^{\otimes_A n}.$ We claim that the bilinear form
$$
\varphi ( q(x), c):=\theta(x, c)
$$
forms a quasi-coquasi-Hopf pairing between $Q$ and the quasi-quantum symmetric algebra $S_H(N)$. Indeed, since $\theta$ is graded, it follows that $$\theta (\operatorname{ker}(q), H\oplus N)=0.$$
Define the set $$E=\{c\in \operatorname{CoT}_H(N)\mid \theta(x,c)=0 \text{ for all } x\in \operatorname{ker}(q)\}.$$
By (\ref{eq:pair2}) and the fact that $\tilde{\Delta}(\ker (q))\subseteq \ker (q)\underline{\otimes} T_A(M)+T_A(M)\underline{\otimes} \ker (q)$, $E$ is closed under multiplication; hence we have
$S_H(N)\subseteq E$. This means that $\varphi$ is well-defined. Using the fact that $q$ is a graded quasi-Hopf algebra epimorphism, one can prove directly that $\varphi$ is a quasi-coquasi-Hopf pairing. This result generalizes \cite[Theorem 3.1]{Che07}. It is worth noting that, if we take $V$ to be the alternative coinvariant $M^{\overline{co A}}$ (see \cite[Remark 3.4]{BT06} for the definition; see also \cite[Proposition 2.1]{Bul20}), which carries a left-left Yetter-Drinfeld module structure over $A$, then the quasi-Hopf algebra $\mathcal{B}(V)\#A$ arises as a special case of $Q$. For the definition of a left-left Yetter-Drinfeld module over a quasi-Hopf algebra, see \cite[Definition 5.1]{Sch02}; for bosonization of quasi-Hopf algebras, see \cite{BN02}.
\end{remark}

\section*{Acknowledgement}
AI was used solely for language polishing and spell-checking. The authors take full responsibility for the content of the paper. The first author was supported by the China Postdoctoral Science Foundation [Grant number 2026M793410] and the Postdoctoral Fellowship Program of China Postdoctoral Science Foundation [Grant number GZC20261673].


\begin{thebibliography}{99}
\bibitem{AC92}Altsch\"uler, D., and Coste, A. ``Quasi-quantum groups, knots, three-manifolds, and topological field theory." \textit{Commun. Math. Phys.} 150, no. 1 (1992): 83-107.
\bibitem{AC13}Andruskiewitsch, N., and Cuadra, J. ``On the structure of (co-Frobenius) Hopf algebras." \textit{J. Noncommut. Geom.} 7, no. 1 (2013): 83-104.
\bibitem{ACE15}Andruskiewitsch, N., Cuadra, J., and Etingof, P. ``On two finiteness conditions for Hopf algebras with nonzero integral." \textit{Ann. Sc. Norm. Super. Pisa Cl. Sci.} (5) 14, no. 2 (2015): 401-440.
\bibitem{AS10}Andruskiewitsch, N., and Schneider, H.-J. ``On the classification of finite-dimensional pointed Hopf algebras." \textit{Ann. of Math.} (2) 171, no. 1 (2010): 375-417.
\bibitem{Ang26}Angiono, I. ``Pointed Hopf algebras revisited, with a view from tensor categories." In \textit{International Congress of Mathematicians 2026}, Vol. 3, 153-171. 2026.
\bibitem{AES16}Ardizzoni, A., El Kaoutit, L., and Saracco, P. ``Functorial constructions for non-associative algebras with applications to quasi-bialgebras." \textit{J. Algebra} 449 (2016): 460-496.
\bibitem{AMS07}Ardizzoni, A., Menini, C., and \c{S}tefan, D. ``Hochschild cohomology and ``smoothness'' in monoidal categories." \textit{J. Pure Appl. Algebra} 208 (2007): 297-330.
\bibitem{AP12}Ardizzoni, A., and Pavarin, A. ``Preantipodes for dual quasi-bialgebras." \textit{Israel J. Math.} 192, no. 1 (2012): 281-295.
\bibitem{AP13}Ardizzoni, A., and Pavarin, A. ``Bosonization for dual quasi-bialgebras and preantipode." \textit{J. Algebra} 390 (2013): 126-159.
\bibitem{AM19}Aziz, R., and Majid, S. ``Co-double bosonisation and dual bases of $c_q[SL_2]$ and $c_q[SL_3]$." \textit{J. Algebra} 518 (2019): 75-118.
\bibitem{Bal09}Balan, A. ``Yetter-Drinfeld modules and Galois extensions over coquasi-Hopf algebras." \textit{Politehn. Univ. Bucharest Sci. Bull. Ser. A Appl. Math. Phys.} 71, no. 3 (2009): 43-60.
\bibitem{BB13}Bazlov, Y., and Berenstein, A. ``Cocycle twists and extensions of braided doubles." In \textit{Noncommutative Birational Geometry, Representations and Combinatorics}, 19- 70. Contemporary Mathematics 592. Providence, RI: American Mathematical Society, 2013.
\bibitem{Bul20}Bulacu, D. ``Quasi-quantum groups obtained from tensor braided Hopf algebras." \textit{J. Algebraic Combin.} 52, no. 3 (2020): 405-453.
\bibitem{BC03}Bulacu, D., and Caenepeel, S. ``Integrals for (dual) quasi-Hopf algebras. Applications." \textit{J. Algebra} 266 (2003): 552-583.
\bibitem{BCPV19}Bulacu, D., Caenepeel, S., Panaite, F., and Van Oystaeyen, F. \textit{Quasi-Hopf Algebras: A Categorical Approach}. Cambridge: Cambridge University Press, 2019.
\bibitem{BN02}Bulacu, D., and Nauwelaerts, E. ``Radford's biproduct for quasi-Hopf algebras and bosonization." \textit{J. Pure Appl. Algebra} 174, no. 1 (2002): 1-42
\bibitem{BT06}Bulacu, D., and Torrecillas, B. ``Two-sided two-cosided Hopf modules and Yetter-Drinfeld modules for quasi-Hopf algebras." \textit{Appl. Categ. Struct.} 14, no.5-6 (2006): 503-530.
\bibitem{Che07}Chen, X.-W. ``Duality between quantum symmetric algebras." \textit{Lett. Math. Phys.} 79, no. 1 (2007): 39-50.
\bibitem{CR02}Cibils, C., and Rosso, M. ``Hopf quivers." \textit{J. Algebra} 254, no. 2 (2002): 241-251.
\bibitem{CGR20}Creutzig, T., Gainutdinov, A.M., and Runkel, I. ``A quasi-Hopf algebra for the triplet vertex operator algebra." \textit{Commun. Contemp. Math.} 22, no. 3 (2020): 1950024.
\bibitem{Cli19}Cline, Z. ``On actions of Drinfel'd doubles on finite dimensional algebras." \textit{J. Pure Appl. Algebra} 223, no. 8 (2019): 3635-3664.
\bibitem{DPR90}Dijkgraaf, R., Pasquier, V., and Roche, P. ``Quasi Hopf algebras, group cohomology and orbifold models." \textit{Nucl. Phys. B, Proc. Suppl.} 18B (1990): 60-72.
\bibitem{DNR01}D\u{a}sc\u{a}lescu, S., N\u{a}st\u{a}sescu, C., and Raianu, S. \textit{Hopf Algebras. An Introduction}. New York: Marcel Dekker, Inc., 2001.
\bibitem{Dri87}Drinfeld, V.G. ``Quantum groups." In \textit{Proceedings of the International Congress of Mathematicians}, Vol. 1, 2, 798-820. Providence, RI: Amer. Math. Soc., 1987.
\bibitem{Dri90}Drinfeld, V.G. ``Quasi-Hopf algebras." \textit{Algebra Anal.} 1, no. 6 (1989): 114-148; translation in \textit{Leningr. Math. J.} 1, no. 6 (1990): 1419-1457.
\bibitem{DNR25}Dong, C., Ng, S.-H., and Ren, L. ``Orbifolds and minimal modular extensions." \textit{Adv. Math.} 462 (2025): 110103.
\bibitem{EG03}Etingof, P., and Gelaki, S. ``The classification of finite-dimensional triangular Hopf algebras over an algebraically closed field of characteristic $0$." \textit{Mosc. Math. J.} 3, no. 1 (2003): 37-43, 258.
\bibitem{EG05}Etingof, P., and Gelaki, S. ``On radically graded finite-dimensional quasi-Hopf algebras." \textit{Mosc. Math. J.} 5, no. 2 (2005): 371-378.
\bibitem{EGNO15}Etingof, P., Gelaki, S., Nikshych, D., and Ostrik, V. \textit{Tensor Categories}. Mathematical Surveys and Monographs 205. Providence, RI: American Mathematical Society, 2015.
\bibitem{EG21}Etingof, P., and Gelaki, S. ``Finite symmetric integral tensor categories with the Chevalley property, with an appendix by Kevin Coulembier and Pavel Etingof." \textit{Int. Math. Res. Not.} no. 12 (2021): 9083-9121.
\bibitem{EO04}Etingof, P., and Ostrik, V. ``Finite tensor categories." \textit{Mosc. Math. J.} 4, no. 3 (2004): 627-654.
\bibitem{EP26}Etingof, P., and Penneys, D. ``Rigidity of non-negligible objects of moderate growth in braided categories." \textit{Forum Math. Pi} 14 (2026): Paper No. e7, 18 pp.
\bibitem{FJ15}Fang, X., and Jian, R.-Q. ``Cofree Hopf algebras on Hopf bimodule algebras." \textit{J. Pure Appl. Algebra} 219, no. 9 (2015): 3913-3930.
\bibitem{FR17}Fang, X., and Rosso, M. ``Multi-brace cotensor Hopf algebras and quantum groups." \textit{Math. Z.} 286 (2017): 657-678.
\bibitem{FGR22}Farsad, V., Gainutdinov, A.M., and Runkel, I. ``The symplectic fermion ribbon quasi-Hopf algebra and the $SL(2, \mathbb{Z})$-action on its centre." \textit{Adv. Math.} 400 (2022): Paper No. 108247, 87 pp.
\bibitem{GS98}Green, E.L., and Solberg, {\O}. ``Basic Hopf algebras and quantum groups." \textit{Math. Z.} 229 (1998): 45-76.
\bibitem{Hua09}Huang, H.-L. ``Quiver approaches to quasi-Hopf algebras." \textit{J. Math. Phys.} 50, no. 4 (2009): Paper No. 043501, 9 pp.
\bibitem{HLYY20}Huang, H.-L., Liu, G., Yang, Y., and Ye, Y. ``Finite quasi-quantum groups of diagonal type." \textit{J. Reine Angew. Math.} 759 (2020): 201-243.
\bibitem{HLYY26}Huang, H.-L., Liu, G., Yang, Y., and Ye, Y. ``On the classification of finite quasi-quantum groups over abelian groups." \textit{Adv. Math.} 486 (2026): Paper No. 110740, 33 pp.
\bibitem{HLY11}Huang, H.-L., Liu, G., and Ye, Y. ``Quivers, quasi-quantum groups and finite tensor categories." \textit{Comm. Math. Phys.} 303, no. 3 (2011): 595-612.
\bibitem{Kas95}Kassel, C. \textit{Quantum Groups}. Graduate Texts in Mathematics 155. New York: Springer-Verlag, 1995.
\bibitem{Liu14}Liu, G. ``The quasi-Hopf analogue of $u_q(\mathfrak{sl}_2)$." \textit{Math. Res. Lett.} 21, no. 3 (2014): 585-603.
\bibitem{LVZ17}Liu, G., Van Oystaeyen, F., and Zhang, Y. ``Quasi-Frobenius-Lusztig kernels for simple Lie algebras." \textit{Trans. Amer. Math. Soc.} 369, no. 3 (2017): 2049-2086.
\bibitem{Lus90}Lusztig, G. ``Finite dimensional Hopf algebras arising from quantized universal enveloping algebras." \textit{J. Amer. Math. Soc.} 3 (1990): 257-296.
\bibitem{Maj95}Majid, S. \textit{Foundations of Quantum Group Theory}. Cambridge: Cambridge University Press, 1995.
\bibitem{Mon93}Montgomery, S. \textit{Hopf Algebras and Their Actions on Rings}. CBMS Regional Conference Series in Mathematics 82. Providence, RI: American Mathematical Society, 1993.
\bibitem{Nic78}Nichols, W. ``Bialgebras of type one." \textit{Comm. Algebra} 6, no. 15 (1978): 1521-1552.
\bibitem{Rad75}Radford, D.E. ``On a coradical of a finite-dimensional Hopf algebra." \textit{Proc. Amer. Math. Soc.} 53, no. 1 (1975): 9-15.
\bibitem{Rad77}Radford, D.E. ``Operators on Hopf algebras." \textit{Amer. J. Math.} 99, no. 1 (1977): 139-158.
\bibitem{Ros98}Rosso, M. ``Quantum groups and quantum shuffles." \textit{Invent. Math.} 133, no. 2 (1998): 399-416.
\bibitem{Run14}Runkel, I. ``A braided monoidal category for free super-bosons." \textit{J. Math. Phys.} 55 (2014): Paper No. 041702, 59 pp.
\bibitem{Sch02}Schauenburg, P. ``Hopf modules and the double of a quasi-Hopf algebra." \textit{Trans. Amer. Math. Soc.} 354, no. 8 (2002): 3349-3378.
\bibitem{Tak77}Takeuchi, M. ``Morita theorems for categories of comodules." \textit{J. Fac. Sci. Univ. Tokyo Sect. IA Math.} 24, no. 3 (1977): 629-644.
\bibitem{VZ04}Van Oystaeyen, F., and Zhang, P. ``Quiver Hopf algebras." \textit{J. Algebra} 280 (2004): 577-589.
\bibitem{SY26}Shimizu, K., and Yadav, H. ``Ribbon categories from ind-exact algebras: simple current case." arXiv:2603.28215.
\bibitem{Wa03}Wakui, M. ``The coribbon structures of some finite dimensional braided Hopf algebras generated by 2$\times$2-matrix coalgebras." In \textit{Noncommutative Geometry and Quantum Groups (Warsaw, 2001)}, 333- 344. Warsaw: Polish Acad. Sci. Inst. Math., 2003.
\bibitem{Yu26}Yu, J. ``A quiver approach to quasi-quantum groups with the Chevalley property." arXiv:2606.10946.
\end{thebibliography}
\end{document}